\documentclass[a4paper,12pt]{scrartcl}
\usepackage[T1]{fontenc}
\usepackage[utf8]{inputenc}
\usepackage{mathtools}
\usepackage{enumitem}
\usepackage{amsthm,amssymb}
\usepackage[cal=boondoxo]{mathalfa}
\usepackage{appendix}
\usepackage{stmaryrd}
\usepackage{xspace}
\usepackage{mathabx}
\usepackage[a4paper]{geometry}
\usepackage{csquotes}
\usepackage{microtype}
\usepackage{cite}
\usepackage{hyperxmp}
\usepackage[unicode,hyperfootnotes=false,pdftex]{hyperref}


\hypersetup{
    pdftitle = {The non-linear sewing lemma III: Stability and generic properties},
    pdfauthor = {Antoine Brault; Antoine Lejay},
    pdfdate = {2020-04-20},
}

\setlength{\parskip}{\medskipamount}
\setlength{\parindent}{0pt}

\newtheorem{proposition}{Proposition}
\newtheorem{theorem}{Theorem}

\newtheorem{lemma}{Lemma}
\newtheorem{corollary}{Corollary}
\theoremstyle{definition}
\newtheorem{definition}{Definition}
\newtheorem{notation}{Notation}

\newtheorem{hypothesis}{Hypothesis}
\newtheorem{ghypothesis}{Global Hypothesis}

\theoremstyle{remark}
\newtheorem{remark}{Remark}
\newtheorem{example}{Example}

\DeclarePairedDelimiterXPP{\normlip}[1]{}{\|}{\|}{_{\mathrm{Lip}}}{#1}
\DeclarePairedDelimiterXPP{\normsup}[1]{}{\|}{\|}{_{\infty}}{#1}
\DeclarePairedDelimiterXPP{\normL}[1]{}{\|}{\|}{_{\Lambda}}{#1}
\DeclarePairedDelimiterXPP{\normE}[1]{}{\|}{\|}{_{\cE}}{#1}
\DeclarePairedDelimiterXPP{\normhold}[2]{}{\|}{\|}{_{#1}}{#2}
\DeclarePairedDelimiterXPP{\normO}[1]{}{\|}{\|}{_{\mathcal{O}}}{#1}
\DeclarePairedDelimiterXPP{\generalnorm}[2]{}{\|}{\|}{_{#1}}{#2}
\DeclarePairedDelimiterXPP{\normp}[1]{}{\|}{\|}{_{p}}{#1}
\DeclarePairedDelimiterXPP{\normf}[2]{}{\|}{\|}{_{#1}}{#2}

\DeclarePairedDelimiter{\abs}{|}{|}
\DeclarePairedDelimiter\Paren()
\DeclarePairedDelimiter{\floor}{\lfloor}{\rfloor}

\newcommand{\fM}{\mathfrak{M}}
\newcommand{\fV}{\mathfrak{V}}
\newcommand{\id}{\mathfrak{i}}
\newcommand{\cF}{\mathcal{F}}
\newcommand{\cE}{\mathcal{E}}
\newcommand{\cM}{\mathcal{M}}
\newcommand{\cQ}{\mathcal{Q}}
\newcommand{\cR}{\mathcal{R}}
\newcommand{\cA}{\mathcal{A}}
\newcommand{\cP}{\mathcal{P}}
\newcommand{\cC}{\mathcal{C}}
\newcommand{\cG}{\mathcal{G}}
\newcommand{\cCb}{\mathcal{C}_{\mathrm{b}}}
\newcommand{\rTT}{\mathbb{T}_+}
\newcommand{\RR}{\mathbb{R}}
\newcommand{\NN}{\mathbb{N}}
\newcommand{\TT}{\mathbb{T}}
\newcommand{\PP}{\mathbb{P}}
\newcommand{\EE}{\mathbb{E}}
\newcommand{\uU}{\mathrm{U}}
\newcommand{\uV}{\mathrm{V}}
\newcommand{\uL}{\mathrm{L}}
\newcommand{\dd}{\mathrm{d}}
\newcommand{\bx}{\mathbf{x}}
\newcommand{\vd}{\,\mathrm{d}}

\newcommand{\dD}{\mathrm{D}}
\newcommand{\eqdef}{\mathbin{\vcentcolon=}}

\newcommand\given{\nonscript\:\delimsize\vert\nonscript\:\mathopen{}} 
\newcommand\SetSymbol[1][]{\nonscript\:#1\vert\nonscript\:\mathopen{}\allowbreak}
\DeclarePairedDelimiterX\Set[1]\{\}{%
  \renewcommand\given{\SetSymbol[\delimsize]}#1}

\DeclareMathOperator{\mesh}{Mesh}
\DeclareMathOperator{\Lip}{Lip}
\DeclareMathOperator{\osc}{osc}
\DeclareMathOperator{\Id}{Id}
\newcommand{\rpi}{\pi_+}

\newcommand{\cCloc}{\mathcal{C}_{\mathrm{loc}}}

\newcommand{\diff}{\mathfrak{d}}

\DeclarePairedDelimiterXPP{\normlipvarpi}[1]{}{\|}{\|}{_{\mathrm{Lip}\div\varpi}}{#1}
\DeclarePairedDelimiterXPP{\normsupvarpi}[1]{}{\|}{\|}{_{\infty\div\varpi}}{#1}

\newcommand{\cSA}{\mathcal{SA}}
\newcommand{\cN}{\mathcal{N}}
\newcommand{\cU}{\mathcal{U}}
\newcommand{\cO}{\mathcal{O}}
 \newcommand{\bB}{\mathbf{B}}

\begin{document}

\title{The non-linear sewing lemma III: Stability and generic properties}
\author{Antoine Brault\thanks{Université Paris Descartes, MAP5 (CNRS UMR 8145), 45 rue desSaints-Pères, 75270 Paris cedex 06, France,} \thanks{Center for Mathematical Modeling (CNRS UMI 2807), University of Chile,      
\texttt{abrault@dim.uchile.cl}} \and Antoine Lejay\thanks{Université de Lorraine, CNRS, Inria, IECL, F-54000 Nancy, France, \texttt{antoine.lejay@univ-lorraine.fr}}}

\maketitle

\begin{abstract}
    Solutions of Rough Differential Equations (RDE) may be defined as paths
    whose increments are close to an approximation of the associated flow. They
    are constructed through a discrete scheme using a non-linear sewing lemma.
    In this article, we show that such solutions also solve a fixed point
    problem by exhibiting a suitable functional. Convergence then follows from
    consistency and stability, two notions that are adapted to our framework.
    In addition, we show that uniqueness and convergence of discrete
    approximations is a generic property, meaning that it holds excepted for a
    set of vector fields and starting points which is of Baire first category.
    At last, we show that Brownian flows are almost surely unique solutions to
    RDE associated to Lipschitz flows. The later property yields almost
    sure convergence of Milstein schemes.
\end{abstract}

\textbf{Keywords: } Rough differential equations; Lipschitz flows; Rough paths;
Brownian flows

\section{Introduction}

Rough Differential Equations (RDE) are natural extensions of Ordinary Differential Equations (ODE)
to equations driven by rough signals~\cite{lyons98a,lyons02b,friz,friz14a}. 
More precisely, RDE are equations of type 
\begin{equation}
    \label{eq:rde:intro}
    y_{t,s}=a+\int_s^t f(y_{r,s})\vd \bx_{r},\ t\in[s,T],
\end{equation}
where $\bx$ is a $p$-\emph{rough path} lying above a continuous path $x$ of finite $p$-variation
living in a Banach space~$\uU$. The order $\floor{p}$ determines the tensor
space in which $\bx$ lives in and the iterated integrals of $x$ to use.  The
minimal regularity of the vector field $f$ also depends on $p$.  The
solution~$y$ is itself of finite $p$-variation living in a finite or infinite
Banach space~$\uV$.  One of the main feature of the theory of rough paths is
the continuity of the \emph{Itô map} $\bx\mapsto y$.  When $x$ is
differentiable, \eqref{eq:rde:intro} is understood as the ODE $y_t=a+\int_0^t
f(y_s)\dot{x}_s\vd s$. As for ODE, we recover Cauchy-Peano and Cauchy-Lipschitz
(or Picard -Lindelöf) type results, where existence follows from Schauder fixed
point theorem or from Picard fixed point theorem under stronger regularity
conditions on the vector field $f$. The later case implies uniqueness of
solutions as well as extra properties.

Existence of solutions to \eqref{eq:rde:intro} were first proved by T.~Lyons using a fixed point theorem~\cite{lyons98a}. 
In \cite{davie05a}, A.M. Davie proposed an alternative approach based on discrete approximations so that solutions
are constructed as limit of numerical schemes based on Taylor developments. P.~Friz and N.~Victoir \cite{friz2008,friz14a}
have proposed another approximation based on sub-Riemannian geodesics, yielding again the convergence
of numerical schemes. More recently, I.~Bailleul have developed a framework in which the central tools 
are flows associated to \eqref{eq:rde:intro} and their approximations \cite{bailleul12a,bailleul13b,bailleul17a}. 
By flows, we mean the family of solutions $a\in\uV\mapsto y_{t,s}(a)$ when the later satisfies 
$y_{t,s}\circ y_{s,r}=y_{t,r}$ for any $r\leq s\leq t$. 
The approximation of the flow proposed by I.~Bailleul, A.~M. Davie and P.~Friz-N.~Victoir are all different, 
although giving rise to the same flow.

In \cite{brault1,brault2}, we have proposed an \textquote{agnostic} framework for dealing directly
with flows without referring to a particular approximation. Only a broad condition  is
given on the approximations of the flows, called \emph{almost flows}, to obtain 
a \emph{non-linear sewing lemma}, a natural extension of the additive and multiplicative sewing lemmas~\cite{lyons98a,feyel}.
When the underlying space $\uV$ is finite dimensional,
a measurable flow may exist even when several solutions to \eqref{eq:rde:intro}
are known to exist \cite{brault1}. When the flow is Lipschitz, it is uniquely associated to any almost flow 
in the same quotient class called a \emph{galaxy}, a notion which reflects the 
\textquote{closeness} between the two objects. 
In \cite{brault2}, we have studied the properties of \emph{stable almost flows},
a condition ensuring that compositions of the almost flows over small times remains Lipschitz, uniformly
in the choice of the composition. The limiting flow is thus Lipschitz. 
We have also studied the relationship between stable almost flows
and solutions to~\eqref{eq:rde:intro}, which are unique in this case.

The goal of this article is threefold: 
\begin{itemize}[leftmargin=1em]
    \item We extend the notion of almost flow. 
We also continue our study of \emph{D-solutions}, that are paths $z$ solutions to \eqref{eq:rde:intro} satisfying
\begin{equation}
    \label{eq:intro:1}
    \abs{z_t-\phi_{t,s}(z_s)}\leq C\abs{t-s}^\theta,\ \forall s\leq t\text{ with } \theta>1,
\end{equation}
for an almost flow $\phi$. This notion of solution was introduced by A.~M.~Davie in \cite{davie05a}.
Here, we focus on continuity and approximations of D-solutions when $\phi$ is a stable almost
flow. Besides, we construct a functional 
$\Phi$ such that any D-solution solves the fixed point problem $z=\Phi(z)$. From this, 
we develop in our context the classical notions of \emph{consistency} and \emph{stability} \cite{lax,chartres}
which we relate to convergence.

More precisely, we construct a functional $\Phi$ such that any D-solution solves
the fixed point problem $z=\Phi(z)$. At the difference with the classical setting
for fixed point, $\Phi$ is defined \emph{only} on D-solutions. 

For a partition $\pi$, we also define a functional $\Phi^\pi$ such that
any solutions to $z^\pi=\Phi^\pi(z^\pi)$ are discrete D-solutions,
that is $z^\pi$ solves \eqref{eq:intro:1}
for times $s,t$ in the partitions. Such discrete D-solutions are constructed
explicitly through the numerical scheme $z^\pi_{t_{k+1}}=\phi_{t_{k+1},t_k}(z^\pi_{t_k})$
when $\pi=\Set{t_k}_{k=0}^n$.

By \emph{consistency}, we mean that any D-solution solves $z=\Phi^\pi(z)+\epsilon^\pi$
for a perturbative term $\epsilon^\pi$ that converges to $0$ when the mesh of the partition
converges to $0$. By \emph{stability}, we means that roughly $(\mathrm{Id}-\Phi^\pi)$ is invertible
with an inverse uniformly bounded with respect to $\pi$.   
Seen as a principle \cite{chartres}, 
the Lax equivalence theorem \cite{lax} is valid in many situations, including ours. It provides
a simple way to assert convergence through the study of consistency and stability.
We then show that the notion of \emph{stable almost flow}, introduced in \cite{brault2}, 
leads to the stability of $\Phi^\pi$.
The various estimates obtained in this part are the keys to fulfill our second objective.

\item We prove \emph{generic properties} associated to RDEs. When solved in an infinite
dimensional space, solutions to ODE are not necessarily unique \cite{dieudonne}, nor the Euler
scheme converges. Nevertheless, 
following some results due to W.~Orlicz \cite{orlicz} and developed later by several
authors, the set of vector fields and starting point points for which non-uniqueness/non-convergence
of the Euler scheme hold are of Baire first category. The key point is that discrete
approximations are uniformly approximated by discrete approximations in which vector fields is Lipschitz continuous.
We develop a similar approach for solutions to Young (when the driving path is of $p$-variation
with $p<2$) and rough (when the driving path is a rough path of finite $p$-variation with $2\leq p<3$).
Such results exploit properties developed in the first part of this article regarding stable almost flows.

\item We apply these results to Brownian flows to pursue the study of~\cite{davie05a}
by mixing them with considerations from H.~Kunita~\cite{kunita_saint_flour}.
In particular, we show that for any vector field, the solution to the Itô SDE $X_t=a+\int_0^t \sigma(X_s)\vd B_s$
for $\sigma\in\cCb^{1+\gamma}$, $\gamma>0$, is also the unique D-solution to the corresponding RDE and is then associated
to a Lipschitz flow. The notable points are that $\sigma$ is assumed 
to be less regular than for proving uniqueness through a Banach fixed point theorem;
and that properties of stable almost flows are not used here.
Besides, A.M. Davie proved that for almost 
any choice of a Brownian rough paths, with suitable conditions on the underlying space,
there exists a vector field for which several D-solutions exist. To summarize, 
there exist Lipschitz flows which are not related to stable almost flows. This question
was left open in \cite{brault2}.
\end{itemize}

\noindent\textbf{Outline. } In Section~\ref{sec:def}, we introduce objects and notations that we
use through all the article. In Section~\ref{sec:D-sol}, we define D-solutions, and show
that they are solutions to a fixed point problem involving suitable functionals whose consistency, 
stability and convergence is studied. Generic properties are studied in Section~\ref{sec:generic}. 
In Section~\ref{sec:br-flow}, we study Brownian flows and show that it is fitted for our frameworks.
We end with an appendix with general considerations on unbounded flows, boundedness of solutions
as well as uniqueness of D-solutions.


\section{Definitions and notations}
\label{sec:def}

We introduce some notations and global hypotheses (in force throughout the whole article)
which follows (partly) the ones of \cite{brault1,brault2}.

\begin{notation}[Simplex] 
    For $C$ an interval of $\RR$, we set $C^2_+\eqdef\Set{(s,t)\in C^2\given s\leq t}$
    and $C^3_+\eqdef\Set{(r,s,t)\in C^3\given r\leq s\leq t}$.
\end{notation}

\begin{notation} We use 
    \begin{itemize}[leftmargin=1em]
	\item Two non-decreasing functions $\delta$ and $\varpi$ from $\RR_+$ to $\RR_+$ with $\delta(0)=\varpi(0)=0$. 
	    We write indifferently $\delta_t$ or $\delta(t)$, $t\geq 0$, whenever it is convenient.
	\item A \emph{time horizon} $T>0$ and $\TT:=[0,T]$.
	\item A map $\omega:\rTT^2\to\RR_+$ (a \emph{control}) which is super-additive,
	    ($\omega_{r,s}+\omega_{s,t}\leq \omega_{r,t}$ for any $(r,s,t)\in\rTT^3$) and  
continuous close to its diagonal and such that $\omega_{s,s}=0$ for all $s\in\TT$.
    \end{itemize}
\end{notation}

\begin{ghypothesis}[Controls over growth and remainder]
    \label{hyp:4}
    For some $\varkappa\in(0,1)$,  $2\varpi(x/2)\leq \varkappa\varpi(x)$ for any $x\geq 0$.
\end{ghypothesis}

\begin{remark}
    \label{rem:3} 
    Since $\varkappa<1$, $\varpi(x)/x$ converges to $0$
    as $x$ converges to $0$.
\end{remark}

\begin{ghypothesis}[Time horizon]
    \label{hyp:time}
The time horizon $T$ satisfies 
\begin{equation}
    \label{eq:time}
    \varkappa+2\delta_T<1.
\end{equation}
\end{ghypothesis}

Let $\uV$ be a Banach space with the norm $\abs{\cdot}$ and $\id$ be the identity map from $\uV$ to~$\uV$.

\begin{notation}[Modulus of continuity, Lipschitz and Hölder norm] The \emph{modulus of continuity} 
    of a function $f:\uV\to\uV$ is 
    \begin{equation*}
	\osc(f,\delta)\eqdef \sup_{\substack{a,b\in\uV\\\abs{a-b}\leq \delta}}\abs{f(a)-f(b)}
	\text{ for any $\delta>0$.}
    \end{equation*}
    Its $\alpha$-Hölder  semi-norm ($0<\alpha\leq 1$) and its Lipschitz semi-norm
    are defined as 
    \begin{equation*}
	\normhold{\alpha}{f}\eqdef \sup_{a\neq b}\frac{\abs{f(a)-f(b)}}{\abs{a-b}^\alpha}
	\text{ and }
	\normlip{f}\eqdef \sup_{a\neq b}\frac{\abs{f(a)-f(b)}}{\abs{a-b}}
    \end{equation*}
    when these quantities are finite. Moreover, if $f$ is bounded, we denote
    $\normsup{f}:=\sup_{a}\abs{f(a)}$.
\end{notation}

We consider several families $\chi$ of objects indiced by $\rTT^2$ (almost flows, control, ...). 
    When these objects are functions from $\uV$ to $\uV$, 
    we write the pair $(r,t)\in\rTT^2$ in reverse order, that is $\chi_{t,r}$, 
    as the composition of functions is usually written from right to left. 
    Other objects are written with indices in order, that is $\chi_{r,t}$.

\begin{definition}[Functions of class $\cO$]
\label{not:6}
A function $\chi$ from $\rTT^2$ to $\cC(\uV,\uV)$ is said to be \emph{of class $\cO$} if 
there exists a constant $C\geq 0$ such that
\begin{equation}
    \label{eq:defO}
    \osc(\chi_{t,s},L\varpi(\omega_{r,s}))\leq C\delta_T (1+L)\varpi(\omega_{r,t}),
    \ \forall (r,s,t)\in\rTT^3,\ \forall L\geq 0.
    \end{equation}
The smallest constant $C$ such that \eqref{eq:defO} holds is denoted by $\normO{\chi}$.
\end{definition}
\begin{definition}[Semi-norm on functions of class $\cO$]
    We define
    \begin{equation*}
	\cO(\uV,\uV)\eqdef \Set{\chi:\rTT^2\to\cC(\uV,\uV)\given \chi\text{ is of class }\cO }, 
    \end{equation*}
    which is a vector space with a semi-norm $\normO{\cdot}$.
\end{definition}
\begin{example} 
    \label{ex:1}
    Let $\chi_{t,r}$ be Lipschitz with $\normlip{\chi_{t,r}}\leq K$ 
    for any $(r,t)\in\rTT^2$.
    Then $\chi\in\cO(\uV,\uV)$ with $\normO{\chi}=K/\delta_T$.
\end{example}
\begin{example}
    \label{ex:2}
    Let $x:\TT\to\uU$ be $\alpha$-Hölder continuous and $f:\uV\to L(\uU,\uV)$
    be $\gamma$-Hölder continuous with $\theta\eqdef \alpha(1+\gamma)>1$.
    Let $\varpi(x)\eqdef x^\theta$ and $\omega_{s,t}=t-s$.
    For $a\in\uV$ and $(s,t)\in\rTT^2$, 
    set
    $\chi_{t,s}(a)\eqdef f(a)x_{s,t}$, where $x_{s,t}\eqdef x_t-x_s$. Then 
    \begin{equation}
	\label{eq:51}
	\abs{\chi_{t,s}(a)-\chi_{t,s}(b)}
	\leq \normhold{\gamma}{f}\cdot\normhold{\alpha}{x}
	\abs{a-b}^\gamma(t-s)^\alpha,\ \forall (s,t)\in\rTT^2.
    \end{equation}
    With $\delta_T\eqdef \normhold{\gamma}{f}\cdot\normhold{\alpha}{x}T^{\alpha\gamma^2}$, 
    it follows from \eqref{eq:51} that $\chi$ is of class $\cO$ with 
    $\normO{\chi}\leq\gamma^\gamma/(1-\gamma)^{1-\gamma}$.
\end{example}
\begin{notation}
    \label{not:1}
    Let $\cF[\delta]$ be the class of families $\phi\eqdef\Set{\phi_{t,s}}_{(s,t)\in\TT_+^2}$
    of functions from~$\uV$ to~$\uV$ which satisfy
    \begin{gather}
	\label{eq:def:1}
	\phi_{t,s}=\id+\widehat{\phi}_{t,s}\text{ with }\widehat{\phi}\in\cO(\uV,\uV)
	\text{ and }\normO{\widehat{\phi}}\leq 1,\\
	\label{eq:def:3}
	\normsup{\widehat{\phi}_{t,s}}\leq \delta_{t-s}, \ \forall (s,t)\in\rTT^2.
    \end{gather}
   The set $\cF\eqdef\bigcup_{\delta}\cF[\delta]$, union over all the
    functions $\delta$ 
    as in Global Hypothesis~\ref{hyp:4} (which is stable under addition), is equipped with the distance
\begin{equation}
	\label{eq:def:5}
	d_\infty(\phi,\psi)\eqdef\sup_{(s,t)\in\rTT^2} \sup_{a\in\uV}   \abs{\phi_{t,s}(a)-\psi_{t,s}(a)}.
    \end{equation}
\end{notation}

\begin{remark}
    In Appendix~\ref{sec:aflinear}, we justify that assuming that $\widehat{\phi}$ is bounded
    unlike in~\cite{brault1} can be done without losing generality.
\end{remark}

\begin{definition}[Galaxy]
    \label{def:gal:1}
    Let $\phi,\psi\in\cF$. 
    We say that $\phi$ and $\psi$ are in the same \emph{galaxy}
    if there exists $K\geq 0$ such that 
    \begin{equation}
	\label{eq:gal:1}
	\normsup{\phi_{t,s}-\psi_{t,s}}\leq K\varpi(\omega_{s,t}),\ \forall(s,t)\in\rTT^2. 
    \end{equation}
\end{definition}

\begin{definition}[Almost flow]
    \label{def:almost-flow}
    We fix $M\geq 0$. 
    Let $\cA[\delta,M]$ be the set of $\phi\in\cF[\delta]$ such that 
    \begin{gather}
	\label{eq:def:2}
	\normsup{\diff\phi_{t,s,r}}\leq M\varpi(\omega_{r,t}),
	\ \forall (r,s,t)\in\TT^3_+\\
	\text{ with }
	\label{eq:phitsr}
	\diff\phi_{t,s,r}\eqdef \phi_{t,s}\circ\phi_{s,r}-\phi_{t,r}.
    \end{gather}
    We write $\cA\eqdef\bigcup_{\delta,M}\cA[\delta,M]$. 
    An element of $\cA$ is called an \emph{almost flow}.
\end{definition}

\begin{remark} 
    Combining Examples~\ref{ex:1} and \ref{ex:2}, it is easily
    seen that this definition generalizes the one of \cite{brault1}.
\end{remark}

\begin{definition}[Flow]
    A flow is a family $\psi:\rTT^2\times\uV\to\uV$ which satisfies
    $\diff\psi_{t,s,r}(a)=0$ for any $a\in\uV$ and any $(r,s,t)\in\rTT^3$.
\end{definition}

\begin{remark}
    For $i=1,2,3$, let us define $\fM_i$ be the maps from $\rTT^i\times \uV$ to $\uV$. 
    The operator $\diff$ transforms maps in $\fM_2$ to maps in $\fM_3$. 
    It is a non-linear generalization of the sewing operator introduced by M.~Gubinelli in \cite{gub04}. 
    We use it as a shorthand. Yet it has also the following meaning. 
    For a family of invertible maps $\alpha$ in $\fM_1$, we set 
    $\diff\alpha_{t,s}=\alpha_t\circ\alpha_s^{-1}$, $(s,t)\in\rTT^2$ 
    so that $\diff\alpha\in\fM_2$. Conversely, for an invertible 
    flow $\psi\in\fV$, we set $\alpha_t\in\psi_{t,0}$, $t\in\TT$ 
    so that $\diff\alpha=\psi$.
    Hence, invertible flows belong both to the range of $\diff:\fM_1\to\fM_2$ and
    the kernel of $\diff:\fM_3\to\fM_2$. When $\diff\phi$ is \textquote{close}
    to $0$ for an almost flow $\phi$, a \emph{non-linear sewing map} projects $\phi$
    to a flow $\psi$, which thus satisfies $\diff\psi=0$.
\end{remark}

\begin{notation} 
    \label{not:partition}
    The elements of a partition $\pi=\Set{t_i}_{i=0}^{n}$
    of $\TT$ are written either as the points $t_i$ or as the
    close intervals $[t_i,t_{i+1}]$ of successive points. 
    
    For a family $\Set{y_t}_{t\in\TT}$,
    we write $y_i\eqdef y_{t_i}$ when no ambiguity arises. We use the same convention for 
    functions over $\rTT^2$ or $\rTT^3$.

    For a family 
    $\Set{f_{s,t}}_{(s,t)\in\rTT^2}$, 
    we write either $\sum_{i=0}^{n-1} f_{i,i+1}$ or 
    $\sum_{[u,v]\in\pi} f_{u,v}$ instead of $\sum_{i=0}^{n-1} f_{t_i,t_{i+1}}$
    when there is no ambiguity.
\end{notation}
\begin{definition}[Solution in the sense of Davie, or D-solution]
    Let $n\geq 1$.
    For an almost flow $\phi\in\cA$, a partition $\pi=\Set{t_k}_{k=0}^n$ of $\TT$
    and $K\geq 0$, 
    we denote by $\cP_\pi[\phi,a,K]$ the set of $V$-valued families 
$\Set{y_{t_k}}_{k=0,\dotsc,n}$ such that $y_0=a$ and 
    \begin{equation}
	\label{eq:1disc}
    \abs{y_j-\phi_{j,i}(y_i)}\leq K \varpi(\omega_{i,j}),\ 
    \forall0\leq  i\leq j\leq n.
    \end{equation}
    We also set $\cP_\pi[\phi,a]\eqdef\bigcup_{K\geq 0}\cP_\pi[\phi,a,K]$.

    Similarly, we denote by $\cP[\phi,a]$ the set of paths $y\in\cC(\TT,\uV)$ with $y_0=a$ and 
    \begin{equation}
	\label{eq:1}
    \abs{y_t-\phi_{t,s}(y_s)}\leq K\varpi(\omega_{s,t}),\ \forall (s,t)\in\rTT^2,
    \end{equation}
    for some constant $K\geq 0$.

    The elements of $\cP_\pi[\phi,a]$ and $\cP[\phi,a]$ are 
    called \emph{solutions in the sense of Davie}, which we shorten by \emph{D-solutions}.
\end{definition}
\begin{definition}[Numerical scheme]
    \label{def:numerical-scheme}
    Given a partition $\pi=\Set{t_i}_{i=0}^n$ of $\TT$, 
    the \emph{numerical scheme} of an almost flow $\phi\in\cA$ is the
    sequence $\Set{y_{t_k}}_{k=0,\dotsc,n}$ constructed iteratively by 
    \begin{equation*}
	y_0=a\text{ and }y_{t_{k+1}}=\phi_{t_{k+1},t_{k}}(y_{t_k}),\ k=0,\dotsc,n-1.
    \end{equation*}
\end{definition}

We now define the notion of convergence of partitions.

\begin{definition}[Mesh and convergence]
    \label{def:mesh}
For a partition $\pi=\Set{t_i}_{i=0}^n$ of $\TT$, we define its \emph{mesh} 
by $\mesh\pi\eqdef\max{i=0,\dotsc,n-1}\Set{t_{i+1}-t_i}$. 
This define an order on partitions: $\sigma\leq \pi$ if $\mesh\sigma\leq \mesh\pi$.
A family $\Set{a_\pi}_{\pi}$ with values in a metric space $(\uV,d)$
is said to \emph{converge} to $a\in\uV$ whenever for any $\epsilon>0$
there exists a partition $\pi$ such that for any $\sigma\leq \pi$, 
$d(a_\sigma,a)\leq \epsilon$. 
\end{definition}

\begin{remark}
    Inclusion defines another partial order on partitions \cite{mcshane52a}. We do not use
    it, except as some tool in some proofs.
\end{remark}

\section{Stability results on D-solutions}

\label{sec:D-sol}

\subsection{Space of D-solutions}

We start by giving some precisions on the discrete and continuous spaces of D-solutions.

\begin{lemma}[{The spaces $\cP_{\pi}[\phi,a]$ are not empty}]
    \label{lem:2}
    For any almost flow $\phi\in\cA[\delta,M]$,  
    for any partition $\pi$ of $\TT$ and any $a\in\uV$, 
    the numerical scheme~$y^\pi$ associated to $\phi$ with $y^\pi_0=a$ 
    belongs to $\cP_\pi[\phi,a,L]$ with
    \begin{equation}
	\label{eq:L}
	L\eqdef \frac{2(\delta_T+M)}{1-\varkappa-2\delta_T}.
      \end{equation}
    Moreover, if $\psi$ is in the same galaxy as $\phi$, then $\cP_\pi[\psi,a]=\cP_\pi[\phi,a]$.
\end{lemma}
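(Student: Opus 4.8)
\emph{Overview.} The non‑emptiness claim is the substantive part; the galaxy invariance is a one‑line triangle inequality. For the first part I would fix a pair of partition points $t_i\le t_j$, write $e_{i,j}\eqdef\abs{y^\pi_j-\phi_{j,i}(y^\pi_i)}$, and prove $e_{i,j}\le L\varpi(\omega_{i,j})$ by induction on $m\eqdef j-i$, the induction hypothesis $H(m)$ being quantified over \emph{all} pairs at distance $m$. For $m\in\{0,1\}$ one has $e_{i,j}=0$: indeed $\phi_{s,s}=\id$ since $\normsup{\widehat\phi_{s,s}}\le\delta_0=0$, and $y^\pi_{i+1}=\phi_{i+1,i}(y^\pi_i)$ by construction. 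Moreover, if $\omega_{i,j}=0$ then super‑additivity forces $\omega_{i',j'}=0$ for all $i\le i'\le j'\le j$, hence $\diff\phi\equiv 0$ on $[t_i,t_j]$ by \eqref{eq:def:2}, so $\phi$ is a genuine flow there and the scheme returns $y^\pi_j=\phi_{j,i}(y^\pi_i)$; thus we may assume $\omega_{i,j}>0$ and $m\ge 2$.

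\emph{Inductive step.} Let $k$ be the largest partition index with $i\le k<j$ and $\omega_{i,k}\le\tfrac12\omega_{i,j}$. The heart of the argument is a Chen‑type telescoping expansion of $y^\pi_j-\phi_{j,i}(y^\pi_i)$ through $k$ and (when $k+1<j$) through $k+1$ as well: each consecutive difference is either a remainder $\diff\phi_{\cdot,\cdot,\cdot}(\cdot)$, bounded by $M\varpi(\omega_{i,j})$ via \eqref{eq:def:2}; or a difference $\phi_{t,s}(u)-\phi_{t,s}(v)$ with $\abs{u-v}=e_{\cdot,\cdot}$, bounded by $e_{\cdot,\cdot}+\osc(\widehat\phi_{t,s},e_{\cdot,\cdot})$; or the difference $\phi_{j,k+1}(y^\pi_{k+1})-\phi_{j,k+1}(\phi_{k+1,k}(y^\pi_k))$, which is exactly $0$ because $y^\pi_{k+1}=\phi_{k+1,k}(y^\pi_k)$. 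Feeding in $H(m')$ for $m'<m$ (namely $e_{i,k}\le L\varpi(\omega_{i,k})$ and $e_{k+1,j}\le L\varpi(\omega_{k+1,j})$), converting the oscillation terms through the class‑$\cO$ inequality \eqref{eq:defO} (with $\normO{\widehat\phi}\le1$, on the triple $(i,k,j)$), and using Global Hypothesis~\ref{hyp:4} in the form $\varpi(\tfrac12\omega_{i,j})\le\tfrac{\varkappa}{2}\varpi(\omega_{i,j})$ together with $\omega_{i,k}\le\tfrac12\omega_{i,j}$ and $\omega_{k+1,j}<\tfrac12\omega_{i,j}$, one collects in the worst case
\[
e_{i,j}\le\bigl(\varkappa L+\delta_T(1+L)+2M\bigr)\varpi(\omega_{i,j}).
\]
Choosing $L$ so that the bracket is $\le L$ — solvable precisely because $\varkappa+2\delta_T<1$ (Global Hypothesis~\ref{hyp:time}) — closes the induction, and one checks that the value \eqref{eq:L} is admissible (it is not the sharpest, but is the convenient one for later use).

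\emph{Main obstacle.} The one delicate point is that no partition point need split $\omega_{i,j}$ evenly: maximality of $k$ yields only $\omega_{i,k}\le\tfrac12\omega_{i,j}$, while $\omega_{k,j}$ may be comparable to $\omega_{i,j}$, so a single intermediate point gives no contraction. The fix is precisely to expand through \emph{both} $k$ and $k+1$: the interval $[t_k,t_{k+1}]$ carries no scheme error, and super‑additivity then forces $\omega_{k+1,j}<\tfrac12\omega_{i,j}$, which is what makes Global Hypothesis~\ref{hyp:4} bite on \emph{both} remaining pieces. The degenerate configurations $k=i$ and $k+1=j$ are handled by the same expansion with one fewer intermediate point, and only improve the constants.

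\emph{Galaxy invariance.} Here the spaces $\cP_\pi[\cdot,a]$ are defined directly from the family of maps, so nothing beyond the triangle inequality is needed. If $\phi,\psi$ lie in the same galaxy, fix $K'\ge0$ with $\normsup{\phi_{t,s}-\psi_{t,s}}\le K'\varpi(\omega_{s,t})$. For $y\in\cP_\pi[\phi,a,K]$ and partition points $i\le j$,
\[
\abs{y_j-\psi_{j,i}(y_i)}\le\abs{y_j-\phi_{j,i}(y_i)}+\normsup{\phi_{j,i}-\psi_{j,i}}\le(K+K')\varpi(\omega_{i,j}),
\]
so $y\in\cP_\pi[\psi,a,K+K']$; exchanging $\phi$ and $\psi$ gives the reverse inclusion, whence $\cP_\pi[\phi,a]=\cP_\pi[\psi,a]$.
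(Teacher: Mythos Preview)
Your argument is correct and follows essentially the same Davie--type induction as the paper: both choose an intermediate partition point near the $\omega$-median (your $k$ is the paper's $j^*-1$) and telescope through two consecutive points so that one scheme error vanishes, then close the induction using Global Hypotheses~\ref{hyp:4} and~\ref{hyp:time}. The only cosmetic difference is that you expand the right piece through $k+1$ (exploiting $e_{k,k+1}=0$ to kill an oscillation term) whereas the paper expands the left piece through $j^*-1$, which is why you get a single $\delta_T(1+L)$ rather than the paper's $2\delta_T(1+L)$; as you note, the paper's value \eqref{eq:L} is still admissible for your sharper inequality.
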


The proof of this result is a variant of the one of the Davie lemma given in \cite{brault1,brault2}.

\begin{proof}
    We set $U_{i,j}\eqdef \abs{y_j-\phi_{j,i}(y_i)}$ for $i\leq j$. Following \cite{davie05a,brault1}, 
    we proceed by induction on $j-i$. First, we remark that  $U_{i,i}=U_{i,i+1}=0$.
    Second, for 
    $i\leq j\leq k$ with $i<k$, 
    \begin{multline}
	\label{eq:39}
	y_k-\phi_{k,i}(y_i)
	\\
	=y_k-\phi_{k,j}(y_j)
	+\phi_{k,j}(y_j)-\phi_{k,j}(\phi_{j,i}(y_i))
	+\phi_{k,j}(\phi_{j,i}(y_i))-\phi_{k,i}(y_i)
	\\
	=
	y_k-\phi_{k,j}(y_j)
	+y_j-\phi_{j,i}(y_i)
	+\widehat{\phi}_{k,j}(y_j)-\widehat{\phi}_{k,j}(\phi_{j,i}(y_i))\\
	+\phi_{k,j}(\phi_{j,i}(y_i))-\phi_{k,i}(y_i).
    \end{multline}

    Our induction hypothesis is that  
    $U_{i,j}\leq L\varpi(\omega_{i,j})$ when $\abs{j-i}\leq m$ for some level $m$, where $L$ is defined in \eqref{eq:L}. This is true for $m=0,1$.

    Assume that the induction hypothesis is true whenever $j-i\leq m$
    for a level $m\geq 1$.
    We fix $i<k$ such that $\abs{k-i}\leq m+1$. We are going
      to show that $U_{i,k}\leq L\varpi(\omega_{i,k})$.

      If $\omega_{i,k}=0$, it follows by super-additivity of
      the control $\omega$ that $\omega_{i,k-1}=\omega_{k-1,k}=0$. This implies according to induction hypothesis that
      $U_{i,k-1}=U_{k-1,k}=0$. Then, using \eqref{eq:39} with
      $(i,j,k)=(i,k-1,k)$ and \eqref{eq:defO}, we get
      \begin{equation}
        \label{eq:5}
        U_{i,k}\leq \delta_T(1+L)\varpi(\omega_{i,k})+M\varpi(\omega_{i,k}).
      \end{equation}
      It follows that $U_{i,k}=0$, therefore $U_{i,k}\leq
      L\varpi(\omega_{i,k})$ holds.
      
      If $\omega_{i,k}>0$, let us define $j^*\eqdef \inf\left\{j\in
      \Set{i+1,\dotsc,k}\textrm{ such
        that } \omega_{i,j}>\frac{1}{2}\omega_{i,k}\right\}$.
        It follows of our definition of $j^*$ and from the
    super-additivity of $\omega$ that $\omega_{j^*,k}\leq
    \frac{1}{2}\omega_{i,k}$ and $\omega_{i,j^*-1
    }\leq \frac{1}{2}\omega_{i,k}$.
    
    We consider two cases : either $j^*<k$ or $j^*=k$.
    For the first case, using the fact that $\phi$ is an almost flow,
    \eqref{eq:defO} for $(r,s,t)=(i,j^*,k)$ and the equality~\eqref{eq:39} when
    $j=j^*$,
    \begin{align}
      \label{eq:2}
      U_{i,k}&\leq
      U_{i,j^*}+U_{j^*,k}+\osc\left(\widehat{\phi}_{k,j^*},U_{i,j^*}\right)+M\varpi(\omega_{i,k})\\
      &\leq U_{i,j^*}+U_{j^*,k}+C\delta_T (1+L)\varpi(\omega_{i,k})+M\varpi(\omega_{i,k}).
    \end{align}
    Then, we control $U_{i,j^*}$ in \eqref{eq:2} using \eqref{eq:39}
    with $(i,j,k)=(i,j^*-1,j^*)$,
    \begin{equation}
      \label{eq:Uj*}
      U_{i,k}\leq
      U_{i,j^*-1}+U_{j^*,k}+\delta_T(1+L)(\varpi(\omega_{i,j^*})+\varpi(\omega_{i,k}))+M\varpi(\omega_{i,j^*})+M\varpi(\omega_{i,k}).
    \end{equation}
    We now applying the induction hypothesis to $U_{i,j^*-1}$, 
    $U_{j^*,k}$ in \eqref{eq:Uj*}, and we use Global Hypothesis~\ref{hyp:4}
    to get
    \begin{equation}
      \label{eq:case1}
      U_{i,k}\leq 
      \kappa L\varpi(\omega_{i,k})+2\delta_T(1+L)\varpi(\omega_{i,k})+2M\varpi(\omega_{i,k}).
    \end{equation}
    Thus, with $L$ given by \eqref{eq:L}, $U_{i,k}\leq L\varpi(\omega_{i,k})$.

    In the second case, when $j^*=k$, we use \eqref{eq:39} with
    $j=k-1$ and \eqref{eq:defO} to get
    \begin{equation}
      \label{eq:3}
      U_{i,k}\leq U_{i,k-1}+\delta_T(1+L)\varpi(\omega_{i,k})+M\varpi(\omega_{i,k}).
    \end{equation}
    Thus, applying the induction hypothesis in \eqref{eq:3} to $U_{i,k-1}$,
    \begin{equation}
      \label{eq:4}
      U_{i,k}\leq \frac{\kappa L}{2}\varpi(\omega_{i,k})+\delta_T(1+L)\varpi(\omega_{i,k})+M\varpi(\omega_{i,k}).
    \end{equation}
    Eq.~\eqref{eq:4} implies \eqref{eq:case1}. It follows from
    the first case that $U_{i,k}\leq L\varpi(\omega_{i,k})$ with the
    same constant $L$. This concludes the induction.
    
   Therefore, 
    the numerical scheme associated to $\phi$
    belongs to~$\cP_\pi[\phi,a,L]$.
    That $\cP_\pi[\phi,a]=\cP_\pi[\psi,a]$ is immediate from
    \eqref{eq:gal:1}.
\end{proof}

The next result is a direct consequence of the continuous time Davie lemma 
\cite[Lemma~10]{brault2}. 

\begin{lemma}[Uniform control on D-solutions]
    \label{lem:1}
    Consider $\phi\in\cA[\delta,M]$.
    Assume that for some $A>0$, $y\in\cP[\phi,a,A]$.
    Then $y\in\cP[\phi,a,L]$ with $L$ given by \eqref{eq:L}.
    Therefore, $\cP[a,\phi]=\bigcup_{A\leq L}\cP[a,\phi,A]$.
\end{lemma}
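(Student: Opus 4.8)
The plan is to mimic the proof of Lemma \ref{lem:2}, which establishes the analogous bound for the discrete numerical scheme, but now carried out in continuous time using the structure of a D-solution rather than that of the iteration. Let $y \in \cP[\phi,a,A]$ and set $U_{s,t} \eqdef \abs{y_t - \phi_{t,s}(y_s)}$ for $(s,t)\in\rTT^2$; by hypothesis $U_{s,t} \leq A\varpi(\omega_{s,t})$, and the goal is to upgrade this to $U_{s,t} \leq L\varpi(\omega_{s,t})$ with $L$ given by \eqref{eq:L}. First I would record the analogue of the algebraic identity \eqref{eq:39}: for $s \leq u \leq t$,
\begin{equation*}
    y_t - \phi_{t,s}(y_s) = \bigl(y_t - \phi_{t,u}(y_u)\bigr) + \bigl(y_u - \phi_{u,s}(y_s)\bigr) + \bigl(\widehat{\phi}_{t,u}(y_u) - \widehat{\phi}_{t,u}(\phi_{u,s}(y_s))\bigr) + \diff\phi_{t,u,s}(y_s),
\end{equation*}
which yields $U_{s,t} \leq U_{u,t} + U_{s,u} + \osc(\widehat{\phi}_{t,u}, U_{s,u}) + M\varpi(\omega_{s,t})$, using \eqref{eq:def:2}.

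The core of the argument is then a bootstrap: knowing $U_{s,t} \leq B\varpi(\omega_{s,t})$ uniformly for some constant $B$, I want to deduce $U_{s,t} \leq \tfrac{1}{2}(\varkappa B + \text{const})\varpi(\omega_{s,t})$ (schematically), so that iterating drives $B$ down to the fixed point $L$. This is where the continuous-time Davie lemma, \cite[Lemma~10]{brault2}, does the work: given a fixed pair $(s,t)$ with $\omega_{s,t}>0$, one picks an intermediate point $u$ splitting $\omega_{s,t}$ roughly in half — concretely, using a continuity/intermediate-value argument on $r \mapsto \omega_{s,r}$ near the diagonal, or passing through the argument on finite partitions and taking a limit as in Lemma \ref{lem:2}'s case analysis — so that $\omega_{s,u} \leq \tfrac12\omega_{s,t}$ and $\omega_{u,t}\leq\tfrac12\omega_{s,t}$. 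Plugging into the displayed inequality, applying the current bound $B$ to $U_{s,u}$ and $U_{u,t}$, using \eqref{eq:defO} to bound $\osc(\widehat{\phi}_{t,u}, U_{s,u}) \leq \osc(\widehat{\phi}_{t,u}, B\varpi(\omega_{s,u})) \leq C\delta_T(1+B)\varpi(\omega_{s,t})$, and invoking Global Hypothesis \ref{hyp:4} for $\varpi(\omega_{s,u}) + \varpi(\omega_{u,t}) \leq \varkappa\varpi(\omega_{s,t})$, gives a new bound of the form $U_{s,t} \leq (\varkappa B + 2\delta_T(1+B) + 2M)\varpi(\omega_{s,t})$ after absorbing the split. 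Since $L = \tfrac{2(\delta_T+M)}{1-\varkappa-2\delta_T}$ is precisely the fixed point of $B \mapsto \varkappa B + 2\delta_T(1+B) + 2M$ and Global Hypothesis \ref{hyp:time} ensures the contraction factor $\varkappa + 2\delta_T < 1$, the iteration $B_0 = A$, $B_{n+1} = \varkappa B_n + 2\delta_T(1+B_n)+2M$ converges monotonically (after the first step) to $L$, and since each $B_n$ is a valid uniform bound, passing to the limit gives $U_{s,t}\leq L\varpi(\omega_{s,t})$ for all $(s,t)$. The final sentence, $\cP[a,\phi] = \bigcup_{A\leq L}\cP[a,\phi,A]$, is then immediate: any $y \in \cP[a,\phi]$ lies in $\cP[a,\phi,A]$ for some $A$, hence in $\cP[a,\phi,L]$, while the reverse inclusion is trivial.

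The main obstacle I anticipate is not the algebra — that is a routine transcription of the discrete estimate — but the selection of the splitting point $u$ in continuous time when $\omega$ need only be continuous near its diagonal and super-additive (not strictly monotone or continuous everywhere). One must argue carefully, exactly as the continuous Davie lemma of \cite{brault2} does, that an appropriate $u$ with both halves controlled by $\tfrac12\omega_{s,t}$ exists; this is the reason the lemma is stated as "a direct consequence of \cite[Lemma~10]{brault2}" rather than reproved from scratch. A secondary subtlety is ensuring the constant in the $\osc$ bound coming from \eqref{eq:defO} is handled with the right bookkeeping (the factor $C = \normO{\widehat\phi} \leq 1$ from \eqref{eq:def:1}), but this is exactly as in Lemma \ref{lem:2} and introduces no new difficulty.
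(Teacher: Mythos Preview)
Your proposal is correct and takes essentially the same approach as the paper: both reduce to the continuous-time Davie lemma \cite[Lemma~10]{brault2}, and your bootstrap iteration $B_{n+1}=(\varkappa+2\delta_T)B_n+2(\delta_T+M)$ converging to the fixed point $L$ is precisely one way to package that lemma's conclusion. The paper gives no further detail beyond the citation, so your sketch of the algebraic identity, the splitting, and your correct identification of the only genuine technical obstacle (selecting a midpoint $u$ with both $\omega_{s,u}$ and $\omega_{u,t}$ controlled by $\tfrac12\omega_{s,t}$ when $\omega$ is merely super-additive and continuous near the diagonal) is an accurate expansion of what the cited lemma supplies.
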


\begin{notation}[Projection and interpolation]
    \label{not:5}
Let $\pi$ and $\sigma$ be two partitions of $\TT$ with $\sigma\subset \pi$. 
Any path $y$ in $\cP_\sigma[\phi,a]$ or in $\cP[\phi,a]$ 
is naturally projected onto $\Set{y_{t_i}}_{i=0}^n$ in $\cP_\pi[\phi,a]$.
Conversely, any element $y\in\cP_\pi[\phi,a]$ is extended through 
a linear interpolation as an element of $\cC([0,T],\uV)$. Again, we still
denote this element by~$y$.
\end{notation}

Using the above convention on projection and extension, we endow 
$\cP_\pi[\phi,a]$ with the uniform norm $\normsup{\cdot}$.
The proofs of the next lemmas are then immediate.

\begin{lemma}[Convergence]
    \label{lem:convergence}
    Let $K\geq 0$.
    Let $\Set{y^\pi}_{\pi}$ be a sequence
    of paths in $\cP_\pi[\phi,a,K]$ and $y\in\cC([0,T],\uV)$ 
    such that $y^\pi$ converges in $\normsup{\cdot}$ to $y$.
    Then $y\in\cP[\phi,a,K]$.
\end{lemma}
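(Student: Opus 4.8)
The plan is to pass to the limit directly in the defining inequality~\eqref{eq:1disc}. Fix $(s,t)\in\rTT^2$. First I would approximate $s$ and $t$ by partition points: since $y^\pi$ is, by Notation~\ref{not:5}, the linear interpolation of its values on the nodes of $\pi$, I need to locate $s$ and $t$ inside the mesh intervals of $\pi$. Write $s\in[t_i^\pi,t_{i+1}^\pi]$ and $t\in[t_j^\pi,t_{j+1}^\pi]$ for the appropriate indices, and set $s_\pi\eqdef t_i^\pi$, $t_\pi\eqdef t_{j}^\pi$ (or the neighbouring nodes, chosen so that $s_\pi\le t_\pi$; one has to be slightly careful when $s$ and $t$ fall in the same mesh interval, but then both are within $\mesh\pi$ of each other and the estimate is trivial in the limit). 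Along a subsequence of partitions with $\mesh\pi\to 0$ we have $s_\pi\to s$ and $t_\pi\to t$.

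Next I would estimate $\abs{y_t - \phi_{t,s}(y_s)}$ by inserting the discrete quantity. By the triangle inequality,
\begin{equation*}
\abs{y_t-\phi_{t,s}(y_s)}
\le \abs{y_t-y^\pi_{t_\pi}}
+\abs{y^\pi_{t_\pi}-\phi_{t_\pi,s_\pi}(y^\pi_{s_\pi})}
+\abs{\phi_{t_\pi,s_\pi}(y^\pi_{s_\pi})-\phi_{t,s}(y_s)}.
\end{equation*}
The middle term is $\le K\varpi(\omega_{s_\pi,t_\pi})\le K\varpi(\omega_{s,t})$ by~\eqref{eq:1disc} and monotonicity of $\varpi$ together with super-additivity of $\omega$ (so $\omega_{s_\pi,t_\pi}\le \omega_{s,t}$). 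The first term tends to $0$: $y^\pi\to y$ uniformly, and $y$ is continuous, so $\abs{y_t-y^\pi_{t_\pi}}\le \abs{y_t-y_{t_\pi}}+\normsup{y-y^\pi}\to 0$. For the third term I would split once more, writing $\abs{\phi_{t_\pi,s_\pi}(y^\pi_{s_\pi})-\phi_{t,s}(y_s)}\le \abs{\phi_{t_\pi,s_\pi}(y^\pi_{s_\pi})-\phi_{t_\pi,s_\pi}(y_s)}+\abs{\phi_{t_\pi,s_\pi}(y_s)-\phi_{t,s}(y_s)}$; the first piece is controlled by $\osc(\widehat\phi_{t_\pi,s_\pi},\varepsilon_\pi)$ with $\varepsilon_\pi\eqdef\abs{y^\pi_{s_\pi}-y_s}\to 0$, which goes to $0$ uniformly by~\eqref{eq:defO} (taking $L=\varepsilon_\pi/\varpi(\omega_{\cdot})$ there, or more simply using Remark~\ref{rem:3} and the fact that $\widehat\phi$ has oscillation bounded by $C\delta_T(1+L)\varpi(\omega)$ which is small), while the second piece needs joint continuity of $(s,t)\mapsto\phi_{t,s}(y_s)$.

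The step I expect to be the main obstacle — or at least the one requiring the most care — is the convergence $\phi_{t_\pi,s_\pi}(y_s)\to\phi_{t,s}(y_s)$, i.e. some form of continuity of the almost flow in its time indices. This is exactly the kind of regularity that should follow from $\widehat\phi\in\cO(\uV,\uV)$ combined with the structural hypotheses on $\omega$ (continuity near the diagonal) and on $\varpi$ (Remark~\ref{rem:3}): as $s_\pi\to s$ and $t_\pi\to t$ one expects $\widehat\phi_{t_\pi,s_\pi}(y_s)\to\widehat\phi_{t,s}(y_s)$. If the paper's standing assumptions do not literally give this pointwise-in-time continuity, the cleaner route is to avoid it altogether: choose the partitions so that $\sigma\subset\pi$ for a fixed reference $\sigma$ containing prescribed nodes, or — better — exploit that it suffices to prove~\eqref{eq:1} for $s,t$ ranging over a dense set of pairs (e.g. dyadic times that are eventually nodes of every $\pi$ in the sequence), and then extend to all $(s,t)\in\rTT^2$ by continuity of $y$ and of $a\mapsto\phi_{t,s}(a)$ at fixed $s,t$ using a further limiting argument. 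With $s,t$ eventually in $\pi$, the terms involving $s_\pi,t_\pi$ disappear and the estimate collapses to $\abs{y_t-\phi_{t,s}(y_s)}\le K\varpi(\omega_{s,t})$ in the limit, giving $y\in\cP[\phi,a,K]$ as claimed. This is presumably why the paper calls the proof "immediate": on the common refinement the discrete constraint passes to the limit with no loss in the constant $K$.
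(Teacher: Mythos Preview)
Your final route is the correct one and is exactly what the paper means by ``immediate'': for fixed $(s,t)\in\rTT^2$, restrict to partitions $\pi$ that \emph{contain} $s$ and $t$, use~\eqref{eq:1disc} at those two nodes to get $\abs{y^\pi_t-\phi_{t,s}(y^\pi_s)}\le K\varpi(\omega_{s,t})$ with no loss of constant, and pass to the limit using uniform convergence and the spatial continuity of $\phi_{t,s}$ (built into Definition~\ref{not:6}: class-$\cO$ functions map into $\cC(\uV,\uV)$). The point you should make explicit is that $\Set{y^\pi}_\pi$ is a \emph{net} indexed by all partitions with the mesh order of Definition~\ref{def:mesh}, so partitions containing any prescribed pair $(s,t)$ are cofinal; this is what lets you avoid approximating the times altogether.

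The long first route through $s_\pi,t_\pi$ is an unnecessary detour and has two defects. First, your monotonicity claim $\omega_{s_\pi,t_\pi}\le\omega_{s,t}$ does not follow from super-additivity with your choice $s_\pi\le s$, $t_\pi\le t$, since neither interval contains the other (it would work if you took the right neighbour for $s$ and the left neighbour for $t$). Second, you correctly flag that the step $\phi_{t_\pi,s_\pi}(y_s)\to\phi_{t,s}(y_s)$ requires joint continuity of the almost flow in its time indices, which is \emph{not} part of the standing hypotheses; the paper only assumes continuity of $\omega$ near the diagonal and does not assert $(s,t)\mapsto\phi_{t,s}(a)$ is continuous. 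The dense-set alternative you sketch is likewise superfluous (and the parenthetical ``dyadic times that are eventually nodes of every $\pi$'' is false for a net ordered by mesh). Drop the detour and lead with the cofinal-partition argument; that is the whole proof.
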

\begin{lemma}[Convergence II]
    \label{lem:convergence2}
    Let us consider $K,M\geq 0$.
    For each $n\in\NN$, let us consider $\phi^n\in\cA[\delta,M]$, $a^n\in\uV^n$
    and $y^n\in\cP[a^n,\phi^n,K]$.
    Let $\phi\in\cA[\delta,M]$ and $a\in\uV$. Assume
    that for some path $y\in\cC(\TT,\uV)$, 
    \begin{equation*}
	d_\infty(\phi^n,\phi)+\abs{a^n-a}+\normsup{y^n-y}\xrightarrow[n\to\infty]{}0.
    \end{equation*}
    Then $y\in\cP[\phi,a,K]$.
\end{lemma}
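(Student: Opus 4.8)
The plan is to pass to the limit directly in the defining inequality \eqref{eq:1} for D-solutions, exploiting that all the objects involved ($\phi^n$, $a^n$, $y^n$) converge in the stated senses and that the estimate \eqref{eq:1} for $y^n\in\cP[a^n,\phi^n,K]$ holds with a single constant $K$ independent of $n$. Fix $(s,t)\in\rTT^2$. For each $n$ we have
\begin{equation*}
    \abs{y^n_t-\phi^n_{t,s}(y^n_s)}\leq K\varpi(\omega_{s,t}).
\end{equation*}
It suffices to show that $\phi^n_{t,s}(y^n_s)\to\phi_{t,s}(y_s)$ in $\uV$, since then $y^n_t\to y_t$ combined with the lower semicontinuity of the norm (in fact plain continuity) gives $\abs{y_t-\phi_{t,s}(y_s)}\leq K\varpi(\omega_{s,t})$, and as $(s,t)$ was arbitrary this is exactly $y\in\cP[\phi,a,K]$. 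The fact that $y_0=a$ follows from $\abs{a^n-a}\to 0$ and $y^n_0=a^n$.

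The one genuine point is the convergence $\phi^n_{t,s}(y^n_s)\to\phi_{t,s}(y_s)$, which I would split as
\begin{equation*}
    \abs{\phi^n_{t,s}(y^n_s)-\phi_{t,s}(y_s)}
    \leq \abs{\phi^n_{t,s}(y^n_s)-\phi_{t,s}(y^n_s)}
    + \abs{\phi_{t,s}(y^n_s)-\phi_{t,s}(y_s)}.
\end{equation*}
The first term is bounded by $d_\infty(\phi^n,\phi)$ by definition \eqref{eq:def:5}, hence tends to $0$. For the second term we need continuity of the single map $\phi_{t,s}$ at $y_s$, and here is where the structure of $\cF[\delta]$ enters: writing $\phi_{t,s}=\id+\widehat\phi_{t,s}$ with $\widehat\phi\in\cO(\uV,\uV)$, $\normO{\widehat\phi}\leq 1$, we have from \eqref{eq:defO} (taking $r=s$, and $L$ so that $L\varpi(\omega_{s,s})=L\cdot 0$ accommodates any displacement — more precisely bounding $\osc(\widehat\phi_{t,s},\eta)$ by choosing $L$ with $L\varpi(\omega_{s,t})\geq\eta$, using Remark~\ref{rem:3} that $\varpi(\omega_{s,t})>0$ when $\omega_{s,t}>0$) a modulus-of-continuity bound on $\widehat\phi_{t,s}$. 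Thus $\abs{\phi_{t,s}(a)-\phi_{t,s}(b)}\leq\abs{a-b}+\osc(\widehat\phi_{t,s},\abs{a-b})\to 0$ as $\abs{a-b}\to 0$, and applying this with $a=y^n_s$, $b=y_s$ and using $\normsup{y^n-y}\to 0$ finishes the second term. (One must treat separately the degenerate case $\omega_{s,t}=0$, where super-additivity forces $\widehat\phi_{t,s}$ to have zero oscillation, so $\phi_{t,s}$ is an isometry-like translation — in any case continuous — and the estimate \eqref{eq:1} then reads $\abs{y^n_t-\phi^n_{t,s}(y^n_s)}\le 0$, passing trivially to the limit.)

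I do not expect a serious obstacle here: this is the "immediate" companion to Lemma~\ref{lem:convergence}, the only subtlety being that the almost flow itself varies with $n$, which is absorbed cleanly by the $d_\infty$ term, and the continuity of a fixed $\phi_{t,s}$, which is built into membership in $\cF[\delta]$ via the class-$\cO$ condition \eqref{eq:defO}. The main thing to be careful about is to keep the constant $K$ uniform — it is, by hypothesis — and to handle the $\omega_{s,t}=0$ case by the same super-additivity argument used in the proof of Lemma~\ref{lem:2}.
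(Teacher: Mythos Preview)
Your proposal is correct and matches the paper's intent: the paper simply declares that the proof is immediate and gives no further details, and your pass-to-the-limit argument is exactly the immediate one. One small simplification: you need not extract continuity of $\phi_{t,s}$ from the oscillation bound \eqref{eq:defO} or treat the case $\omega_{s,t}=0$ separately, because by Definition~\ref{not:6} a function of class $\cO$ takes values in $\cC(\uV,\uV)$, so $\widehat\phi_{t,s}$ (and hence $\phi_{t,s}=\id+\widehat\phi_{t,s}$) is continuous by assumption; the splitting $\abs{\phi^n_{t,s}(y^n_s)-\phi_{t,s}(y_s)}\leq d_\infty(\phi^n,\phi)+\abs{\phi_{t,s}(y^n_s)-\phi_{t,s}(y_s)}$ then concludes directly.
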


\subsection{From discrete to continuous functionals on D-solutions}

In this section, we construct functionals on $\cP_\pi[\phi,a]$ and thus
on $\cP[\phi,a]$ using a limit argument. These functionals are to be seen 
as integrals that are defined only on D-solutions, unlike Young or rough integrals.

\begin{proposition}
    \label{prop:2}
Let $\phi\in\cA[\delta,M]$ and $\pi$ be a partition of $\TT$. Recall
that $\widehat{\phi}$ is defined by~\eqref{eq:def:1}.
Let us set for $i,j\in\pi^2_+$, 
\begin{equation*}
    \Phi^\pi_{i,j}(y)\eqdef\sum_{k=i}^{j-1} \widehat{\phi}_{k+1,k}(y_{k})\text{ for }
    y=\Set{y_{i}}_{i=0,\dotsc,n}\in\uV^{n+1}.
\end{equation*}
For $y\in\cP_\pi[\phi,a,K]$, 
\begin{gather}
    \label{eq:37}
    \abs{\Phi^\pi_{i,j}(y)-\widehat{\phi}_{j,i}(y_i)}\leq 
   A \varpi(\omega_{i,j})\text{ for any }(i,j)\in\rpi^2
   \\
   \label{eq:38}
   \text{ with }A\eqdef \frac{2(\delta_T(1+K)+M)}{1-\varkappa}.
 \end{gather}
\end{proposition}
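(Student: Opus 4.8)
The plan is to prove the estimate \eqref{eq:37} by the same dyadic/telescoping induction on $j-i$ used in the proof of Lemma~\ref{lem:2}, since $\Phi^\pi_{i,j}$ is additive and the quantity $\Phi^\pi_{i,j}(y)-\widehat{\phi}_{j,i}(y_i)$ plays the role of a ``defect'' that should be controlled by $\varpi(\omega_{i,j})$. Write $V_{i,j}\eqdef\abs{\Phi^\pi_{i,j}(y)-\widehat{\phi}_{j,i}(y_i)}$. First I would record the additivity relation: for $i\leq m\leq j$ in $\pi$,
\begin{equation*}
\Phi^\pi_{i,j}(y)=\Phi^\pi_{i,m}(y)+\Phi^\pi_{m,j}(y),
\end{equation*}
and $V_{i,i}=V_{i,i+1}=0$ because $\Phi^\pi_{i,i+1}(y)=\widehat{\phi}_{i+1,i}(y_i)$. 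The base cases of the induction are thus trivial.

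Next, for $i\le m\le j$ I would decompose, mimicking \eqref{eq:39},
\begin{multline*}
\Phi^\pi_{i,j}(y)-\widehat{\phi}_{j,i}(y_i)
=\bigl(\Phi^\pi_{i,m}(y)-\widehat{\phi}_{m,i}(y_i)\bigr)
+\bigl(\Phi^\pi_{m,j}(y)-\widehat{\phi}_{j,m}(y_m)\bigr)\\
+\bigl(\widehat{\phi}_{j,m}(y_m)-\widehat{\phi}_{j,m}(\phi_{m,i}(y_i))\bigr)
+\bigl(\widehat{\phi}_{j,m}(\phi_{m,i}(y_i))+\widehat{\phi}_{m,i}(y_i)-\widehat{\phi}_{j,i}(y_i)\bigr).
\end{multline*}
The first two terms are $V_{i,m}$ and $V_{m,j}$. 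The third is bounded, using $y\in\cP_\pi[\phi,a,K]$ (so $\abs{y_m-\phi_{m,i}(y_i)}\le K\varpi(\omega_{i,m})$) and \eqref{eq:defO} for $\widehat{\phi}$, by $\normO{\widehat\phi}\,\delta_T(1+K)\varpi(\omega_{i,j})\le\delta_T(1+K)\varpi(\omega_{i,j})$. The last bracket equals $\phi_{j,m}(\phi_{m,i}(y_i))-\phi_{j,i}(y_i)=\diff\phi_{j,m,i}(y_i)$, which by \eqref{eq:def:2} is bounded by $M\varpi(\omega_{i,j})$. Hence
\begin{equation*}
V_{i,j}\le V_{i,m}+V_{m,j}+\bigl(\delta_T(1+K)+M\bigr)\varpi(\omega_{i,j}).
\end{equation*}

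Then I would run the standard argument: assuming $V_{i,j}\le A\varpi(\omega_{i,j})$ whenever $j-i\le\ell$, fix $i<k$ with $k-i\le\ell+1$. If $\omega_{i,k}=0$ super-additivity forces all intermediate controls to vanish and the displayed inequality with $m=k-1$ gives $V_{i,k}=0$. Otherwise pick $j^\ast=\inf\{j: \omega_{i,j}>\tfrac12\omega_{i,k}\}$, so that $\omega_{i,j^\ast-1}\le\tfrac12\omega_{i,k}$ and $\omega_{j^\ast,k}\le\tfrac12\omega_{i,k}$. Applying the displayed inequality twice (first with $m=j^\ast$, then expanding $V_{i,j^\ast}$ with $m=j^\ast-1$), using the induction hypothesis on $V_{i,j^\ast-1}$ and $V_{j^\ast,k}$, and invoking Global Hypothesis~\ref{hyp:4} ($2\varpi(\tfrac12 x)\le\varkappa\varpi(x)$), one gets
\begin{equation*}
V_{i,k}\le \varkappa A\varpi(\omega_{i,k})+2\bigl(\delta_T(1+K)+M\bigr)\varpi(\omega_{i,k}),
\end{equation*}
and with $A$ as in \eqref{eq:38} the right-hand side is exactly $A\varpi(\omega_{i,k})$. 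The case $j^\ast=k$ is handled as in Lemma~\ref{lem:2} by a single split at $k-1$, which produces an even smaller bound, reducing to the previous case. This closes the induction and yields \eqref{eq:37}.

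The only mild subtlety — and the place where one must be slightly careful rather than merely routine — is bookkeeping the constant: one needs $\normO{\widehat\phi}\le 1$ (from \eqref{eq:def:1}) so that the oscillation term contributes $\delta_T(1+K)$ rather than $\normO{\widehat\phi}\,\delta_T(1+K)$, and one must check that the definition of $j^\ast$ still makes sense relative to the coarser index set $\pi$ rather than all of $\TT$; both are exactly as in the proof of Lemma~\ref{lem:2}, so no new idea is required. There is no delicate analytic step here; the proposition is essentially a re-run of the Davie-type induction with $\Phi^\pi$ in place of the path increments.
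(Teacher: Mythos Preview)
Your proof is correct and follows essentially the same route as the paper: you establish the same additive decomposition and the same recursive inequality $V_{i,k}\le V_{i,j}+V_{j,k}+(\delta_T(1+K)+M)\varpi(\omega_{i,k})$ (with $V_{i,i}=V_{i,i+1}=0$), which is exactly what the paper derives. The only cosmetic difference is that the paper, having reached this inequality, simply invokes the Davie lemma \cite[Lemma~9]{brault2} to conclude, whereas you spell out the dyadic $j^\ast$-induction in full (as in Lemma~\ref{lem:2}); both yield the same constant~$A$.
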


\begin{remark}
    \label{rem:5}
    We saw in Lemma~\ref{lem:2} that the numerical scheme $y^\pi$ associated to $\phi$ 
    with $y^\pi_0=a$ belongs to $\cP_\pi[\phi,a,L]$ with $L$ given by \eqref{eq:L}. 
    Therefore, 
    from the very construction of $y^\pi$: $y^\pi_{j}=a+\Phi_{0,j}^\pi(y^\pi)$.
\end{remark}

\begin{proof}
From the very definition of $\Phi^\pi$, 
\begin{equation}
    \label{eq:20}
\Phi_{i,j}^\pi(y)+\Phi_{j,k}^\pi(y)=\Phi_{i,k}^\pi(y)
\text{ for }(i,j,k)\in\rpi^3,
\end{equation}
meaning that $\Phi^\pi$ is additive on the partition $\pi$.

For any $(r,s,t)\in\rTT^3$ and $a\in\uV$,
\begin{equation*}
    \diff\phi_{t,s,r}(a)=
    \phi_{t,s}(\phi_{s,r}(a))-\phi_{t,s}(a)
    =
    \widehat{\phi}_{s,r}(a)
    +
    \widehat{\phi}_{t,s}(a+\widehat{\phi}_{s,r}(a))
    -\widehat{\phi}_{t,r}(a).
\end{equation*}
Thus, for $(i,j,k)\in\rpi^3$, 
\begin{equation}
    \label{eq:20bis}
    \widehat{\phi}_{k,j}(y_j)
    +\widehat{\phi}_{j,i}(y_i)
    -\widehat{\phi}_{k,i}(y_i)
    =
    \widehat{\phi}_{k,j}(y_j)
    -\widehat{\phi}_{k,j}(y_i+\widehat{\phi}_{j,i}(y_i))
    +\phi_{k,j,i}(y_i).
\end{equation}
Note that $y_j-y_i-\widehat{\phi}_{j,i}(y_i)=y_j-\phi_{j,i}(y_i)$.
Since $\phi\in\cA[\delta,M]$ and $y\in\cP_\pi[\phi,a,K]$,
\eqref{eq:defO}, \eqref{eq:def:1}, \eqref{eq:def:3} yield
\begin{multline}
    \label{eq:19}
    \abs{\widehat{\phi}_{k,j}(y_j)
    +\widehat{\phi}_{j,i}(y_i)
    -\widehat{\phi}_{k,i}(y_i)
}
\leq 
\varpi(\omega_{i,j})\delta_T\normO{\widehat{\phi}}\Paren*{1+K}
+M\varpi(\omega_{i,k})
\\
\leq (\delta_T(1+K)+M)\varpi(\omega_{i,k}),
\end{multline}
because $\normO{\widehat{\phi}}\leq 1$.
Combining \eqref{eq:20} with \eqref{eq:19}
implies that $V^\pi_{i,j}\eqdef\abs{\Phi_{i,j}^\pi(y)-\widehat{\phi}_{j,i}(y_i)}$
satisfies
\begin{equation*}
    V_{i,k}^\pi\leq V_{i,j}^\pi+V_{j,k}^\pi+(\delta_T(1+K)+M)\varpi(\omega_{i,k}).
\end{equation*}
Hence, \eqref{eq:37} stems from the Davie lemma \cite[Lemma~9]{brault2}.
\end{proof}
\begin{notation}
    \label{not:3}
    For a partition $\pi=\Set{t_i}_{i=0,\dotsc,n}$ of $\TT$, we set
    \begin{equation}
	\label{eq:mu}
	\mu_{s,t}(\pi)\eqdef \sup_{[t_i,t_{i+1}]\in \pi\cap [s,t]}
\frac{\varpi(\omega_{t_i,t_{i+1}})}{\omega_{t_i,t_{i+1}}}.
    \end{equation}
\end{notation}

\begin{remark}
    \label{rem:3:bis}
    With Remark~\ref{rem:3}, $\mu_{s,t}(\pi)\to 0$ when $\mesh{\pi}\to 0$.
\end{remark}

Let us consider a partition $\pi=\Set{t_i}_{i=0}^n$ of $\TT$. 
Using a linear interpolation,
$\Phi^\pi_{s,t}(y)$ is naturally extended from $\rpi^2$
to $\TT^+_2$.
Therefore, we extend to $\rTT^2$ the family $\Phi^\pi$ as functionals on $\cP[\phi,a]$
or on $\cP_\sigma[\phi,a]$ with $\pi\subset\sigma$.

\begin{corollary}[Consistency]
    \label{cor:3}
    Assuming Hypothesis~\ref{hyp:time} and $\phi\in\cA[\delta,M]$, 
    there exists $\Phi:\cP[\phi,a]\to\cC([0,T],\uV)$ such that 
    for any partition $\pi$ of $\TT$, any $y\in\cP[\phi,a,K]$ and any $K\geq 0$, 
    \begin{gather}
	\label{eq:34}
	\abs{\Phi_{s,t}(y)-\widehat{\phi}_{t,s}(y_s)}\leq A\varpi(\omega_{s,t}),
	\ \forall (s,t)\in\rTT^2,\\
	\label{eq:35}
	\abs{\Phi_{s,t}(y)-\Phi^\pi_{s,t}(y)}\leq A \mu_{s,t}(\pi)\omega_{s,t},
	\ \forall (s,t)\in\rTT^2,\\
	\label{eq:chasles}
	\text{and }
    \Phi_{r,s}(y)+\Phi_{s,t}(y)=\Phi_{r,t}(y)\text{ for }(r,s,t)\in\rTT^3,
    \end{gather}
    with $A$ given by \eqref{eq:38}. Condition \eqref{eq:35} means that $\Phi^\pi$ is 
    \emph{consistent}.
\end{corollary}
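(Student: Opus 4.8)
The plan is to construct $\Phi$ as a limit of the discrete functionals $\Phi^\pi$ along a cofinal sequence of partitions and then verify the three properties. First I would fix $y \in \cP[\phi,a,K]$ (by Lemma~\ref{lem:1} we may take $K = L$ uniformly, but keeping a general $K$ is harmless) and note that for nested partitions $\sigma \subset \pi$, the additivity \eqref{eq:20} of $\Phi^\pi$ together with estimate \eqref{eq:37} gives a control of $\Phi^\pi_{s,t}(y) - \Phi^\sigma_{s,t}(y)$: writing each increment of $\Phi^\sigma$ over an interval $[t_i,t_{i+1}]$ of $\sigma$ as a sum over the sub-intervals of $\pi$ it contains, the discrepancy on each piece is bounded by $2A\varpi(\omega)$ on that piece, so summing and using super-additivity of $\omega$ one gets $\abs{\Phi^\pi_{s,t}(y) - \Phi^\sigma_{s,t}(y)} \le C A \, \mu_{s,t}(\sigma)\,\omega_{s,t}$. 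Since $\mu_{s,t}(\sigma) \to 0$ as $\mesh\sigma \to 0$ by Remark~\ref{rem:3:bis}, the net $\{\Phi^\pi_{s,t}(y)\}_\pi$ is Cauchy (uniformly in $(s,t)$), and $\uV$ being complete it converges; I would \emph{define} $\Phi_{s,t}(y)$ to be this limit. Passing to the limit in the linear-interpolation extension also shows $t \mapsto \Phi_{s,t}(y)$ is continuous, so $\Phi(y) \in \cC([0,T],\uV)$.

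Next I would read off the three displayed conclusions. Estimate \eqref{eq:35} is exactly the Cauchy estimate above with $\sigma$ replaced by $\pi$ after letting the finer partition tend to a limit — one has to be a little careful that the constant stays $A$ rather than $CA$; the clean way is to observe that from \eqref{eq:37}, for any $\pi$, $\abs{\Phi^\pi_{s,t}(y) - \widehat{\phi}_{t,s}(y_s)} \le A\varpi(\omega_{s,t})$ when $(s,t)\in\rpi^2$, and then to estimate $\Phi - \Phi^\pi$ interval by interval over $\pi$, where on each $[t_i,t_{i+1}]$ the difference $\Phi_{t_i,t_{i+1}}(y) - \widehat{\phi}_{t_{i+1},t_i}(y_{t_i})$ is bounded using \eqref{eq:34} and $\Phi^\pi_{t_i,t_{i+1}}(y) - \widehat{\phi}_{t_{i+1},t_i}(y_{t_i}) = 0$ by definition of $\Phi^\pi$; combined with $\varpi(\omega_{t_i,t_{i+1}}) \le \mu_{s,t}(\pi)\,\omega_{t_i,t_{i+1}}$ and super-additivity this yields \eqref{eq:35} with constant $A$. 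Inequality \eqref{eq:34} then follows by taking $\pi$ with $\mesh\pi \to 0$ in \eqref{eq:35} combined with \eqref{eq:37}, or more directly by passing to the limit in \eqref{eq:37} along partitions containing the endpoints $s,t$ (which is why the interpolation/extension convention matters — we need $(s,t)$ to eventually be partition points). Finally \eqref{eq:chasles} is inherited from the exact additivity \eqref{eq:20} of every $\Phi^\pi$ by passing to the limit, taking a sequence of partitions all containing $r,s,t$.

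One routine point to pin down is well-definedness of $\Phi$: a priori the limit could depend on the chosen cofinal sequence of partitions, but the Cauchy estimate shows that any two sequences with mesh tending to $0$ give the same limit, and moreover the limit depends on $y$ only through its values, so $\Phi$ is genuinely a map $\cP[\phi,a] \to \cC([0,T],\uV)$ (defined on continuous D-solutions, and by the projection convention also on discrete ones). The main obstacle I anticipate is purely bookkeeping rather than conceptual: controlling constants so that \eqref{eq:34} and \eqref{eq:35} come out with exactly the advertised constant $A$ from \eqref{eq:38} rather than a larger multiple, and being careful with the super-additivity estimates when summing $\varpi(\omega_{t_i,t_{i+1}})$ over sub-intervals — one uses $\sum_i \varpi(\omega_{t_i,t_{i+1}}) \le \mu_{s,t}(\pi) \sum_i \omega_{t_i,t_{i+1}} \le \mu_{s,t}(\pi)\,\omega_{s,t}$, which is where the factor $\mu_{s,t}(\pi)\omega_{s,t}$ in \eqref{eq:35} comes from. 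Everything else is a direct consequence of Proposition~\ref{prop:2}, Remark~\ref{rem:3:bis}, and completeness of $\uV$.
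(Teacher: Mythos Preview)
Your approach is essentially the paper's: compare $\Phi^\sigma$ and $\Phi^\pi$ for nested partitions via \eqref{eq:37}, obtain a Cauchy net, define $\Phi$ as the limit, and read off \eqref{eq:34}--\eqref{eq:chasles}. One cleanup removes both your constant worry and the circularity: on each interval $[u,v]$ of the coarser partition the coarser term equals $\widehat{\phi}_{v,u}(y_u)$ exactly while the finer sum is $\Phi^{\text{fine}\cap[u,v]}_{u,v}(y)$, so \eqref{eq:37} gives $A\varpi(\omega_{u,v})$ (not $2A$) on each piece, hence $\abs{\Phi^{\text{fine}}_{s,t}(y)-\Phi^{\text{coarse}}_{s,t}(y)}\le A\mu_{s,t}(\text{coarse})\omega_{s,t}$ directly; passing the finer partition to the limit yields \eqref{eq:35} with constant $A$, and \eqref{eq:34} then follows by taking $\pi=\{0,s,t,T\}$, avoiding any appeal to \eqref{eq:34} inside the proof of \eqref{eq:35}.
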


\begin{remark}
    This result does not claim that $\cP[\phi,a]\neq\emptyset$. 
    When $\uV$ is finite dimensional, 
    the Ascoli-Arzelà theorem and thus Lemma~\ref{lem:convergence} apply: 
    the equi-continuity and boundedness of $\Set{y^\pi}_\pi$
    with $y^\pi\in\cP_\pi[\phi,a,L]$ is a direct consequence of~\eqref{eq:def:3} and~\eqref{eq:1disc}. 
    When $\uV$ is infinite dimensional, we
    discuss this point in Section~\ref{sec:generic}.
\end{remark}

\begin{proof}
Let $\sigma$ and $\pi$ be two partitions such that $\pi\subset\sigma$. 
For $(s,t)\in\rpi^2$ and $y\in\cP[\phi,a,L]\subset\cP_\sigma[\phi,a,L]\subset\cP_\pi[\phi,a,L]$
(using the identification of Notation~\ref{not:5}),  
\begin{multline*}
    \abs{\Phi_{s,t}^\sigma(y)-\Phi^\pi_{s,t}(y)}
   =
    \abs*{\sum_{[u,v]\in\pi}
    \left[\sum_{[u',v']\in\sigma\cap[u,v]}
     \widehat{\phi}_{v',u'}(y_{u'}) -\widehat{\phi}_{v,u}(y_{u})\right]}
\\
=
    \abs*{\sum_{[u,v]\in\pi}\Paren{\Phi^{\sigma\cap[u,v]}_{u,v}(y)
        -\widehat{\phi}_{v,u}(y_{u})}}
  \\
\leq \sum_{[u,v]\in\pi} A\varpi(\omega_{u,v})
\leq A\mu_{s,t}(\pi) \omega_{s,t}
\xrightarrow[\mesh{\pi}\to 0]{}0
\end{multline*}
for $A$ given by \eqref{eq:38}.
From this, it is easily deduced that $\Set{\Phi^\pi_{s,t}(y)}_{\pi}$
is a Cauchy sequence
et for any $(s,t)\in\rTT^2$ with respect to the nest of nested sequence of partitions. 
We set $\Phi_{s,t}(y)\eqdef \lim_{\mesh{\pi}\to 0} \Phi_{s,t}^\pi(y)$. 
For any partition $\pi$ , \eqref{eq:35}
is satisfied and so is \eqref{eq:34} by taking $\pi=\Set{0,s,t,T}$.
We then set $\Phi_t(y)\eqdef\Phi_{0,t}(y)$. The Chasles relation \eqref{eq:chasles}
is satisfied because $\Phi^\pi$ satisfies the discrete Chasles relation \eqref{eq:20}.
Combining \eqref{eq:chasles} and \eqref{eq:34}, $\Phi_{s,t}(y)$ is 
uniquely defined thanks to the Additive Sewing Lemma~(see \textit{e.g.} \cite{lyons98a,gub04} 
    or~\cite[Theorem~1, p.~25]{feyel} or~\cite[Lemma~4.2 p.~51]{friz14a}).
\end{proof}
\begin{proposition}
    \label{prop:3}
 We assume Hypothesis~\ref{hyp:time} and $\phi$ an almost flow in $\cA$.
    A path $y\in\cC(\TT,\uV)$ satisfies $y_{t}=\Phi_{0,t}(y)$, $t\in\TT$, 
     if and only if $y\in\cP[\phi,a]$. 
\end{proposition}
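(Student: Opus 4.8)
The plan is to prove the two implications separately, using Corollary~\ref{cor:3} as the main tool in both directions.

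For the "only if" direction, suppose $y\in\cC(\TT,\uV)$ satisfies $y_t=\Phi_{0,t}(y)$ for all $t\in\TT$. First I would observe that the Chasles relation \eqref{eq:chasles} gives $\Phi_{s,t}(y)=\Phi_{0,t}(y)-\Phi_{0,s}(y)=y_t-y_s$ for any $(s,t)\in\rTT^2$. But wait — this already presupposes $y\in\cP[\phi,a]$ to even invoke Corollary~\ref{cor:3}, which is circular. The correct route is subtler: one must first establish that $\Phi$ can be defined (or $\Phi_{0,t}(y)$ makes sense) for the given $y$, then bootstrap regularity. So instead I would argue as follows. From the fixed point equation and the Chasles-type additivity of $\Phi^\pi$ (which holds for \emph{any} path, not just D-solutions, being just a telescoping sum of $\widehat\phi_{k+1,k}(y_k)$), one gets for $(s,t)\in\rpi^2$ that $\Phi^\pi_{s,t}(y)=\sum_{k} \widehat\phi_{k+1,k}(y_k)$. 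The key estimate is to show $|y_t-y_s-\widehat\phi_{t,s}(y_s)|\leq K\varpi(\omega_{s,t})$; I would obtain this by comparing $y_t-y_s=\Phi^\pi_{s,t}(y)$ (summing the fixed-point relation over the partition) with $\widehat\phi_{t,s}(y_s)$, using a discrete Davie-type estimate exactly as in Proposition~\ref{prop:2}. The term $|\Phi^\pi_{s,t}(y)-\widehat\phi_{t,s}(y_s)|$ is controlled by $A\varpi(\omega_{s,t})$ via \eqref{eq:37}, once one knows $y$ restricted to $\pi$ lies in some $\cP_\pi[\phi,a,K']$; and this last fact follows by the same telescoping/induction used to prove Lemma~\ref{lem:2}, exploiting $\phi\in\cA[\delta,M]$ and \eqref{eq:defO}. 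Since $\phi_{t,s}=\id+\widehat\phi_{t,s}$, the estimate $|y_t-y_s-\widehat\phi_{t,s}(y_s)|\leq K\varpi(\omega_{s,t})$ is exactly \eqref{eq:1}, so $y\in\cP[\phi,a,K]$.

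For the "if" direction, suppose $y\in\cP[\phi,a,K]$. Then Corollary~\ref{cor:3} applies directly: $\Phi_{s,t}(y)$ is defined, satisfies the Chasles relation \eqref{eq:chasles}, and \eqref{eq:34} holds. I would then note that both $s\mapsto y_t-y_s$ and $(s,t)\mapsto\Phi_{s,t}(y)$ are additive in the sense of \eqref{eq:chasles}, and both are within $O(\varpi(\omega_{s,t}))$ of $\widehat\phi_{t,s}(y_s)$: the former because $|y_t-y_s-\widehat\phi_{t,s}(y_s)|=|y_t-\phi_{t,s}(y_s)|\leq K\varpi(\omega_{s,t})$ (definition of $\cP[\phi,a,K]$, using $\phi=\id+\widehat\phi$), the latter by \eqref{eq:34}. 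Hence $(s,t)\mapsto (y_t-y_s)-\Phi_{s,t}(y)$ is additive and is $O(\varpi(\omega_{s,t}))=o(\omega_{s,t})$ by Remark~\ref{rem:3}; the uniqueness part of the Additive Sewing Lemma then forces it to vanish identically. Taking $s=0$ and using $y_0=a=\Phi_{0,0}(y)+a$, i.e. normalizing $\Phi_{0,0}(y)=0$, gives $y_t-a=\Phi_{0,t}(y)$. Since by convention $\Phi_t(y)=\Phi_{0,t}(y)$ and $y_0=a$, we need $\Phi_{0,t}(y)$ to already incorporate the base point $a$ — I would reconcile this by noting that in the statement "$y_t=\Phi_{0,t}(y)$" the functional $\Phi$ is the one from Corollary~\ref{cor:3} shifted so that $\Phi_{0,t}(y)=a+\lim_\pi\Phi^\pi_{0,t}(y)$, consistent with Remark~\ref{rem:5} where $y^\pi_j=a+\Phi^\pi_{0,j}(y^\pi)$; with this reading the identity $y_t=\Phi_{0,t}(y)$ is precisely $y_t-a=\lim_\pi\Phi^\pi_{0,t}(y)$, which is what the sewing argument delivers.

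The main obstacle is the "only if" direction: one cannot invoke Corollary~\ref{cor:3} at the outset since that corollary is stated only for elements of $\cP[\phi,a]$, so the regularity estimate $|y_t-\phi_{t,s}(y_s)|\leq K\varpi(\omega_{s,t})$ must be bootstrapped from scratch out of the fixed-point relation $y_t=\Phi_{0,t}(y)$ together with the consistency estimate \eqref{eq:35} and the discrete Davie induction — essentially re-running the proof of Proposition~\ref{prop:2} but now treating $y$ as an unknown constrained only by the fixed-point equation rather than as a known D-solution. I expect this to require care in the order of quantifiers: one shows the discrete D-solution bound on $\pi$ first (by induction on the partition, as in Lemma~\ref{lem:2}), then passes to the continuous bound by letting $\mesh\pi\to0$ using \eqref{eq:35} and the continuity of $y$.
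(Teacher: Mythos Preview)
Your ``if'' direction ($y\in\cP[\phi,a]\Rightarrow y_t=\Phi_{0,t}(y)$) is correct and is exactly the paper's argument: both $(s,t)\mapsto y_t-y_s$ and $(s,t)\mapsto\Phi_{s,t}(y)$ are additive two-parameter functionals that agree with $\widehat{\phi}_{t,s}(y_s)$ up to $O(\varpi(\omega_{s,t}))$, so the uniqueness part of the Additive Sewing Lemma forces equality.

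Your ``only if'' direction is over-engineered, and the bootstrap you sketch does not work as written. The key point you are missing is that in this paper $\Phi$ is \emph{defined} only on $\cP[\phi,a]$: Corollary~\ref{cor:3} explicitly constructs $\Phi:\cP[\phi,a]\to\cC([0,T],\uV)$. Hence the very hypothesis ``$y_t=\Phi_{0,t}(y)$'' already places $y$ in the domain $\cP[\phi,a]$, and the circularity you worry about is not there. The paper's proof of this direction is one line: from $y_{s,t}=\Phi_{s,t}(y)$ and \eqref{eq:34} one reads off $\abs{y_{s,t}-\widehat{\phi}_{t,s}(y_s)}\leq A\varpi(\omega_{s,t})$, i.e.\ $y\in\cP[\phi,a,A]$ with the explicit constant $A$ of \eqref{eq:38}. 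The content of this direction is thus only the identification of the constant, not a from-scratch regularity proof.

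Your proposed workaround --- rerunning the Davie induction of Lemma~\ref{lem:2} to show $y|_\pi\in\cP_\pi[\phi,a,K']$ without assuming it --- fails because that induction needs the base case $U_{i,i+1}=0$, which holds for the numerical scheme ($y_{i+1}=\phi_{i+1,i}(y_i)$ exactly) but not for an arbitrary continuous path satisfying only the limiting fixed-point relation. Likewise Proposition~\ref{prop:2}, which you invoke, is stated for $y\in\cP_\pi[\phi,a,K]$ and its proof uses that hypothesis through \eqref{eq:defO}; you cannot appeal to it before you have the D-solution bound. So the detour is both unnecessary and, as sketched, circular in a different place.
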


\begin{proof}
    If $y\in\cP[\phi,a]$, both $\Set{y_{s,t}\eqdef y_t-y_s}_{(s,t)\in\TT^+_2}$ and $\Set{\Phi_{s,t}}_{s,t\in\rTT^2}$
    are additive functionals satisfying $\abs{z_{s,t}-\widehat{\phi}_{t,s}(z_s)}\leq C\varpi(\omega_{s,t})$
    for $(s,t)\in\pi_+^2$.
    From the Additive Sewing Lemma (see \textit{e.g.} \cite{lyons98a,gub04} or
\cite[Theorem~1, p.~25]{feyel} or \cite[Lemma~4.2 p.~51]{friz14a}), they are equal. Conversely, 
    if $y=\Phi(y)$, then with~\eqref{eq:34}, $\abs{y_{s,t}-\widehat{\phi}_{s,t}(y)}
    \leq A\varpi(\omega_{s,t})$ meaning that $y\in\cP[\phi,a,A]$.
\end{proof}

\subsection{Stability and convergence of discrete approximations}

We recover the general principle that consistency and stability yield convergence, 
as well as existence and uniqueness.
For this, we need a stronger hypothesis on $\Phi^\pi$.
We will show in Section~\ref{sec:stable-almost-flows}
that this hypothesis is satisfied in presence of \emph{stable almost flows}, as defined in \cite{brault2}. 

\begin{hypothesis}[Stability]
\label{hyp:stability}
Let $\phi\in\cA[\delta,M]$. 
Let $\Phi$ and $\Set{\Phi^\pi}_{\pi}$ be the associated functionals
given in Corollary~\ref{cor:3} and Proposition~\ref{prop:2}.
Assume that for each partition $\pi$ of $\TT$,
$\Phi^\pi$ is Lipschitz continuous on $\cP_\pi[\phi,a,L]$ with a constant $\ell<1$ which is uniform in $\pi$.
\end{hypothesis}

Thanks to the Lipschitz inverse function theorem, 
Hypothesis~\ref{hyp:stability} implies that $\mathrm{Id}-\Phi^\pi$ is invertible 
with a bounded inverse which is uniform in $\pi$. This is \emph{stability}.
We use in Corollary~\ref{cor:stability} below such a property on perturbations.

We now give the rate of convergence of numerical scheme. 
Applied to YDE and RDE (see \cite{brault1,brault2}), we recover the already found rates of convergence:
\begin{itemize}
    \item In \cite{davie05a}, $\varpi(x)=x^{\gamma/p}$ for a vector field in $\cC^\gamma$ and $x$ of finite $p$-variation, 
	$2\leq p<3$ and $1+\gamma>p$, see Remarks~1 and~3. Our estimate is a upper bound for the right-hand side of (9), namely 
    a rate of $\gamma/p-1$.
\item In \cite[Theorem~10.3.3]{friz}, a high order expansion of order $n$ for a rough path of finite $p$-variation, $2\leq p<3$,
    is given with $\varpi(x)=x^{(n+1)/p}$ for a vector field of class $\cC^{\gamma}$, $\gamma>p$ and $n=\lfloor \gamma\rfloor\geq\lfloor p\rfloor$.  The rate of convergence is $(n+1)/p-1$.
\item In \cite[Sect.~5, p.1789]{lejay10a}, for YDE ($1\leq p<2$) with a vector field of class $\cC^{\gamma}$, $1+\gamma>p$, 
    the rate of convergence is $2/p-1$ with $\varpi(x)=x^{2/p}$.
\end{itemize}

\begin{proposition}[Rate of convergence of approximations]
    \label{prop:convergence}
    Assume Hypothesis~\ref{hyp:stability} on stability.
    For each partition $\pi$ of $\TT$, 
    let $y^\pi$ be the numerical scheme associated to~$\phi$ with respect
    to~$\pi$ (see Definition~\ref{def:numerical-scheme}).

    Then, for any $y\in\cP[\phi,a,L]$, 
    \begin{equation}
	\label{eq:36b}
	\normsup{y^\pi-y}\leq \frac{A}{1-\ell}\mu_{0,T}(\pi)\omega_{0,T},
    \end{equation}
    where $\mu$ is defined in \eqref{eq:mu} and $A$ defined by
    \eqref{eq:38} with $K=L$.
    Besides, $\Set{y^\pi}_{\pi}$ is a Cauchy sequence with respect to $\normsup{\cdot}$ as $\mesh{\pi}\to 0$ with 
    \begin{equation}
	\label{eq:36}
	\normsup{y^\sigma-y^\pi}\leq \frac{2A}{1-\ell}\max\Set{\mu_{0,T}(\pi),\mu_{0,T}(\sigma)}\omega_{0,T}
    \end{equation}
    for any two partitions $\sigma$ and $\pi$ of $\TT$. 
    In consequence, $\cP[\phi,a]=\Set{y}$ with 
    $y=\lim_\pi y^\pi$.
\end{proposition}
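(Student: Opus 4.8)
The plan is to exploit the fixed-point characterization from Proposition~\ref{prop:3} together with the consistency estimate \eqref{eq:35} and the stability Hypothesis~\ref{hyp:stability}, in the classical Lax pattern: consistency $+$ stability $\Rightarrow$ convergence. First I would fix $y\in\cP[\phi,a,L]$ (Lemma~\ref{lem:1} tells us every D-solution lies in such a class). By Remark~\ref{rem:5}, the numerical scheme satisfies $y^\pi_j=a+\Phi^\pi_{0,j}(y^\pi)$ for $j\in\pi$, i.e.\ $y^\pi=\Phi^\pi(y^\pi)$ on the partition points (extended by interpolation). By Proposition~\ref{prop:3}, the genuine D-solution satisfies $y=\Phi(y)$, and by consistency \eqref{eq:35}, $y=\Phi^\pi(y)+\epsilon^\pi$ where $\normsup{\epsilon^\pi}\leq A\,\mu_{0,T}(\pi)\,\omega_{0,T}$. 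Here one should be slightly careful: $\Phi^\pi$ is a priori defined on $\cP_\pi[\phi,a,L]$, and one needs to know that $y$ (projected to $\pi$) lies in that set — this follows from Notation~\ref{not:5} and Lemma~\ref{lem:1}, since a continuous D-solution restricts to a discrete one with the same constant.

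Next I would subtract the two fixed-point relations: writing $w^\pi\eqdef y^\pi-y$ (as elements of $\cP_\pi[\phi,a,L]$ with the $\normsup{\cdot}$ norm), we get
\begin{equation*}
    w^\pi = y^\pi - y = \Phi^\pi(y^\pi) - \Phi^\pi(y) - \epsilon^\pi,
\end{equation*}
hence $\normsup{w^\pi}\leq \ell\,\normsup{w^\pi}+\normsup{\epsilon^\pi}$ by the uniform Lipschitz bound $\ell<1$ of Hypothesis~\ref{hyp:stability}. Since $\ell<1$, this rearranges to $\normsup{w^\pi}\leq \normsup{\epsilon^\pi}/(1-\ell)\leq \frac{A}{1-\ell}\mu_{0,T}(\pi)\omega_{0,T}$, which is exactly \eqref{eq:36b}. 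For \eqref{eq:36}, I would not compare $y^\sigma$ and $y^\pi$ directly (their defining functionals live on different partitions); instead I would first establish that $\cP[\phi,a]$ is non-empty — but actually the cleanest route is to apply the triangle inequality through \emph{any} fixed $y\in\cP[\phi,a,L]$ once we know one exists, giving $\normsup{y^\sigma-y^\pi}\leq \normsup{y^\sigma-y}+\normsup{y^\pi-y}\leq \frac{2A}{1-\ell}\max\{\mu_{0,T}(\sigma),\mu_{0,T}(\pi)\}\omega_{0,T}$ from \eqref{eq:36b}. To avoid the circularity of assuming $\cP[\phi,a]\neq\emptyset$, note that \eqref{eq:36} can alternatively be proved \emph{unconditionally} by running the subtraction argument between $y^\pi$ and $y^{\pi\cup\sigma}$ (both restricted to $\pi$, where both are fixed points of $\Phi^\pi$ up to a consistency error), then similarly for $y^\sigma$; this is the approach I would actually carry out, since it makes $\Set{y^\pi}_\pi$ a Cauchy sequence in the complete space $(\cC(\TT,\uV),\normsup{\cdot})$ with no prior existence assumption.

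From the Cauchy property, $y^\pi$ converges in $\normsup{\cdot}$ to some $y^*\in\cC(\TT,\uV)$ as $\mesh{\pi}\to 0$. Since each $y^\pi\in\cP_\pi[\phi,a,L]$, Lemma~\ref{lem:convergence} gives $y^*\in\cP[\phi,a,L]$; in particular $\cP[\phi,a]\neq\emptyset$. For uniqueness, if $y\in\cP[\phi,a]$ then $y\in\cP[\phi,a,L]$ by Lemma~\ref{lem:1}, so \eqref{eq:36b} applies to it and gives $\normsup{y^\pi-y}\to 0$, whence $y=y^*$ by uniqueness of limits. Therefore $\cP[\phi,a]=\{y^*\}$ with $y^*=\lim_\pi y^\pi$, completing the proof. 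The main obstacle, and the point requiring genuine care rather than routine estimation, is the bookkeeping around domains: $\Phi^\pi$ is only defined on discrete D-solutions, so each application of its Lipschitz bound and each subtraction must be justified by checking that the relevant paths, suitably projected via Notation~\ref{not:5}, genuinely lie in $\cP_\pi[\phi,a,L]$ — and one must handle the Cauchy estimate \eqref{eq:36} in a way that does not presuppose the existence it is meant to help establish.
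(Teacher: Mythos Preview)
Your proposal is correct and follows essentially the same route as the paper: write $y^\pi-y=\Phi^\pi(y^\pi)-\Phi^\pi(y)-\epsilon^\pi$ with $\epsilon^\pi=\Phi(y)-\Phi^\pi(y)$, apply the uniform Lipschitz bound $\ell<1$ and the consistency estimate \eqref{eq:35} to obtain \eqref{eq:36b}, then handle \eqref{eq:36} by comparing $y^\pi$ with the scheme on a common refinement (the paper phrases this as nested partitions and then a triangle inequality through a refinement $\tau\subset\pi,\sigma$), and finally invoke Lemma~\ref{lem:convergence} for the limit. Your explicit care about proving the Cauchy estimate \eqref{eq:36} without presupposing $\cP[\phi,a]\neq\emptyset$ matches exactly what the paper does implicitly when it replaces $y$ by $y^\sigma$ in the nested case.
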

\begin{proof} From Proposition~\ref{prop:3}, Definition~\ref{def:numerical-scheme} and Remark~\ref{rem:5},
    $y\in\cP[\phi,a,L]$ and $y^\pi\in\cP_{\pi}[\phi,a,L]$ are respectively fixed point solutions to 
    \begin{equation*}
	y_{s,t}=\Phi_{s,t}(y),\ \forall (s,t)\in\rTT^2
	\text{ and }y^\pi_{s,t}=\Phi_{s,t}^\pi(y^\pi),\ \forall (s,t)\in\rpi^2,
    \end{equation*}
    where $\Phi$ is given by Corollary~\ref{cor:3}.
    For $(s,t)\in\rpi^2$, as $\cP[\phi,A,L]\subset \cP_\pi[\phi,A,L]$ (recall Notation~\ref{not:5}), 
    \begin{equation*}
	y^\pi_{s,t}-y_{s,t}=\Phi^\pi_{s,t}(y^\pi)-\Phi^\pi_{s,t}(y)
	+\epsilon^\pi_{s,t}
	\text{ with }\epsilon^\pi_{s,t}\eqdef \Phi^\pi_{s,t}(y)-\Phi_{s,t}(y).
    \end{equation*}
    Hence, from Hypothesis~\ref{hyp:stability},
    \begin{equation*}
	\abs{y^\pi_{s,t}-y_{s,t}}\leq \ell\normsup{y^\pi-y}+\abs{\epsilon^\pi_{s,t}}.
    \end{equation*}
    With \eqref{eq:35} in Corollary~\ref{cor:3} and since $y^\pi_0=y_0$,
    \begin{equation*}
	\normsup{y^\pi-y}\leq \ell\normsup{y^\pi-y}+A\mu_{0,T}(\pi)\omega_{0,T}.
    \end{equation*}
    As the uniform Lipschitz constant of $\Phi^\pi$ satisfies $\ell<1$ from Hypothesis~\ref{hyp:stability}, this proves \eqref{eq:36b}.

    If $z,y\in\cP[\phi,a,K]$, then for any partition $\pi$ of $\TT$, 
    $\normsup{y-z}\leq \normsup{y-y^\pi}+\normsup{z-y^\pi}$,
    so that $y=z$. This proves uniqueness since $\mu_{0,T}(\pi)$ defined by \eqref{eq:mu}
    decreases to $0$ with the mesh of $\pi$.

    To prove \eqref{eq:36}, we consider first two nested partitions $\pi$ and $\sigma\subset \pi$.
    We proceed as above with $y$ replaced by $y^\sigma$. When $\sigma$ and $\pi$ are arbitrary partitions, 
    then there exists a partition $\tau$ such that $\tau\subset\sigma$ and $\tau\subset\pi$. 
    The triangle inequality yields~\eqref{eq:36}. That $\cP[\phi,a]$ is contains
    only the limit of $\Set{y^\pi}_{\pi}$ follows from Lemma~\ref{lem:convergence}, 
    since $y^\pi\in\cP_\pi[\phi,a,L]$ from Lemma~\ref{lem:2}.
\end{proof}

\subsection{Stable almost flows and continuity}

\label{sec:stable-almost-flows}

We give now a sufficient condition to ensure Hypothesis~\ref{hyp:stability}.
The notion of stable almost flow was introduced in \cite{brault2}.

\begin{notation}[Ratio bound]
    For $\generalnorm{\star}{\cdot}$ being either $\normsup{\cdot}$ or $\normlip{\cdot}$, we define
    for \mbox{$\phi:\rTT^2\to\cC(\uV,\uV)$}, 
    \begin{equation*}
	\generalnorm{\star\div\varpi}{\phi}\eqdef \sup_{\substack{(r,t)\in\rTT^2\\r\neq t}}
    \frac{\generalnorm{\star}{\phi_{t,r}}}{\varpi(\omega_{r,t})}
    \text{ and }
	\generalnorm{\star\div\varpi}{\diff\phi}\eqdef \sup_{\substack{(r,s,t)\in\rTT^3\\r\neq t}}
    \frac{\generalnorm{\star}{\diff\phi_{t,s,r}}}{\varpi(\omega_{r,t})}.
    \end{equation*}
\end{notation}

\begin{definition}[Stable almost flow]
    A \emph{stable almost flow} is an almost flow
    $\phi\in\cA[\delta,M]$  with $\normlip{\phi_{t,s}-\id}\leq \delta_T$
    which satisfies
    \begin{gather}
	\label{eq:saf:1}
	\normlipvarpi{\diff \phi}<+\infty, 
    \end{gather}
    as well as the \emph{4-points control} 
    \begin{multline}
	\label{eq:4pc}
	\abs{\phi_{t,s}(a)-\phi_{t,s}(b)-\phi_{t,s}(c)+\phi_{t,s}(d)}
	\\
	\leq 
	\widecheck{\phi}_{t,s}\Paren[\big]{\abs{a-b}\vee\abs{c-d}}
	\times \Paren[\big]{\abs{a-c}\vee\abs{b-d}}+(1+\delta_T)\abs{a-b-c+d},
    \end{multline}
    where for any $\alpha\geq 0$, 
    \begin{equation*}
	\widecheck{\phi}_{t,s}(\alpha\varpi(\omega_{r,s}))\leq \phi^{\circledast}(\alpha)\varpi(\omega_{r,t}),
	\ \forall (r,s,t)\in\rTT^3,
    \end{equation*}
    for $\phi^{\circledast}\geq 0$ that depends on $\alpha$ and $\omega_{0,T}$.
    Let us denote by $\cSA$ the subset of $\cA$ of stable almost flows.
\end{definition}
\begin{proposition}
    \label{prop:stability}
    Let $\phi\in\cSA$ be a stable almost flow. 
    Then the corresponding functional $\Phi^\pi$ given by Corollary~\ref{cor:3} 
    satisfies
    \begin{multline*}
	\sup_{(t_i,t_j)\in\rpi^2}
	\abs{\Phi^\pi_{i,j}(y)-\Phi^\pi_{i,j}(z)}
	\leq \ell_T\normsup{y-z}
	\\
	\text{ with }
	\ell_T \eqdef
	\delta_T+\frac{\normlipvarpi{\diff \phi}+(1+\delta_T)(2+\delta_T)+\phi^\circledast(K)
}{1-\varkappa}\varpi(\omega_{0,T}) 
    \end{multline*}
    for any $y,z\in\cP_\pi[\phi,a,K]$. In particular, $\ell_T\xrightarrow[T\to0]{}0$.
\end{proposition}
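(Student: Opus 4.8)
The plan is to estimate $\Phi^\pi_{i,j}(y)-\Phi^\pi_{i,j}(z)$ directly from its definition as the telescoping sum $\sum_{k=i}^{j-1}\bigl(\widehat{\phi}_{k+1,k}(y_k)-\widehat{\phi}_{k+1,k}(z_k)\bigr)$, by reproducing the Davie-type induction used in the proof of Proposition \ref{prop:2}, but now applied to the \emph{difference} functional. First I would set $W_{i,j}\eqdef\abs{\Phi^\pi_{i,j}(y)-\Phi^\pi_{i,j}(z)-\bigl(\widehat{\phi}_{j,i}(y_i)-\widehat{\phi}_{j,i}(z_i)\bigr)}$; since $\Phi^\pi$ is additive on $\pi$ (relation \eqref{eq:20}), $W$ will satisfy a subadditive-type inequality $W_{i,k}\leq W_{i,j}+W_{j,k}+(\text{error})_{i,k}$ provided the error term is controlled by a multiple of $\varpi(\omega_{i,k})$. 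The analogue of \eqref{eq:20bis}–\eqref{eq:19} now involves a second-order difference of $\widehat{\phi}_{k,j}$ evaluated at the four points $y_j$, $z_j$, $y_i+\widehat{\phi}_{j,i}(y_i)$, $z_i+\widehat{\phi}_{j,i}(z_i)$, together with the increments of $\diff\phi$. This is precisely where the \emph{4-points control} \eqref{eq:4pc} enters: it bounds that second-order difference by $\widecheck\phi_{k,j}(\cdot)\times(\cdot)+(1+\delta_T)\abs{\cdot}$, and the scaling hypothesis on $\widecheck\phi$ converts it into $\phi^\circledast(K)\varpi(\omega_{i,k})$ once we use that $y,z\in\cP_\pi[\phi,a,K]$ controls $\abs{y_j-y_i-\widehat\phi_{j,i}(y_i)}$ and similar quantities by $K\varpi$.

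Concretely, the key steps in order are: (i) expand $\widehat{\phi}_{k,j}(y_j)+\widehat{\phi}_{j,i}(y_i)-\widehat{\phi}_{k,i}(y_i)$ as in \eqref{eq:20bis}, do the same for $z$, and subtract, producing a $\diff\phi$-difference term and a second-order difference of $\widehat{\phi}_{k,j}$; (ii) bound the $\diff\phi$-difference term using $\normlipvarpi{\diff\phi}<\infty$ and $\normsup{y-z}$, giving a contribution $\normlipvarpi{\diff\phi}\,\normsup{y-z}\,\varpi(\omega_{i,k})$; (iii) bound the second-order difference of $\widehat{\phi}_{k,j}=\phi_{k,j}-\id$ via \eqref{eq:4pc}: the "product" part contributes, using the $K$-control of D-solutions, a term $\phi^\circledast(K)\normsup{y-z}\,\varpi(\omega_{i,k})$, while the "$(1+\delta_T)\abs{a-b-c+d}$" part is handled by noting $a-b-c+d$ here telescopes into increments bounded by $(2+\delta_T)\normsup{y-z}$ up to $\varpi$-factors, giving $(1+\delta_T)(2+\delta_T)\normsup{y-z}\,\varpi(\omega_{i,k})$; (iv) assemble these into $W_{i,k}\leq W_{i,j}+W_{j,k}+C_T\normsup{y-z}\,\varpi(\omega_{i,k})$ with $C_T=\normlipvarpi{\diff\phi}+(1+\delta_T)(2+\delta_T)+\phi^\circledast(K)$; (v) invoke the Davie lemma \cite[Lemma~9]{brault2} exactly as in Proposition \ref{prop:2} to get $W_{i,j}\leq \tfrac{C_T}{1-\varkappa}\normsup{y-z}\,\varpi(\omega_{i,j})$; (vi) finally bound $\abs{\Phi^\pi_{i,j}(y)-\Phi^\pi_{i,j}(z)}\leq W_{i,j}+\abs{\widehat{\phi}_{j,i}(y_i)-\widehat{\phi}_{j,i}(z_i)}$ and use $\normlip{\widehat\phi_{j,i}}=\normlip{\phi_{j,i}-\id}\leq\delta_T$ for the last term, producing the stated $\ell_T=\delta_T+\tfrac{C_T}{1-\varkappa}\varpi(\omega_{0,T})$, with $\varpi(\omega_{i,j})\leq\varpi(\omega_{0,T})$ by monotonicity.

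The main obstacle I expect is step (iii), specifically tracking which four points play the roles of $a,b,c,d$ in the 4-points control and verifying that the "mixed" increment $a-b-c+d$ genuinely telescopes into a quantity of size $\normsup{y-z}$ times $\varpi$ — one must be careful that $a-b-c+d = (y_j-z_j) - \bigl((y_i+\widehat\phi_{j,i}(y_i))-(z_i+\widehat\phi_{j,i}(z_i))\bigr)$ and rewrite this using the D-solution bounds plus the Lipschitz estimate on $\widehat\phi_{j,i}$, so that the $\delta_T$ and the $\varpi(\omega_{i,j})/\varpi(\omega_{i,k})$ ratios combine correctly under Global Hypothesis \ref{hyp:4}. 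The constant $\varkappa$ appears exactly because the induction splits $\omega_{i,k}$ into two halves, each contributing $\varpi(\omega/2)\leq\tfrac{\varkappa}{2}\varpi(\omega)$; this is the same bookkeeping as in Lemma \ref{lem:2} and Proposition \ref{prop:2}, so no new idea is needed there. The limit $\ell_T\to0$ as $T\to0$ is then immediate since $\delta_T\to0$ and $\varpi(\omega_{0,T})\to0$.
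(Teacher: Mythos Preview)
Your proposal is essentially the paper's proof: your $W_{i,j}$ is the absolute value of the paper's $V_{i,j}$, the expansion via \eqref{eq:20bis} and the application of the 4-points control \eqref{eq:4pc} with $(a,b,c,d)=(y_j,\phi_{j,i}(y_i),z_j,\phi_{j,i}(z_i))$ are exactly what the paper does, and the conclusion follows from the same Davie lemma \cite[Lemma~9]{brault2} together with the Lipschitz bound $\normlip{\widehat\phi_{j,i}}\leq\delta_T$. Your identification of step~(iii) as the delicate point is accurate and matches the paper's treatment, where the leftover identity term $a-b-c+d$ combines with the $(1+\delta_T)\abs{a-b-c+d}$ from \eqref{eq:4pc} to give the $(2+\delta_T)$ coefficient.
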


\begin{proof} Consider $y,z\in\cP_\pi[\phi,a,K]$. Let us set
    \begin{equation}
	\label{eq:24}
	V_{i,j}
	\eqdef
	\Phi_{i,j}^\pi(y)-\widehat{\phi}_{j,i}(y_i)
	-\Phi_{i,j}^\pi(z)+\widehat{\phi}_{j,i}(z_i).
    \end{equation}
    As $\phi_{j,i}=\id+\widehat{\phi}_{j,i}$, we rewrite \eqref{eq:24} as
    \begin{equation*}
	V_{i,j}=
\Phi_{i,j}^\pi(y)-\Phi_{i,j}^\pi(z)
+\phi_{j,i}(y_i) -\phi_{j,i}(z_i)+y_i-z_i.
    \end{equation*}
    Using \eqref{eq:20} in the proof of Proposition~\ref{prop:2}, 
    \begin{multline*}
	V_{i,j}+V_{j,k}-V_{i,k}
	=
\phi_{j,i}(y_i) -\phi_{j,i}(z_i)+y_j-z_j
+
\phi_{k,j}(y_j) -\phi_{k,j}(z_j)
-\phi_{k,i}(y_i) +\phi_{k,i}(z_i)
\\
=
\phi_{j,i}(y_i)
-\phi_{j,i}(z_i)
+y_j-z_j
+\diff\phi_{k,j,i}(y_i)
-\diff\phi_{k,j,i}(z_i)
\\
+\phi_{k,j}(y_j)-\phi_{k,j}(\phi_{j,i}(y_i))
-\phi_{k,j}(z_j)+\phi_{k,j}(\phi_{j,i}(z_i)).
    \end{multline*}
    Since $\phi$ is stable almost flow, 
    the 4-points control \eqref{eq:4pc} on $\phi$ yields
    \begin{multline*}
	\abs{V_{i,j}+V_{j,k}-V_{i,k}}
	\\
	\leq 
	\widecheck{\phi}_{k,j}\Paren[\big]{
	    \abs{y_{j}-\phi_{j,i}(y_i)}\vee\abs{z_{j}-\phi_{j,i}(z_i)}
	}
	\times
	\Paren[\big]{
	    \abs{y_j-z_j}\vee
	    \abs{\phi_{j,i}(y_i)-\phi_{j,i}(z_i)}
	}
	\\
	+(2+\delta_T)\abs{y_j-z_j-\phi_{j,i}(y_i)+\phi_{j,i}(z_i)}
	+\normlipvarpi{\diff \phi}\abs{y_i-z_i}\varpi(\omega_{i,k})
	\\
	\leq B\normsup{y-z}\varpi(\omega_{i,k})
    \end{multline*}
    for $(i,j,k)\in\rpi^+$, where
    \begin{equation}
	\label{eq:B}
	B\eqdef\normlipvarpi{\diff
          \phi}+(1+\delta_T)(2+\delta_T)+(1\vee\delta_T) \phi^\circledast(K).
    \end{equation}
    Moreover, $V_{i,i}=V_{i,i+1}=0$. From the Davie lemma (Lemma~9 in~\cite{brault2})
    with $U_{i,j}\eqdef\abs{V_{i,j}}$,  
    \begin{equation*}
	\abs{V_{i,j}}\leq \frac{2B}{1-\varkappa}\normsup{y-z}\varpi(\omega_{i,j}),\ \forall (i,j)\in\rpi^+.
    \end{equation*}
    Hence, 
    \begin{multline*}
	\abs{\Phi^\pi_{i,j}(y)-\Phi^\pi_{i,j}(z)}
	\leq \normlip{\widehat{\phi}_{j,i}}\normsup{y-z}+\frac{B}{1-\varkappa}\normsup{y-z}\varpi(\omega_{i,k})
	\\
	\leq 
	\Paren*{\delta_T+\frac{2B}{1-\varkappa}\varpi(\omega_{0,T})} \normsup{y-z}.
    \end{multline*}
    This proves the result.
\end{proof}

\begin{corollary}
    \label{cor:5}
    Let $\phi\in\cSA$ be a stable almost flow. Then for $T$ small enough, 
    Hypothesis~\ref{hyp:stability} is satisfied and thus~\eqref{eq:36} holds true.
\end{corollary}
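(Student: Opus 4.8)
The plan is to derive Corollary~\ref{cor:5} directly from Proposition~\ref{prop:stability}, the only missing ingredient being the verification that the uniform Lipschitz constant can be made strictly less than $1$ by shrinking the time horizon. First I would recall that Proposition~\ref{prop:stability} gives, for a stable almost flow $\phi\in\cSA$ and any $y,z\in\cP_\pi[\phi,a,K]$,
\begin{equation*}
\sup_{(t_i,t_j)\in\rpi^2}\abs{\Phi^\pi_{i,j}(y)-\Phi^\pi_{i,j}(z)}\leq \ell_T\normsup{y-z},
\end{equation*}
with $\ell_T=\delta_T+\frac{\normlipvarpi{\diff\phi}+(1+\delta_T)(2+\delta_T)+\phi^\circledast(K)}{1-\varkappa}\varpi(\omega_{0,T})$, a constant that does not depend on $\pi$. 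The point is that $\ell_T$ is expressed purely in terms of the ambient data on $\TT=[0,T]$: we have $\delta_T\to 0$ as $T\to 0$ since $\delta$ is non-decreasing with $\delta(0)=0$, and $\varpi(\omega_{0,T})\to 0$ as $T\to 0$ since $\omega$ is continuous near its diagonal with $\omega_{0,0}=0$ and $\varpi$ is non-decreasing with $\varpi(0)=0$. The quantities $\normlipvarpi{\diff\phi}$ and $\phi^\circledast(K)$ are finite by the definition of a stable almost flow (here $\phi^\circledast(K)$ depends on $\omega_{0,T}$, but it stays bounded as $T\to 0$), and $\varkappa\in(0,1)$ is fixed. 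Hence $\ell_T\to 0$ as $T\to 0$, as already noted at the end of Proposition~\ref{prop:stability}.

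Next I would fix $K=L$, with $L$ given by \eqref{eq:L}, so that $\cP_\pi[\phi,a,L]$ is exactly the space on which $\Phi^\pi$ is considered in Hypothesis~\ref{hyp:stability} (recall from Lemma~\ref{lem:2} that the numerical scheme and in fact all discrete D-solutions lie in this set). Then choose $T$ small enough that $\ell_T<1$; with this choice, $\Phi^\pi$ is Lipschitz continuous on $\cP_\pi[\phi,a,L]$ with constant $\ell\eqdef\ell_T<1$, and crucially this constant is uniform in $\pi$ because $\ell_T$ in Proposition~\ref{prop:stability} carries no dependence on the partition. This is precisely the content of Hypothesis~\ref{hyp:stability}. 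Consequently all the conclusions that were derived under Hypothesis~\ref{hyp:stability} apply: by Proposition~\ref{prop:convergence}, the numerical schemes $\Set{y^\pi}_\pi$ form a Cauchy sequence for $\normsup{\cdot}$ with the rate \eqref{eq:36}, and $\cP[\phi,a]=\Set{y}$ for $y=\lim_\pi y^\pi$.

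The proof is therefore short: it is a matter of matching the constant $\ell_T$ of Proposition~\ref{prop:stability} against the threshold $1$ required by Hypothesis~\ref{hyp:stability}, then invoking Proposition~\ref{prop:convergence}. The only substantive point — and the one I would state carefully — is the uniformity in $\pi$: one must observe that the bound in Proposition~\ref{prop:stability} holds with a single constant $\ell_T$ valid for every partition $\pi$, so that passing to $\ell_T<1$ yields a genuinely $\pi$-uniform contraction constant. There is no real obstacle here beyond bookkeeping; any subtlety is in making sure that Hypothesis~\ref{hyp:time} (namely $\varkappa+2\delta_T<1$) remains compatible with the extra smallness requirement $\ell_T<1$, which it is, since both are satisfied for all sufficiently small $T$.
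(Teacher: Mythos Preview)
Your proposal is correct and follows exactly the approach the paper intends: the corollary is stated without proof, as an immediate consequence of the fact that $\ell_T\to 0$ in Proposition~\ref{prop:stability} together with Proposition~\ref{prop:convergence}. Your additional remarks on uniformity in $\pi$ and compatibility with Hypothesis~\ref{hyp:time} are the right points to make explicit.
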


\subsection{Continuity results for stable almost flows}
\label{sec:continuity}

The next proposition is a discrete version of \cite[Proposition~10]{brault2}
on the distance between two numerical schemes, one associated to a stable almost flow.
The proof is close to the one of Proposition~\ref{prop:stability}. The next 
result is the key to prove generic conditions.

\begin{notation}[Distance on almost flows]
    \label{not:4}
    For $\phi,\psi\in\cA[\delta,M]$, we define 
using~\eqref{eq:def:1}, \eqref{eq:def:5} and \eqref{eq:phitsr}, 
\begin{equation*}
    \label{eq:def:4}
    d_{\cA}(\phi,\psi)\eqdef 
    \max\Set*{d_\infty(\phi,\psi),\normO{\phi-\psi},\normsupvarpi{\diff\phi-\diff\psi}}.
\end{equation*}

\end{notation}

\begin{proposition}
    \label{prop:stability:schemes}
Let $\phi\in\cSA\cap\cA[\delta,M]$ 
be a stable almost flow and $\psi\in\cA[\delta,M]$ be an almost flow.

Consider a partition $\pi=\Set{t_i}_{i=0}^n$ of $\TT$. 
Let $y^\pi$ and $z^\pi$ be the numerical schemes associated to $\phi$ and 
$\psi$ with $y^\pi_0=a$ and $z^\pi_0=b$.

Then there exists a time $T$ small enough and  constants $C$ and $C'$ that depend only on 
$L$ given by \eqref{eq:L}, 
$\phi^{\circledast}(L)$, $\delta$, $\varkappa$ and $\generalnorm{\Lip,\varpi}{\phi}$
such that 
\begin{gather*}
    \abs{y^\pi_j-\phi_{j,i}(y^\pi_i)-z^\pi_j+\psi_{j,i}(z^\pi_i)}
    \leq Cd_\cA(\phi,\psi)\varpi(\omega_{i,j}),\ \forall (i,j)\in\rpi^+,
    \\
    \normsup{y^\pi-z^\pi}\leq Cd_{\cA}(\phi,\psi)+C'\abs{a-b}.
\end{gather*}
\end{proposition}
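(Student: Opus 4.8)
The strategy mirrors the proof of Proposition~\ref{prop:stability}, but now the two schemes are driven by \emph{different} almost flows, so the discrepancy between $\phi$ and $\psi$ measured by $d_\cA(\phi,\psi)$ must be carried through every estimate. Set
\begin{equation*}
    V_{i,j}\eqdef y^\pi_j-\phi_{j,i}(y^\pi_i)-z^\pi_j+\psi_{j,i}(z^\pi_i).
\end{equation*}
Since $y^\pi$ and $z^\pi$ are the numerical schemes, $y^\pi_{i+1}=\phi_{i+1,i}(y^\pi_i)$ and $z^\pi_{i+1}=\psi_{i+1,i}(z^\pi_i)$, hence $V_{i,i}=V_{i,i+1}=0$; by Lemma~\ref{lem:2} both schemes lie in $\cP_\pi[\cdot,\cdot,L]$ with $L$ from \eqref{eq:L}. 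The plan is to obtain the "almost-additivity" estimate $\abs{V_{i,j}+V_{j,k}-V_{i,k}}\leq C\, d_\cA(\phi,\psi)\,\normsup{y^\pi-z^\pi}\,\varpi(\omega_{i,k})$ plus a term linear in $d_\cA(\phi,\psi)\varpi(\omega_{i,k})$, feed it into the Davie lemma (Lemma~9 of \cite{brault2}), and then close a fixed-point-type inequality for $\normsup{y^\pi-z^\pi}$ exactly as in Proposition~\ref{prop:convergence}.

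\textbf{Step 1: decompose $V_{i,j}+V_{j,k}-V_{i,k}$.} Using $\phi_{t,s}=\id+\widehat\phi_{t,s}$ and the telescoping as in \eqref{eq:39}/\eqref{eq:20}, write
\begin{multline*}
    V_{i,j}+V_{j,k}-V_{i,k}
    =\Bigl[\phi_{k,j}(y^\pi_j)-\phi_{k,j}(\phi_{j,i}(y^\pi_i))-\psi_{k,j}(z^\pi_j)+\psi_{k,j}(\psi_{j,i}(z^\pi_i))\Bigr]\\
    +\diff\phi_{k,j,i}(y^\pi_i)-\diff\psi_{k,j,i}(z^\pi_i).
\end{multline*}
For the second line, split $\diff\phi_{k,j,i}(y^\pi_i)-\diff\psi_{k,j,i}(z^\pi_i)=\bigl(\diff\phi_{k,j,i}(y^\pi_i)-\diff\psi_{k,j,i}(y^\pi_i)\bigr)+\bigl(\diff\psi_{k,j,i}(y^\pi_i)-\diff\psi_{k,j,i}(z^\pi_i)\bigr)$; the first piece is bounded by $\normsupvarpi{\diff\phi-\diff\psi}\,\varpi(\omega_{i,k})\leq d_\cA(\phi,\psi)\varpi(\omega_{i,k})$ and the second by $\normlipvarpi{\diff\psi}\,\abs{y^\pi_i-z^\pi_i}\varpi(\omega_{i,k})$ (with $\normlipvarpi{\diff\psi}\leq\normlipvarpi{\diff\phi}+d_\cA(\phi,\psi)$). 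For the first line, insert $\pm\phi_{k,j}(z^\pi_j)\mp\phi_{k,j}(\psi_{j,i}(z^\pi_i))$ to reduce to (a) a pure fourth-difference of $\phi_{k,j}$ on the four points $y^\pi_j,\phi_{j,i}(y^\pi_i),z^\pi_j,\psi_{j,i}(z^\pi_i)$, handled by the 4-points control \eqref{eq:4pc} of the \emph{stable} almost flow $\phi$, together with the bounds $\abs{y^\pi_j-\phi_{j,i}(y^\pi_i)}\leq L\varpi(\omega_{i,j})$ and $\abs{z^\pi_j-\psi_{j,i}(z^\pi_i)}=0$ (numerical scheme!) — so the product term collapses — and with the fourth-difference quantity $\abs{y^\pi_j-z^\pi_j-\phi_{j,i}(y^\pi_i)+\psi_{j,i}(z^\pi_i)}=\abs{V_{i,j}}$ giving the $(1+\delta_T)\abs{V_{i,j}}$ contribution (which, by the induction built into the Davie lemma, is absorbed); and (b) a term $\bigl(\phi_{k,j}-\psi_{k,j}\bigr)(z^\pi_j)-\bigl(\phi_{k,j}-\psi_{k,j}\bigr)(\psi_{j,i}(z^\pi_i))$, bounded by $\osc(\widehat\phi_{k,j}-\widehat\psi_{k,j},L\varpi(\omega_{i,j}))\leq\normO{\phi-\psi}\,\delta_T(1+L)\varpi(\omega_{i,k})\leq d_\cA(\phi,\psi)\,\delta_T(1+L)\varpi(\omega_{i,k})$ via \eqref{eq:defO}.

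\textbf{Step 2: apply Davie's lemma and close the loop.} Collecting Step 1, $V$ satisfies $V_{i,i}=V_{i,i+1}=0$ and $\abs{V_{i,j}+V_{j,k}-V_{i,k}}\leq \bigl(B_1\normsup{y^\pi-z^\pi}+B_2 d_\cA(\phi,\psi)\bigr)\varpi(\omega_{i,k})$ with $B_1,B_2$ depending only on $L,\phi^\circledast(L),\delta,\varkappa,\normlipvarpi{\diff\phi}$ (here one also uses $d_\cA\leq$ const, valid since we may restrict to a bounded range; alternatively keep the $d_\cA$-dependence explicit and note it is dominated for $T$ small). Lemma~9 of \cite{brault2} then gives $\abs{V_{i,j}}\leq \tfrac{2}{1-\varkappa}(B_1\normsup{y^\pi-z^\pi}+B_2 d_\cA(\phi,\psi))\varpi(\omega_{i,j})$, and since $V_{i,j}=\bigl(y^\pi_j-\phi_{j,i}(y^\pi_i)-z^\pi_j+\psi_{j,i}(z^\pi_i)\bigr)$ this is precisely the first claimed inequality once we show the $\normsup{y^\pi-z^\pi}$ term is itself controlled by $d_\cA(\phi,\psi)+\abs{a-b}$. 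To get the second inequality, rewrite $V_{i,j}$ as $\bigl(y^\pi_j-z^\pi_j\bigr)-\bigl(\phi_{j,i}(y^\pi_i)-\psi_{j,i}(z^\pi_i)\bigr)$, and expand $\phi_{j,i}(y^\pi_i)-\psi_{j,i}(z^\pi_i)=(y^\pi_i-z^\pi_i)+\bigl(\widehat\phi_{j,i}(y^\pi_i)-\widehat\phi_{j,i}(z^\pi_i)\bigr)+\bigl(\widehat\phi_{j,i}-\widehat\psi_{j,i}\bigr)(z^\pi_i)$; with $\normlip{\widehat\phi_{j,i}}\leq\delta_T$ and $\normsup{\widehat\phi_{j,i}-\widehat\psi_{j,i}}\leq d_\infty(\phi,\psi)\leq d_\cA(\phi,\psi)$, taking indices $i=0$, $j$ arbitrary and using $y^\pi_0-z^\pi_0=a-b$ yields $\abs{y^\pi_j-z^\pi_j}\leq \abs{a-b}+\delta_T\normsup{y^\pi-z^\pi}+d_\cA(\phi,\psi)+\abs{V_{0,j}}$. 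Combined with the Davie bound on $\abs{V_{0,j}}$, and choosing $T$ small enough that $\delta_T+\tfrac{2B_1}{1-\varkappa}\varpi(\omega_{0,T})<1$, one absorbs $\normsup{y^\pi-z^\pi}$ on the left and obtains $\normsup{y^\pi-z^\pi}\leq C d_\cA(\phi,\psi)+C'\abs{a-b}$, and back-substitution into the Davie bound gives the first inequality with the same kind of constants.

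\textbf{Main obstacle.} The delicate point is the bookkeeping in Step 1: one must arrange the insertions of intermediate terms so that every fourth-difference that appears is a fourth-difference of the \emph{stable} flow $\phi$ (only $\phi$ has the 4-points control \eqref{eq:4pc} available), while the terms forced onto $\psi$ are either plain first-differences controlled by $d_\cA$ (through $\normO{\cdot}$, $d_\infty$, or $\normsupvarpi{\diff\cdot}$) or Lipschitz-in-$\varpi$ differences of $\diff\psi$ — and one must check that the remaining "genuinely quadratic" product term in \eqref{eq:4pc} vanishes or is harmless, which it does here because $z^\pi_j-\psi_{j,i}(z^\pi_i)=0$ for a numerical scheme. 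Getting this decomposition clean, and verifying that the constants depend only on the quantities listed ($L,\phi^\circledast(L),\delta,\varkappa,\normlipvarpi{\diff\phi}$) and not on $\psi$ beyond $d_\cA$, is the real content; everything downstream is the now-standard Davie-lemma-plus-fixed-point argument.
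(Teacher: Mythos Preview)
Your overall architecture is exactly the paper's: set up $V_{i,j}$, establish an almost-additivity bound, invoke the Davie lemma, then close a contraction on $\normsup{y^\pi-z^\pi}$. However, two of the ingredients in Step~1 are not right as written.

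\textbf{First gap.} You split $\diff\phi_{k,j,i}(y^\pi_i)-\diff\psi_{k,j,i}(z^\pi_i)$ so that the Lipschitz increment falls on $\diff\psi$, and then claim $\normlipvarpi{\diff\psi}\leq\normlipvarpi{\diff\phi}+d_\cA(\phi,\psi)$. This is false: by Notation~\ref{not:4}, $d_\cA$ only controls $\normsupvarpi{\diff\phi-\diff\psi}$, not $\normlipvarpi{\diff\phi-\diff\psi}$, and $\psi$ is merely in $\cA[\delta,M]$, so $\normlipvarpi{\diff\psi}$ need not be finite at all. The remedy is to reverse the split: write
\[
\diff\phi_{k,j,i}(y^\pi_i)-\diff\psi_{k,j,i}(z^\pi_i)
=\bigl(\diff\phi_{k,j,i}(y^\pi_i)-\diff\phi_{k,j,i}(z^\pi_i)\bigr)
+\bigl(\diff\phi_{k,j,i}(z^\pi_i)-\diff\psi_{k,j,i}(z^\pi_i)\bigr),
\]
so that the Lipschitz increment lands on the \emph{stable} flow $\phi$ (contributing $\normlipvarpi{\diff\phi}\,\normsup{y^\pi-z^\pi}\,\varpi(\omega_{i,k})$) and the residual is a sup-difference of $\diff\phi-\diff\psi$ controlled by $d_\cA$. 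This is precisely what the paper does and is why the stated constants depend on $\normlipvarpi{\diff\phi}$ and not on any Lipschitz data of $\psi$.

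\textbf{Second gap.} You assert $z^\pi_j-\psi_{j,i}(z^\pi_i)=0$ ``(numerical scheme!)'' and conclude that the product term in \eqref{eq:4pc} collapses. That identity holds only for $j=i+1$; for general $i<j$ one only has $\abs{z^\pi_j-\psi_{j,i}(z^\pi_i)}\leq L\varpi(\omega_{i,j})$ by Lemma~\ref{lem:2}. Consequently the product term in the 4-points control does \emph{not} vanish. It is nonetheless harmless: with $a=y^\pi_j$, $b=\phi_{j,i}(y^\pi_i)$, $c=z^\pi_j$, $d=\psi_{j,i}(z^\pi_i)$ one has $\abs{a-b}\vee\abs{c-d}\leq L\varpi(\omega_{i,j})$, hence $\widecheck{\phi}_{k,j}(\cdot)\leq\phi^\circledast(L)\varpi(\omega_{i,k})$, while $\abs{a-c}\vee\abs{b-d}\leq(1+\delta_T)\normsup{y^\pi-z^\pi}+d_\infty(\phi,\psi)$ after inserting $\pm\phi_{j,i}(z^\pi_i)$ into $\abs{b-d}$. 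This produces the $\phi^\circledast(L)(1+\delta_T)\normsup{y^\pi-z^\pi}$ and $\phi^\circledast(L)\,d_\cA(\phi,\psi)$ contributions that appear in the paper's bound; your Step~2 and the closing argument then go through unchanged, and the parenthetical ``$d_\cA\leq\text{const}$'' is not needed.
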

\begin{proof}
    The next proposition is a discrete version of \cite[Proposition~10]{brault2}.
    Set 
    \begin{equation*}
	U_{i,k}\eqdef y^\pi_i-\phi_{k,i}(y^\pi_i)-z^\pi_i+\psi_{k,i}(z^\pi_i).
    \end{equation*}
    For  $i=0,\dotsc,n$, $U_{i,i}=U_{i,i+1}=0$ from the definition of $y^\pi$ and $z^\pi$.

Set $\alpha_{j,i}\eqdef\phi_{j,i}-\psi_{j,i}$ 
and $\alpha_{k,j,i}\eqdef\phi_{k,j,i}-\psi_{k,j,i}$. Assume that 
for any $(i,j,k)\in\rpi^3$, 
\begin{gather*}
    \abs{\alpha_{k,j,i}(z^\pi_i)}\leq \epsilon_1\varpi(\omega_{i,k}),\\
    \osc(\alpha_{k,j},\abs{z_j-\phi_{j,i}(z_i)})\leq \delta_T\epsilon_2(1+L)\varpi(\omega_{i,k})
    \text{ and }
    \abs{\alpha_{j,i}(z^\pi_i)}\leq \epsilon_3.
\end{gather*}

    With \eqref{eq:39} and  since $y^\pi\in\cP_\pi[\phi,a,L]$ and $z^\pi\in\cP_\pi[\psi,b,L]$, 
    the 4-points control~\eqref{eq:4pc} on $\phi$ yields 
    \begin{multline*}
	\abs{U_{i,k}}\leq \abs{U_{i,j}}+(1+\delta_T)\abs{U_{j,k}}
	+\abs{\phi_{i,j,k}(z^\pi_i)-\psi_{i,j,k}(z^\pi_i)}\\
	+\varpi(\omega_{i,k})
	\Big(
	    \phi^\circledast(L)(1+\delta_T)\normsup{y^\pi-z^\pi}
	    +\phi^\circledast(L)\abs{\phi_{j,i}(z^\pi_i)-\psi_{j,i}(z^\pi_i)}
	    \\
	    +\delta_T\epsilon_2(1+L)\varpi(\omega_{i,k})
	    +\normlipvarpi{\diff \phi}\normsup{y^\pi-z^\pi}
	)
	\\
	\leq
\abs{U_{i,j}}+(1+\delta_T)\abs{U_{j,k}}
+(N+N'\normsup{y^\pi-z^\pi})\varpi(\omega_{i,k})
\end{multline*}
where   
\begin{align*}
    N&\eqdef\Paren*{\epsilon_1+\delta_T(1+L)\epsilon_2+\phi^\circledast(L)\epsilon_3}
    \leq (1+\delta_T L^\gamma+\phi^\circledast)d_{\cA}(\phi,\psi)
    \\
\text{ and }
N'&\eqdef\Paren*{\phi^\circledast(L)(1+\delta_T)+\normlipvarpi{\diff \phi}}.
    \end{align*}
    The Davie Lemma \cite[Lemma 9]{brault2} implies that 
    \begin{gather}
	\label{eq:41}
	\abs{U_{i,k}}\leq ND\varpi(\omega_{i,k})+N'D\normsup{y^\pi-z^\pi}\varpi(\omega_{i,k})
	\ \forall (i,k)\in\pi_+^2
	\\
	\notag
	\text{with } 
	D\eqdef 
    \frac{2+\delta_T}{1-\varkappa(1+\delta_T)^2-\delta_T}.
    \end{gather}
    Thus, 
    \begin{equation*}
	\normsup{y^\pi-z^\pi}\leq ND\varpi(\omega_{0,T})+N'D\normsup{y^\pi-z^\pi}\varpi(\omega_{0,T})
	+(1+\delta_T)\abs{a-b}.
    \end{equation*}
    Assuming that $T$ is small enough so that $N'D\varpi(\omega_{0,T})<1$, 
    \begin{equation}
      \label{eq:40}
      \normsup{y^\pi-z^\pi}\leq \frac{ND}{1-N'D\varpi(\omega_{0,T})}+\frac{1+\delta_T}{1-N'D\varpi(\omega_{0,T})}\abs{a-b}.
    \end{equation}
    Injecting \eqref{eq:40} into \eqref{eq:41} leads to the result.
\end{proof}
\begin{notation}[Perturbations]
    Let $\cE$ be the family of elements $\epsilon\in\cO(\uV,\uV)$ such that 
for some parameter~$\eta\geq 0$, 
    \begin{equation}
	\label{eq:42}
	\normO{\epsilon}\leq \eta\text{ and }\generalnorm{\infty\div\varpi}{\epsilon}\leq \eta.
    \end{equation}
    We denote by $\normE{\epsilon}$ the minimal value of $\eta$ for which \eqref{eq:42} holds.
    An element of $\cE$ is called a \emph{perturbation} \cite{brault1}.  
\end{notation}
\begin{notation}[Perturbed numerical schemes]
    Given an almost flow $\phi\in\cA[\delta,M]$, a perturbation $\epsilon\in\cE$, a starting point $a\in\uV$
    and a partition $\pi=\Set{t_i}_{i=0}^n$, the \emph{perturbed numerical scheme} associated $(\phi,\epsilon)$ is
    $z^\pi_{k+1}=\phi_{k+1,k}(z^\pi_k)+\epsilon_{k+1,k}(z^\pi_k)$ with $z^\pi_0=a$. A perturbed numerical scheme 
    solves $z^\pi_{i,j}=\Phi^\pi_{i,j}(z^\pi)+E_{i,j}$ with~$E_{i,j}=\sum_{k=i}^{j-1} \epsilon_{i+1,i}(z_i^\pi)$.
\end{notation}

In the context of numerical analysis, a perturbation $\epsilon$ corresponds for example to \emph{round-off errors}
while the choice of an almost flow correspond to \emph{truncation error}. 

\begin{corollary}[Stability of perturbed numerical schemes]
    \label{cor:stability}
    Let $\phi$ be a stable almost flow and $\epsilon\in\cE$.  
    Then there exists a constant $K$ depending on $\varpi$, $\omega_{0,T}$, $M$ and $\delta$ such that 
    for any partition $\pi$ of $\TT$,
    \begin{equation}
	\label{eq:43}
	\normsup{y^\pi-z^\pi}\leq K\normE{\epsilon}, 
    \end{equation}
    where  $y^\pi$ is the numerical scheme associated to $\phi$ and $z^\pi$ is the perturbed numerical scheme 
    associated to $(\phi,\epsilon)$.
\end{corollary}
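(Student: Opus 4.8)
The plan is to realize the perturbed numerical scheme $z^\pi$ associated to $(\phi,\epsilon)$ as a genuine (unperturbed) numerical scheme of a nearby almost flow $\psi$, and then apply Proposition~\ref{prop:stability:schemes}. Concretely, I would set $\psi_{t,s}\eqdef\phi_{t,s}+\epsilon_{t,s}$. The first task is to check that $\psi\in\cA[\delta',M']$ for suitable $\delta',M'$: since $\epsilon\in\cE$ we have $\normO{\epsilon}\leq\normE{\epsilon}$ and $\normsupvarpi{\epsilon}\leq\normE{\epsilon}$, so $\widehat\psi=\widehat\phi+\epsilon$ is of class $\cO$ with $\normO{\widehat\psi}\leq 1+\normE{\epsilon}$ and $\normsup{\widehat\psi_{t,s}}\leq\delta_{t-s}+\normE{\epsilon}\varpi(\omega_{s,t})$; bounding $\varpi(\omega_{s,t})$ by $\varpi(\omega_{0,T})$ and absorbing into a new $\delta'$ (and rescaling to restore $\normO{\widehat\psi}\leq 1$ if one insists on the normalization, or more simply observing that $d_\cA$ and Proposition~\ref{prop:stability:schemes} only need $\phi\in\cSA$ and $\psi\in\cA[\delta,M]$ for a \emph{common} $\delta,M$ which we enlarge). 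For the almost-flow defect, $\diff\psi_{t,s,r}=\diff\phi_{t,s,r}+\epsilon_{s,r}+\epsilon_{t,s}\circ\phi_{s,r}-\epsilon_{t,r}$, which, using $\normsupvarpi{\epsilon}\leq\normE{\epsilon}$ together with superadditivity of $\omega$, is bounded by $(M+3\normE{\epsilon})\varpi(\omega_{r,t})$; the composition term $\epsilon_{t,s}\circ\phi_{s,r}$ is handled just by the sup-bound on $\epsilon_{t,s}$, no regularity needed. Hence $\psi\in\cA[\delta',M']$ with $\delta',M'$ depending only on $\delta,M,\varpi,\omega_{0,T}$ and $\normE{\epsilon}$ (and we may as well take $\delta',M'$ uniform for $\normE{\epsilon}\leq 1$, the only regime that matters for \eqref{eq:43}).

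Next I would estimate $d_\cA(\phi,\psi)$. By definition $d_\cA(\phi,\psi)=\max\{d_\infty(\phi,\psi),\normO{\phi-\psi},\normsupvarpi{\diff\phi-\diff\psi}\}$. The first two terms are immediate: $\phi_{t,s}-\psi_{t,s}=-\epsilon_{t,s}$, so $d_\infty(\phi,\psi)=\sup_{s,t,a}\abs{\epsilon_{t,s}(a)}\leq\normE{\epsilon}\varpi(\omega_{0,T})$ and $\normO{\phi-\psi}=\normO{\epsilon}\leq\normE{\epsilon}$. The third term uses the expression for $\diff\psi_{t,s,r}-\diff\phi_{t,s,r}=\epsilon_{s,r}+\epsilon_{t,s}\circ\phi_{s,r}-\epsilon_{t,r}$ computed above, which gives $\normsupvarpi{\diff\phi-\diff\psi}\leq 3\normE{\epsilon}$. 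Therefore $d_\cA(\phi,\psi)\leq c\,\normE{\epsilon}$ for a constant $c$ depending only on $\varpi$ and $\omega_{0,T}$.

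Finally, the perturbed scheme $z^\pi$ with $z^\pi_0=a$ is by construction exactly the numerical scheme associated to $\psi$ with the same starting point $a$, since $z^\pi_{k+1}=\phi_{k+1,k}(z^\pi_k)+\epsilon_{k+1,k}(z^\pi_k)=\psi_{k+1,k}(z^\pi_k)$. Applying Proposition~\ref{prop:stability:schemes} to the stable almost flow $\phi$ and the almost flow $\psi$, both in $\cA[\delta',M']$, with common starting point ($a=b$), and provided $T$ is small enough, yields $\normsup{y^\pi-z^\pi}\leq C\,d_\cA(\phi,\psi)\leq Cc\,\normE{\epsilon}$, which is \eqref{eq:43} with $K\eqdef Cc$; by inspection $C$ and $c$ depend only on $\varpi$, $\omega_{0,T}$, $M$, $\delta$ (and $L$, $\phi^\circledast(L)$, $\generalnorm{\Lip,\varpi}{\phi}$, which are themselves controlled by those data and $\phi$), as claimed. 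I expect the only genuinely delicate point to be bookkeeping: making sure $\phi$ and $\psi$ land in a \emph{common} class $\cA[\delta',M']$ so that Proposition~\ref{prop:stability:schemes} applies verbatim, and tracking that enlarging $\delta,M$ does not disturb the hypothesis $\phi\in\cSA$ — it does not, since the stable-almost-flow conditions \eqref{eq:saf:1} and \eqref{eq:4pc} on $\phi$ are unchanged, and $T$ small enough is exactly what Proposition~\ref{prop:stability:schemes} already requires. Everything else is a routine application of the triangle-inequality estimates on $\epsilon$ furnished by $\epsilon\in\cE$.
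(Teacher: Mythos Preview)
Your approach is exactly the paper's: set $\psi_{t,s}\eqdef\phi_{t,s}+\epsilon_{t,s}$, check that $\psi$ is an almost flow, bound $d_\cA(\phi,\psi)$ by a constant times $\normE{\epsilon}$, observe that $z^\pi$ is the numerical scheme of $\psi$, and invoke Proposition~\ref{prop:stability:schemes}. One small correction to your algebra: the identity for $\diff\psi_{t,s,r}-\diff\phi_{t,s,r}$ should read
\[
\epsilon_{s,r}(a)+\bigl[\widehat\phi_{t,s}(\psi_{s,r}(a))-\widehat\phi_{t,s}(\phi_{s,r}(a))\bigr]+\epsilon_{t,s}(\psi_{s,r}(a))-\epsilon_{t,r}(a),
\]
i.e.\ the inner composition is with $\psi_{s,r}$, not $\phi_{s,r}$, and there is an extra $\widehat\phi$-oscillation term you omitted; since $\normlip{\widehat\phi_{t,s}}\leq\delta_T$ for a stable almost flow this extra term is bounded by $\delta_T\normE{\epsilon}\,\varpi(\omega_{r,t})$, so your conclusion $d_\cA(\phi,\psi)\leq c\,\normE{\epsilon}$ survives (the paper obtains $c=\max\{2+\delta_T,\varpi(\omega_{0,T})\}$).
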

\begin{proof} 
    From \cite{brault1}, $\psi_{t,s}\eqdef\phi_{t,s}+\epsilon_{t,s}$ is an almost flow, 
    yet not necessarily a stable one, that belongs to $\cA[\delta(1+\eta),M+(2+\delta_T)\eta]$. Moreover, 
    \begin{equation*}
	d_{\cA}(\psi,\phi)\leq \normE{\epsilon}\max\Set{(2+\delta_T),\varpi(\omega_{0,T})}.
    \end{equation*}
    Inequality~\eqref{eq:43} stems from Proposition~\ref{prop:stability:schemes}.
\end{proof}
\section{Generic properties of flows}

\label{sec:generic}

\subsection{The generic property}

Related to differential equations, a \emph{generic property} is a property which holds for
\textquote{almost all} (in the sense of Baire) vector fields and starting points. A precise
description relies on the notion of residual set. The study of generic properties
to differential equations have started with W.~Orlicz~\cite{orlicz}. Many results are exposed in~\cite{myjak}.

\begin{definition}[Residual set]
    A set $\cN$ in a complete metric space $\cM$ is \emph{residual} if 
    its $\cM\setminus\cN$ is of Baire first category.
\end{definition}
\begin{definition}[Generic property]
    A property is said to be \emph{generic} if it is true on a residual set. 
\end{definition}
We now state our main result, which is an adaptation of the ones in \cite{lasota,deblasi83,myjak}
to our setting. It relies on the following lemma. 

\begin{lemma}[{A. Lasota \& J.A. Yorke, \cite[Lemma~1.2]{myjak}}]
    \label{lem:generic} 
    Let $\cM$ be a complete metric space with a dense subset $\cN$. Assume that there exists $\Theta:\cM\to\RR_+$
    such that $\Theta(x)=0$ for any $x\in\cN$ and $\Theta$ is continuous at any $x\in\cN$. 
    Then \mbox{$\Set{x\in\cM\given \Theta(x)=0}$} is residual in $\cM$.
\end{lemma}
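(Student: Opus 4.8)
The plan is to apply the Baire category theorem. First I would introduce, for each $n \geq 1$, the closed set
\begin{equation*}
    \cF_n \eqdef \Set{x \in \cM \given \Theta(x) \geq 1/n}.
\end{equation*}
These are closed because... actually $\Theta$ need not be continuous everywhere, so I cannot conclude $\cF_n$ is closed directly. Instead, the right move is to work with the open sets
\begin{equation*}
    \cU_n \eqdef \Set{x \in \cM \given \text{there is a neighbourhood } W \text{ of } x \text{ with } \Theta < 1/n \text{ on } W},
\end{equation*}
which is the interior of $\Set{\Theta < 1/n}$ and hence open by construction. The key claim is then that each $\cU_n$ is dense in $\cM$: given $x \in \cM$ and $\varepsilon > 0$, density of $\cN$ gives a point $y \in \cN$ within $\varepsilon/2$ of $x$; since $\Theta(y) = 0$ and $\Theta$ is continuous at $y$, there is a ball $B(y,\rho)$ on which $\Theta < 1/n$, and shrinking $\rho$ so that $B(y,\rho) \subset B(x,\varepsilon)$ we get $y \in \cU_n$ with $y$ within $\varepsilon$ of $x$, so $\cU_n$ meets $B(x,\varepsilon)$.

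Next I would invoke the Baire category theorem: $\cM$ is complete, hence the countable intersection $\cG \eqdef \bigcap_{n \geq 1} \cU_n$ is residual (its complement is the countable union of the closed nowhere-dense sets $\cM \setminus \cU_n$). It remains to identify $\cG$ with $\Set{x \in \cM \given \Theta(x) = 0}$: if $x \in \cG$ then for every $n$ there is a neighbourhood of $x$ on which $\Theta < 1/n$, in particular $\Theta(x) < 1/n$ for all $n$, so $\Theta(x) = 0$; conversely this inclusion already shows $\Set{\Theta = 0} \supseteq \cG$, which together with residuality of $\cG$ gives that $\Set{\Theta = 0}$ is residual (a superset of a residual set is residual). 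That completes the argument.

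The only delicate point is the one flagged above: $\Theta$ is assumed continuous \emph{only at points of $\cN$}, not globally, so one must not pass to sublevel or superlevel sets of $\Theta$ as if they were closed/open; the fix is to take interiors explicitly and use the density-plus-local-continuity hypothesis exactly at the approximating points $y \in \cN$. Everything else is the standard Baire category packaging. Note also that we do not actually need $\Set{\Theta = 0} = \cG$ on the nose — the inclusion $\cG \subseteq \Set{\Theta = 0}$ suffices, since a set containing a residual set is residual — which is convenient because the reverse inclusion could fail if $\Theta$ vanishes at a point where it is wildly discontinuous.
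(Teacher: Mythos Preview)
Your argument is correct and is precisely the standard Baire-category proof of the Lasota--Yorke lemma: take the interiors $\cU_n$ of the sublevel sets $\Set{\Theta<1/n}$, use density of $\cN$ together with continuity of $\Theta$ at those points to see that each $\cU_n$ is open and dense, and conclude by Baire that $\cG\eqdef\bigcap_n\cU_n$ is residual and contained in $\Set{\Theta=0}$. Your observation that only the inclusion $\cG\subseteq\Set{\Theta=0}$ is needed (and that the reverse inclusion may fail at points of discontinuity of $\Theta$) is exactly right.

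Note, however, that the paper does not supply its own proof of this lemma: it is quoted verbatim as \cite[Lemma~1.2]{myjak} and used as a black box in the proof of Theorem~\ref{thm:generic}. So there is no \textquote{paper's proof} to compare against here; what you have written is the classical justification of the cited result, and it would serve perfectly well if one wished to make the article self-contained on this point.
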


\begin{hypothesis}
    \label{hyp:3}
    We consider a complete metric space $(\cQ,d)$ with a dense subspace~$\cR$.
    There exists a continuous mapping from $(\cQ,d)$ to $\phi[f]\in(\cA[\delta,M],d_\cA)$
    ($d_\cA$ is defined in Notation~\ref{not:4}) which transforms $f\in\cQ$ into $\phi[f]\in\cA[\delta,M]$ 
    and such that $\phi[f]\in\cSA[\delta,M]$ for any $f\in\cR$.
\end{hypothesis}

\begin{theorem}[Generic property of existence, uniqueness and convergence]
    \label{thm:generic}
    Under Hypothesis~\ref{hyp:3}, 
    existence, uniqueness of D-solution and convergence of numerical schemes are generic properties.
    More precisely, let $\cN$ be the subset of $\cM=\uV\times\cQ$ 
    such that the numerical schemes $y^\pi\in\cC(\TT,\uV)$ associated to $\phi[f]$ with $y^\pi_0=a$  
    converges uniformly with respect to $\pi$ to some $y\in\cC(\TT,\uV)$.
    Then $\cN$ is a residual set in $\uV\times\cQ$
    and $y\in\cP[\phi[f],a]$. In addition, the subset $\uV\times\cR$ of $\cM$ such that $\cP[\phi[f],a]$
    contains only one point is a residual set.
\end{theorem}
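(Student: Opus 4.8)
The plan is to apply Lemma~\ref{lem:generic} twice, once on the metric space $\cM = \uV \times \cQ$ to get the residual set $\cN$ of points $(a,f)$ for which the numerical schemes converge, and once (essentially the same argument, or a refinement) to get the residual set on which the D-solution is unique. The first task is to produce the functional $\Theta : \uV \times \cQ \to \RR_+$ required by the lemma: it should vanish on the dense subset $\uV \times \cR$ (where $\phi[f]$ is a stable almost flow, so by Corollary~\ref{cor:5}/Proposition~\ref{prop:convergence} the numerical schemes form a $\normsup{\cdot}$-Cauchy net and converge) and be continuous at every point of $\uV \times \cR$.

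First I would define $\Theta(a,f)$ to measure the failure of the numerical schemes $\Set{y^\pi}_\pi$ (associated to $\phi[f]$, started at $a$) to be a Cauchy net in $\normsup{\cdot}$ — for instance
\begin{equation*}
    \Theta(a,f) \eqdef \limsup_{\mesh{\pi}\to 0}\ \limsup_{\mesh{\sigma}\to 0}\ \normsup{y^\pi - z^\sigma},
\end{equation*}
where $y^\pi$, $z^\sigma$ are the numerical schemes for $\phi[f]$ from $a$; by completeness of $\cC(\TT,\uV)$, $\Theta(a,f)=0$ is equivalent to convergence of the net, and the limit then lies in $\cP[\phi[f],a]$ by Lemma~\ref{lem:convergence} (since $y^\pi\in\cP_\pi[\phi[f],a,L]$ by Lemma~\ref{lem:2}, with $L$ uniform over $\cA[\delta,M]$). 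On $\uV\times\cR$ we have $\Theta\equiv 0$ by Proposition~\ref{prop:convergence} via Corollary~\ref{cor:5}. The density of $\uV\times\cR$ in $\uV\times\cQ$ is part of Hypothesis~\ref{hyp:3} together with density of $\uV$ in itself.

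The key step — and the main obstacle — is \textbf{continuity of $\Theta$ at points $(a_0,f_0)\in\uV\times\cR$}, and this is exactly where Proposition~\ref{prop:stability:schemes} is used. Given $(a,f)$ close to $(a_0,f_0)$, the map $f\mapsto\phi[f]$ is continuous into $(\cA[\delta,M],d_\cA)$, so $d_\cA(\phi[f],\phi[f_0])$ and $\abs{a-a_0}$ are small. Since $\phi[f_0]\in\cSA\cap\cA[\delta,M]$, Proposition~\ref{prop:stability:schemes} gives, for $T$ small, a bound
\begin{equation*}
    \normsup{y^\pi - w^\pi} \leq C\, d_\cA(\phi[f],\phi[f_0]) + C'\abs{a-a_0},
\end{equation*}
\emph{uniformly in $\pi$}, where $y^\pi$, $w^\pi$ are the numerical schemes for $\phi[f]$ from $a$ and for $\phi[f_0]$ from $a_0$ respectively, with $C,C'$ depending only on $L,\phi[f_0]^\circledast(L),\delta,\varkappa,\normf{\Lip,\varpi}{\phi[f_0]}$ — all fixed once $f_0$ is. Combining this uniform estimate for the two mesh parameters $\pi$ and $\sigma$ and taking $\limsup$'s, the triangle inequality yields
\begin{equation*}
    \abs{\Theta(a,f) - \Theta(a_0,f_0)} = \Theta(a,f) \leq 2C\, d_\cA(\phi[f],\phi[f_0]) + 2C'\abs{a-a_0} \xrightarrow[(a,f)\to(a_0,f_0)]{} 0,
\end{equation*}
using $\Theta(a_0,f_0)=0$. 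This is continuity of $\Theta$ at each point of the dense set $\uV\times\cR$; Lemma~\ref{lem:generic} then gives that $\cN=\Set{\Theta=0}$ is residual, and on $\cN$ the common limit $y$ of the $y^\pi$ lies in $\cP[\phi[f],a]$ by Lemma~\ref{lem:convergence}, proving existence of a D-solution there.

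For the last assertion — that $\uV\times\cR$ (or rather the subset of $\cM$ on which $\cP[\phi[f],a]$ is a singleton) is residual — I would note two things. On $\uV\times\cR$ the almost flow $\phi[f]$ is stable, so Corollary~\ref{cor:5} and Proposition~\ref{prop:convergence} give $\cP[\phi[f],a]=\Set{y}$ with $y=\lim_\pi y^\pi$; thus uniqueness certainly holds on the dense set $\uV\times\cR$. To upgrade "holds on a dense set" to "holds on a residual set", I would either (i) observe that on $\cN$ we have convergence of all numerical schemes to a single limit which, on $\cN$, is the only possible limit point, and combine this with the uniqueness criterion already embedded in the proof of Proposition~\ref{prop:convergence} (two D-solutions in $\cP[\phi,a,K]$ must both be approximated by the same $y^\pi$, hence coincide — but this needs stability, hence genuinely restricts to where stability-type estimates survive); or (ii) introduce a second Lasota–Yorke functional $\widetilde\Theta(a,f)$ measuring the diameter of $\cP[\phi[f],a]$ in $\normsup{\cdot}$ (set to $0$ if the set is empty or a singleton), show $\widetilde\Theta$ vanishes on $\uV\times\cR$, and prove it is continuous there again via Proposition~\ref{prop:stability:schemes} applied with both arguments converging to the stable flow $\phi[f_0]$. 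The cleanest route is (ii): the same uniform-in-$\pi$ estimate that controls $\Theta$ controls the spread of any D-solution away from $w^\pi$, because every element of $\cP[\phi[f],a]$ lies in $\cP_\pi[\phi[f],a,L]$ and the Cauchy estimate \eqref{eq:36b} type bound then pins it near $w^\pi$. I would expect the delicate point to be making sure the constants in Proposition~\ref{prop:stability:schemes} do not blow up as $f\to f_0$ — but they depend only on the fixed data of $f_0$ and on $\delta,M,\varkappa$, all uniform over $\cA[\delta,M]$, so this is fine, and the smallness-of-$T$ requirement is the standing Hypothesis~\ref{hyp:time} (one reduces to a small horizon and concatenates, as usual).
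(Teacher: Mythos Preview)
Your proposal is correct and follows essentially the same approach as the paper: define a Lasota--Yorke functional $\Theta(a,f)$ measuring the asymptotic oscillation of the numerical schemes, show it vanishes on $\uV\times\cR$ via Corollary~\ref{cor:5}/Proposition~\ref{prop:convergence}, prove continuity at points of $\uV\times\cR$ by the triangle inequality together with Proposition~\ref{prop:stability:schemes}, and invoke Lemma~\ref{lem:generic}. For uniqueness the paper does exactly your option~(ii), taking $\widetilde\Theta(a,f)=\sup_{y,z\in\cP[\phi[f],a]}\normsup{y-z}$ and declaring the continuity argument ``similar to the above one''; your sketch of that step is accurate, though note that the comparison of an arbitrary D-solution (rather than a numerical scheme) to $w^\pi$ rests on the continuous analog of Proposition~\ref{prop:stability:schemes} (Proposition~10 in \cite{brault2}), not on~\eqref{eq:36b} directly.
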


\begin{proof}
Let us define for $a\in\uV$ and $f\in\cQ$, 
\begin{equation*}
    \Theta((a,f))\eqdef\limsup_{\substack{\pi,\sigma\\ \mesh{\pi},\mesh{\sigma}\to 0}} \normsup{y^\pi[f,a]-y^\sigma[f,a]},
\end{equation*}
where $y^\pi[f,a]$ is the numerical scheme associated to $\phi[f]$ with $y^\pi[f,a]_0=a$.
With Proposition~\ref{prop:3}, $\Theta((a,f))=0$ for any $(a,f)\in\uV\times\cR$.

Let $\Set{(a_k,f_k)}_{k\geq 0}$ be a sequence of elements of $\cM$ converging to $(a,f)\in\uV\times\cR$.
By Hypothesis~\ref{hyp:3}, $\phi[f_k]\in\cA[\delta,M]$ while $\phi[f]\in\cSA[\delta,M]$.

By the triangle inequality, 
\begin{multline*}
    \normsup{y^{\pi}[f_k,a_k]-y^\sigma[f_k,a_k]}
    \leq 
    \normsup{y^{\pi}[f_k,a_k]-y^\pi[f,a]}
    +\normsup{y^\pi[f,a]-y^\sigma[f,a]}
    \\
    +\normsup{y^{\sigma}[f_k,a_k]-y^\sigma[f,a]}.
\end{multline*}
Using Corollary~\ref{cor:5} and Proposition~\ref{prop:stability:schemes}, 
\begin{multline*}
    \normsup{y^{\pi}[f_k,a_k]-y^{\sigma}[f_k,a_k]}
    \\
    \leq 
    2C\Paren*{d_\cA(\phi[f_k],\phi[f])+\abs{a-a_k}}
    +C'\max\Set{\mu_{0,T}(\pi),\mu_{0,T}(\sigma)},
\end{multline*}
for a constant $C$ which depends on $f$ but which is uniform in $\pi,\sigma$, and a constant $C'$
which is uniform on $\pi,\sigma$.
Thus, for any $\epsilon>0$, one may choose $k_0$ large enough such that for any $k\geq k_0$
$2C\Paren*{d_\cA(\phi[f_k],\phi[f])+\abs{a-a_k}}\leq \epsilon$  
as well as some $\eta$ such that when $\max\Set{\mesh{\pi},\mesh{\sigma}}<\eta$, 
$C'\max\Set{\mu_{0,T}(\pi),\mu_{0,T}(\sigma)}\leq \epsilon$. 
Therefore, for any $k\geq k_0$, 
$\Theta((a_k,f_k))\leq 2\epsilon$ and $\lim_k \Theta((a_k,f_k))=0$.
It follows that $\Set{\Theta((a,f))=0\given (a,f)\in\uV\times\cQ}$,
which contains $\uV\times\cR$, is residual in $\uV\times\cQ$. 

For the uniqueness, we replace $\Theta$ by 
\begin{equation*}
    \Theta((a,f))\eqdef \sup_{y,z\in \cP[\phi[f],a]}\normsup{y-z}.
\end{equation*}
Again by Proposition~\ref{prop:convergence}, $\Theta((a,f))=0$ for $(a,f)\in\uV\times\cR$. 
The proof is similar to the above one.
\end{proof}

\subsection{Application to RDE}

We consider the case of RDE $y_t=a+\int_0^t f(y_s)\vd\bx_s$, the result being similar for YDE.
The driving rough path lies above a path living in a Banach space $\uU$, while 
the solution $y$ lives in another Banach space $\uV$.

let us fix $2\leq p<3$.
We consider a $p$-rough path $\bx\eqdef(1,\bx^{(1)},\bx^{(2)})$ with respect to the control $\omega$
with values in $\RR\oplus \uU\oplus \uU^{\bigotimes 2}$, \cite[Definition 3.1.3]{lyons98a}.
This means that $\bx$ satisfies $\bx_{r,s}\otimes\bx_{s,t}=\bx_{s,t}$ for any $r\leq s\leq t\leq T$ and 
\begin{equation*}
    \normp{\bx}\eqdef \sup_{\substack{(s,t)\in\rTT^2\\s\neq t}}
    \Paren*{
    \frac{\abs{\bx^{(1)}_{s,t}}}{\omega_{s,t}^{1/p}}
    +\frac{\abs{\bx^{(2)}_{s,t}}}{\omega_{s,t}^{2/p}}
}<+\infty.
\end{equation*}

\begin{definition}[Lipschitz vector fields]
    For any $\gamma>0$,  a vector field $f:\uV\to L(\uU,\uV)$ is said to be a 
    $\Lip(\gamma)$-vector field (which we write $f\in\Lip(\gamma)$) if it is of class $\cCb^{\floor{\gamma}}$ with 
    \begin{equation*}
	\normhold{\gamma}{f}\eqdef
	\sum_{k=0,\dotsc,\floor{\gamma}}
    \normsup{\dD^k f}+ \normhold{\gamma-\floor{\gamma}}{\dD^k f}<+\infty,
    \end{equation*}
where $\normhold{\lambda}{f}\eqdef \sup_{x\not= y}\abs{f(x)-f(y)}/\abs{x-y}^\lambda$ is the $\lambda$-Hölder norm
for $0<\lambda\leq 1$.
\end{definition}

Fix $R\geq 0$ and $\gamma\geq p-1$. We define
\begin{equation*}
    \cU(R,\gamma)\eqdef \Set*{ f\in\Lip(\gamma)\given \normhold{\gamma}{f}\leq R}.
\end{equation*}
We use $\normhold{\gamma}{\cdot}$ as a norm on $\cU(R,\gamma)$.

For $f\in\cU(R,\gamma)$, the \emph{Davie approximation} is the family
\begin{equation}
    \label{eq:davie}
    \phi_{t,s}[f,\bx](a)=a+f(a)\bx^{(1)}_{s,t}+f^{(2)}(a)\bx^{(2)}_{s,t}
    \text{ for }a\in\uV\text{ and }(s,t)\in\rTT^2
\end{equation}
with $f^{(2)}(a)=\dD f(a)\cdot f(a)$.
When $1+\gamma>p$, $\phi[f,\bx]$ is an almost flow \cite{brault1}. When $\gamma>p$,  
then it is a stable almost flow \cite{brault2}.
A regularisation argument implies that when $1\leq \gamma\leq 3$, then $\cU(R,3)$ is dense into $\cU(R,\gamma)$.

\begin{lemma}
    Assume $\gamma>2$.  Let $f_n\in\cU(R,\gamma)$
    which converges to $f\in\cU(R,\gamma)$.
    Then $d_{\cA}(\phi[f_n,\bx],\phi[f,\bx])$ converges to $0$.
\end{lemma}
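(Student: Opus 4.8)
The plan is to unwind the definition of $d_{\cA}$ from Notation~\ref{not:4}, which requires controlling three quantities: $d_\infty(\phi[f_n,\bx],\phi[f,\bx])$, $\normO{\phi[f_n,\bx]-\phi[f,\bx]}$, and $\normsupvarpi{\diff\phi[f_n,\bx]-\diff\phi[f,\bx]}$. The key observation is that the Davie approximation \eqref{eq:davie} depends on $f$ only through the pair $(f, f^{(2)})$ where $f^{(2)} = \dD f \cdot f$, so $\phi_{t,s}[f,\bx] - \phi_{t,s}[g,\bx]$ is an affine expression in $(f-g)$ and $(f^{(2)} - g^{(2)})$, tested against the rough path components $\bx^{(1)}_{s,t}$ and $\bx^{(2)}_{s,t}$, whose sizes are controlled by $\normp{\bx}\,\omega_{s,t}^{1/p}$ and $\normp{\bx}\,\omega_{s,t}^{2/p}$. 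Since $\varpi(x) = x^{\gamma'/p}$ for the relevant exponent (here with $2 < \gamma \leq 3$ one works with the exponent governing the almost-flow structure, namely some $\theta = (1+\gamma)/p > 1$ for $\gamma \in (p-1,p]$, or the analogous choice used in establishing $\phi[f,\bx]\in\cA[\delta,M]$), all three norms are bounded by a fixed constant depending on $\normp{\bx}$, $R$, and $\omega_{0,T}$ times $\normhold{\gamma}{f_n - f}$ plus $\normhold{\gamma-1}{f_n^{(2)} - f^{(2)}}$.

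First I would write $\phi_{t,s}[f_n,\bx](a) - \phi_{t,s}[f,\bx](a) = (f_n-f)(a)\bx^{(1)}_{s,t} + (f_n^{(2)} - f^{(2)})(a)\bx^{(2)}_{s,t}$ and bound it uniformly in $a$ to get $d_\infty \leq \normp{\bx}(\omega_{0,T}^{1/p}\normsup{f_n-f} + \omega_{0,T}^{2/p}\normsup{f_n^{(2)}-f^{(2)}})$. For the $\cO$-norm, I would estimate the oscillation $\osc((\widehat{\phi}[f_n,\bx]-\widehat{\phi}[f,\bx])_{t,s}, L\varpi(\omega_{r,s}))$ by using that $f_n - f$ and $f_n^{(2)}-f^{(2)}$ are Hölder continuous with semi-norms controlled by $\normhold{\gamma}{f_n-f}$ (and the product rule, since $f^{(2)} = \dD f\cdot f$ is Lipschitz/Hölder in terms of $\normhold{\gamma}{f}$), combined again with the rough-path bounds; the structure is exactly the one used in Example~\ref{ex:2} applied to the difference. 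For the third term, $\diff\phi[f,\bx]_{t,s,r} = \phi_{t,s}[f,\bx]\circ\phi_{s,r}[f,\bx] - \phi_{t,r}[f,\bx]$ is a polynomial expression in $(f,f^{(2)})$ evaluated at $a$ and at $\phi_{s,r}[f,\bx](a)$, using the multiplicativity $\bx_{r,s}\otimes\bx_{s,t}=\bx_{r,t}$; subtracting the $f_n$ and $f$ versions gives a telescoping/difference-of-products expression, each factor of which is either $O(\normhold{\gamma}{f_n-f})$ or uniformly bounded (using $\normhold{\gamma}{f_n}\leq R$), so $\normsup{\diff\phi[f_n,\bx]_{t,s,r} - \diff\phi[f,\bx]_{t,s,r}} \leq C(R,\normp{\bx},\omega_{0,T})\normhold{\gamma}{f_n-f}\,\varpi(\omega_{r,t})$.

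The final step is to conclude: since $f_n \to f$ in $\cU(R,\gamma)$ means $\normhold{\gamma}{f_n-f}\to 0$, and since $\normhold{\gamma-1}{f_n^{(2)}-f^{(2)}} \leq C(R)\normhold{\gamma}{f_n-f}$ by the product rule (writing $f_n^{(2)}-f^{(2)} = (\dD f_n - \dD f)\cdot f_n + \dD f\cdot(f_n-f)$ and using the uniform bound $\normhold{\gamma}{f_n}\leq R$), all three pieces tend to $0$, hence $d_{\cA}(\phi[f_n,\bx],\phi[f,\bx])\to 0$. I expect the main obstacle to be bookkeeping in the $\diff\phi$ term: expanding $\phi_{t,s}\circ\phi_{s,r}$ produces cross-terms like $\dD f(a)\cdot(f(a)\bx^{(1)}_{s,t})\bx^{(1)}_{r,s}$ and one must carefully match them against $f^{(2)}(a)\bx^{(2)}_{r,t}$ via the Chen relation, then re-do this for the difference — this is precisely the computation behind "$\phi[f,\bx]$ is an almost flow" in \cite{brault1}, so it is standard but requires care to extract the Lipschitz-in-$f$ dependence rather than just boundedness. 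The condition $\gamma > 2$ ensures $\gamma - 1 > 1 \geq p-1$ only fails the endpoint, so actually one uses $\gamma>2$ to guarantee enough regularity of $f^{(2)}$ for the $\cO$-estimates; I would double-check whether $\gamma>2$ versus $1+\gamma>p$ is the sharp hypothesis here and align with the ambient convention $2\leq p<3$.
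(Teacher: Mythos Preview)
Your proposal is correct and follows essentially the same approach as the paper's proof. The paper is in fact much terser than you: it writes out an explicit formula for $\diff\phi_{t,s,r}[f](a)$ (a Taylor-type expansion of $f$ and $\dD f$ at shifted points, see \eqref{eq:21}), records the resulting bound $\abs{\diff\phi_{t,s,r}[f](a)}\leq M\varpi(\omega_{r,t})$ with $\varpi(x)=x^{(2+\gamma)/p}$, and then simply states that for $\gamma>2$ the convergence with respect to $d_\cA$ ``follows from straightforward computations'' using \eqref{eq:davie} and \eqref{eq:21}, up to replacing $\gamma$ by some $\gamma'<\gamma$. Your three-term decomposition is exactly the unpacking of that sentence; the only substantive difference is that you plan a telescoping/difference-of-products argument for the $\diff\phi$ term, whereas the paper would have you differentiate the explicit formula \eqref{eq:21} in $f$ directly --- these are equivalent, but the paper's route is slightly cleaner because the remainder structure (and the role of $\gamma>2$ in controlling $\dD f_n-\dD f$ in $\cC^{\gamma-1}$) is already visible. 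Your closing remark about possibly needing to trade $\gamma$ for $\gamma'<\gamma$ matches the paper's caveat precisely.
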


\begin{proof}
 A classical computation shows that when $\gamma>1$, 
 \begin{multline}
     \label{eq:21}
     \diff\phi_{t,s,r}[f](a)=
     \Paren*{
     f(a+f(a)\bx^{(1)}_{r,s}+f^{(2)}(a)\bx^{(2)}_{r,s})-f(a+f(a)\bx^{(1)}_{r,s})}\bx^{(1)}_{s,t}
     \\
     =\Paren*{f(a+f(a)\bx^{(1)}_{r,s}+f^{(2)}(a)\bx^{(2)}_{r,s})-f(a))}\bx^{(2)}_{s,t}
     \\
     +\int_0^1 \Paren*{\dD f(a+\tau f(a)\bx^{(1)}_{r,s})-\dD f(a)}f(a)\bx^{(1)}_{r,s}\otimes \bx^{(1)}_{r,t}.
 \end{multline}
 Thus, for a constant $M$ that depends only on $\normhold{\gamma}{f}$ and $\normp{\bx}$, 
 \begin{equation}
     \label{eq:22}
     \abs{\diff\phi_{t,s,r}[f](a)}
     \leq M \varpi(\omega_{r,t})
 \end{equation}
 with 
 \begin{equation}
     \label{eq:23}
    \varpi(x)=x^{(2+\gamma)/p} \text{ and } M\leq \normhold{\gamma}{f}^2\max\Set{\normp{\bx}^{1+\gamma},\normp{\bx}^{2+\gamma}}.
 \end{equation}
 For $\gamma>2$, we easily deduce from \eqref{eq:davie} and \eqref{eq:21} that $\phi[f_n,\bx]$ 
 converges to $\phi[f,\bx]$ with respect to $d_\cA$, up to changing $\gamma$ into $\gamma'<\gamma$.

 The result follows from straightforward computations.
\end{proof}

Combining the above results with Theorem~\ref{thm:generic} leads to the following result. 
The second points is obtained by applying a theorem of Kuratowski and Ulam \cite{kuratowski}
(See also Theorem~4.2 in \cite{deblasi83}).

\begin{corollary}
Existence, uniqueness and convergence of the numerical scheme related to the RDE
$y=a+\int_0^\cdot f(y_s)\vd \bx_s$ is generic with respect to $(a,f)\in\uV\times\cU(R,\gamma)$
when $\gamma\geq 2$. In addition, if $\uV$ is separable, then there exists a residual 
set $\cR$ in $\cU(R,\gamma)$ such that for an $f\in\cR$, there exists a residual 
set $\mathcal{V}[f]$ such that existence, uniqueness and convergence of the numerical 
scheme holds for $a\in\mathcal{V}[f]$.
\end{corollary}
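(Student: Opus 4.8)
The plan is to read this corollary as an application of Theorem~\ref{thm:generic} to the Davie approximation, followed by a Baire--category slicing (Kuratowski--Ulam) for the statement about slices in~$\uV$.

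First I would check that Hypothesis~\ref{hyp:3} holds with $(\cQ,d)$ the set $\cU(R,\gamma)$ equipped with the metric $(f,g)\mapsto\normhold{\gamma}{f-g}$, with dense subspace $\cU(R,\gamma\vee 3)$, and with the parametrisation $f\mapsto\phi[f,\bx]$ of~\eqref{eq:davie}. Completeness of $(\cQ,d)$ follows because $\Lip(\gamma)=\cCb^{\floor{\gamma}}$ with the norm $\normhold{\gamma}{\cdot}$ is a Banach space and $\cU(R,\gamma)$ is a closed ball inside it. Density of $\cU(R,\gamma\vee 3)$ in $\cQ$ is the regularisation statement recalled before the lemma above when $\gamma\le 3$, and is trivial when $\gamma>3$. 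Since $\gamma\ge 2$ forces $1+\gamma>p$, each $\phi[f,\bx]$ with $f\in\cU(R,\gamma)$ is an almost flow in $\cA[\delta,M]$ by~\cite{brault1}; and since $\gamma\vee 3>p$, each $\phi[f,\bx]$ with $f\in\cU(R,\gamma\vee 3)$ is a \emph{stable} almost flow by~\cite{brault2}. Finally, the $d_\cA$-continuity of $f\mapsto\phi[f,\bx]$ is exactly the content of the lemma above (which is where $\gamma>2$ is used). Granting Hypothesis~\ref{hyp:3}, Theorem~\ref{thm:generic} provides two residual subsets of $\cM=\uV\times\cU(R,\gamma)$: the set $\cN$ of pairs $(a,f)$ for which the numerical schemes associated to $\phi[f,\bx]$ converge uniformly, their limit lying in $\cP[\phi[f,\bx],a]$, and the set $\cN'$ of pairs for which $\cP[\phi[f,\bx],a]$ contains at most one point. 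On the residual set $G\eqdef\cN\cap\cN'$ there is a unique D-solution and it is the common limit of the numerical schemes, which gives the first assertion.

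For the second assertion, assume $\uV$ separable; then $\uV$, being a metric space, is second countable, and $G$ has the property of Baire since it is comeager in $\cM$. I would then invoke the Kuratowski--Ulam theorem~\cite{kuratowski} (see also \cite[Theorem~4.2]{deblasi83}), reading the product in the order $\cU(R,\gamma)\times\uV$: this only uses second countability of the second factor, so no assumption on $\cU(R,\gamma)$ is needed. It yields that $\cR\eqdef\Set{f\in\cU(R,\gamma)\given \Set{a\in\uV\given(a,f)\in G}\text{ is residual in }\uV}$ is residual in $\cU(R,\gamma)$, and for $f\in\cR$ the slice $\mathcal{V}[f]\eqdef\Set{a\in\uV\given(a,f)\in G}$ is residual in $\uV$; by the first part, existence, uniqueness and convergence then hold at every $(a,f)$ with $f\in\cR$ and $a\in\mathcal{V}[f]$.

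Once the inputs are in place the argument is essentially bookkeeping. The real work is hidden in the lemma above: establishing $d_\cA$-continuity amounts to controlling $\normsupvarpi{\diff\phi[f_n,\bx]-\diff\phi[f,\bx]}$ through the expansion~\eqref{eq:21}, which needs the H\"older modulus of $\dD f$ and hence the regularity $\gamma>2$ rather than just $1+\gamma>p$. A second point to watch is the precise form of Kuratowski--Ulam one quotes: the asymmetric version, requiring only the $\uV$-factor to be second countable, is what makes the separability hypothesis on $\uV$ sufficient.
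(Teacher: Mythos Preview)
Your proposal is correct and follows the same route as the paper: verify Hypothesis~\ref{hyp:3} for the Davie approximation $f\mapsto\phi[f,\bx]$ using the density of $\cU(R,3)$ in $\cU(R,\gamma)$ and the $d_\cA$-continuity lemma, apply Theorem~\ref{thm:generic} for the first assertion, and then slice with Kuratowski--Ulam for the second. The paper's own argument is simply the sentence preceding the corollary, so your write-up is a faithful and more detailed expansion of it; your remarks on the asymmetric form of Kuratowski--Ulam and on the role of $\gamma>2$ in the continuity lemma are apt (and indeed the lemma is stated for $\gamma>2$ while the corollary allows $\gamma\ge2$, a boundary case the paper does not discuss either).
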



\section{Flows of diffeomorphisms through Brownian flows}

\label{sec:br-flow}

Let $(A,\Sigma,\PP)$ be a probability space. In the following we note without necessarily specifying it, by
$\alpha$ some element of $A$.
Moreover, for an integer $k$, $\uL^k(A)$ denotes the space of random
variables $K$ such that the quantity $\normf{\uL^k}{K}=\EE(K^k)^{1/k}$ is finite.
In this section, the state space of the driving Brownian motion is $\uU=\RR^d$, 
while the state space of the solutions is $\uV=\RR^m$ for some $d,m\geq 1$.

\begin{hypothesis}
    \label{hyp:7}
    Let $\sigma:\RR^m\to L(\RR^d,\RR^m)$ be a continuous function in $\sigma\in\cCb^{1+\gamma}$
    for~$\gamma\in (0,1]$. 
\end{hypothesis}

Let $B$ be a $d$-dimensional Brownian motion on $(A,\Sigma,\PP)$. We consider the family of Itô SDE
\begin{equation}
    \label{eq:ito:1}
    X_t(a)=a+\int_0^t \sigma(X_s(a))\vd B_s\text{ for } t\geq 0,\ a \in \RR^m.
\end{equation}
Under Hypothesis~\ref{hyp:7} (even with $\gamma=0$), there exists a unique strong solution to~\eqref{eq:ito:1}.

\begin{notation}
    \label{not:ebm}
    An \textit{enhanced} (Itô) Brownian motion \cite[Sect.~3.2]{friz14a} is a rough paths~$\bB$ of 
    order $2$ decomposed as 
$\bB_{r,t}=1+B_{r,t}+\bB^{(2)}_{r,t}$ with 
\begin{equation*}
    \bB^{(2)}_{r,t}=\int_r^t B_{r,s}\otimes\dd B_s,\ \forall (r,t)\in\rTT^2.
\end{equation*}
We assume that $(A,\Sigma,\PP)$ carries $\bB$.
\end{notation}

The \emph{Davie approximation} is naturally defined as 
\begin{equation}
    \label{eq:ebm:davie}
    \phi_{t,s}[\sigma,\alpha](a)\eqdef a+\sigma(a)B_{s,t}(\alpha) +\dD\sigma(a)\cdot\sigma(a)\bB^{(2)}_{s,t}(\alpha)
    \text{ for } \alpha\in A.
\end{equation}
In \cite{brault1}, we saw that $\phi[\sigma,\alpha]$ is an almost flow when $\sigma\in\cCb^{1+\gamma}$.  
When $\sigma\in\cCb^{2+\gamma}$, $\phi$ is a stable almost flow. This latter case grants 
uniqueness of D-solutions as well as the existence of a Lipschitz flow.

Here, we consider a deterministic function $\sigma\in\cCb^{1+\gamma}\setminus\cCb^2$.
Actually, for any $\alpha$, Theorem~4.8 in \cite{davie05a} shows that for almost every $\alpha\in A$, 
there exists a vector field~$\sigma[\alpha]$ such that infinitely many D-solutions exist.
For such a choice, $\phi[\sigma(\alpha),\alpha]$ cannot be a stable almost flow. 
Therefore, we cannot expect that $\phi[\sigma,\alpha]$ is a stable almost flow 
for any pair $(\sigma,\alpha)$. Our main result states the existence of a Lipschitz flow,
but does not prove that $\phi[\sigma,\alpha]$ is a stable almost flow.

A series of well-known results of H.~Kunita state that 
$a\mapsto X_t(a,\alpha)$ defines a flow of diffeomorphisms for almost every $\alpha\in A$ (see below). Our main theorem 
states that under Hypothesis~\ref{hyp:7}, there exists a Lipschitz flow associated to $\phi$
even when~$\phi$ is not a stable almost flow. Here, we consider only Itô integrals as with the Stratonovich,
similar results require more regularity. 

Our main result below is closely connected to Proposition 4.3 in \cite{davie05a}.
We denote by $\cCloc^{1+\beta}$ the space of locally $(1+\beta)$-Hölder continuous functions. 

\begin{theorem} 
    \label{thm:diffeo}
    Assume Hypothesis~\ref{hyp:7}, and let $X$ be the unique 
    solution to the SDE~\eqref{eq:ito:1}. 
    Set $\psi_{t,s}(a)\eqdef X_t\circ X_{s}^{-1}(a)$ for any $a\in\uV=\RR^m$
    and any $(s,t)\in\rTT^2$. Then $\psi$ is almost surely a flow of $\cCloc^{1+\beta}$-diffeomorphisms,
    $0< \beta<\gamma$, 
    in the same galaxy as the almost flow $\phi[\sigma,\cdot]$ defined by \eqref{eq:ebm:davie}, 
    and $X(a)$ is the unique D-solution in
    $\cP[\phi[\sigma,\cdot],a]$. 
\end{theorem}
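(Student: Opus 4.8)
The plan is to establish the three assertions of Theorem~\ref{thm:diffeo} in sequence: (i) that $\psi$ is almost surely a flow of $\cCloc^{1+\beta}$-diffeomorphisms, (ii) that $\psi$ lies in the same galaxy as $\phi[\sigma,\cdot]$, and (iii) that $X(a)$ is the unique D-solution in $\cP[\phi[\sigma,\cdot],a]$. The first point is essentially a citation of Kunita's classical theory: under Hypothesis~\ref{hyp:7}, the strong solution map $a\mapsto X_t(a,\alpha)$ of \eqref{eq:ito:1} is almost surely a flow of diffeomorphisms, and since $\sigma\in\cCb^{1+\gamma}$ one gets spatial regularity $\cCloc^{1+\beta}$ for every $0<\beta<\gamma$ (Kunita, \cite{kunita_saint_flour}); the flow property $\psi_{t,s}\circ\psi_{s,r}=\psi_{t,r}$ follows from $X_t\circ X_s^{-1}\circ X_s\circ X_r^{-1}=X_t\circ X_r^{-1}$, so $\diff\psi=0$ and $\psi$ is literally a flow in the sense of the paper. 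One should check that $\psi\in\cF$ — i.e. that $\widehat\psi_{t,s}=\psi_{t,s}-\id$ is of class $\cO$ with a suitable $\delta$ — which amounts to a moment/continuity estimate on $X_t\circ X_s^{-1}-\id$ of the right order in $\omega_{s,t}$; this is the kind of estimate Kunita's theory supplies.

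The heart of the matter is point (ii): showing $\normsup{\psi_{t,s}-\phi_{t,s}[\sigma,\cdot]}\leq K\varpi(\omega_{s,t})$ for a (random) constant $K$, with $\varpi(x)=x^{(2+\gamma)/p}$ or the relevant exponent $>1$. The strategy is to compare, for fixed $a$, the exact increment $\psi_{t,s}(a)=X_t(X_s^{-1}(a))$ with the one-step Davie scheme value $\phi_{t,s}[\sigma,\alpha](a)=a+\sigma(a)B_{s,t}+\dD\sigma(a)\sigma(a)\bB^{(2)}_{s,t}$. Writing $b=X_s^{-1}(a)$ so that $X_s(b)=a$, one expands $X_t(b)-X_s(b)=\int_s^t \sigma(X_r(b))\vd B_r$ via Itô's formula / stochastic Taylor expansion: the leading term is $\sigma(X_s(b))B_{s,t}=\sigma(a)B_{s,t}$, the next is $\dD\sigma(a)\sigma(a)\int_s^t B_{s,r}\otimes \dd B_r=\dD\sigma(a)\sigma(a)\bB^{(2)}_{s,t}$, and the remainder is controlled using $\sigma\in\cCb^{1+\gamma}$ together with the Burkholder–Davis–Gundy inequality and a Kolmogorov/Garsia–Rodemich–Rumsey argument to turn $\uL^k$-bounds on the remainder (uniform in $a$, using the flow's Lipschitz bounds to control $X_r(b)-a$) into an almost-sure bound of order $\omega_{s,t}^{(2+\gamma)/p}$ uniformly over the simplex. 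This is where the quantitative content sits, and I expect it to be the main obstacle: one needs the remainder estimate to be uniform in $a\in\RR^m$ (not just locally), which requires care since $\sigma$ is only bounded with bounded derivatives but the BDG constants and the regularity exponents must be tracked; passing from moment bounds at fixed $(s,t)$ to a uniform modulus over $\rTT^2$ via GRR is the standard but delicate step. One may instead invoke the already-cited Proposition~4.3 of \cite{davie05a} and the connection with Kunita's estimates, which is precisely what the remark preceding the theorem signals.

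Once (i) and (ii) are in hand, point (iii) is almost immediate from the machinery of the paper. Since $\psi$ is a flow and $\phi[\sigma,\cdot]$ is an almost flow in the same galaxy as $\psi$, Lemma~\ref{lem:2} (its galaxy invariance clause) gives $\cP_\pi[\phi,a]=\cP_\pi[\psi,a]$, and passing to the limit (Lemma~\ref{lem:convergence}) gives $\cP[\phi[\sigma,\cdot],a]=\cP[\psi,a]$. But for a genuine flow $\psi$, the path $t\mapsto \psi_{t,0}(a)=X_t(a)$ satisfies $\psi_{t,s}(X_s(a))=X_t(a)$ exactly, so $X(a)\in\cP[\psi,a,0]\subset\cP[\psi,a]$; hence $X(a)$ is a D-solution for $\phi[\sigma,\cdot]$. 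For uniqueness one uses that $\psi$ is Lipschitz (a $\cCloc^{1+\beta}$-diffeomorphism flow is, on the time horizon $\TT$, Lipschitz in space with an $\uL^k$ constant for all $k$, by Kunita) together with the uniqueness result for D-solutions associated to a Lipschitz flow — this is the appendix result on uniqueness of D-solutions referred to in the outline, or equivalently \cite[results of]{brault1} stating that a Lipschitz flow admits a unique D-solution from each starting point. Thus $\cP[\phi[\sigma,\cdot],a]=\{X(a)\}$, completing the proof. The subtlety to flag is that the uniqueness argument must not rely on $\phi$ being a \emph{stable} almost flow — the whole point of the theorem — so one must route uniqueness through the Lipschitz regularity of the limiting flow $\psi$ itself rather than through Proposition~\ref{prop:convergence}.
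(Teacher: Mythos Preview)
Your proposal is correct and matches the paper's \emph{second} proof: moment bounds on the remainder $\Psi_{s,t}(a)=X_{s,t}(a)-\sigma(X_s(a))B_{s,t}-\dD\sigma(X_s(a))\sigma(X_s(a))\bB^{(2)}_{s,t}$ (Lemma~\ref{lem:phi-moments}, taken from \cite{davie05a}) are upgraded to an almost-sure bound $\abs{\Psi_{s,t}(a)}\leq K\abs{t-s}^\theta$, $\theta>1$, via a dyadic Kolmogorov--Chentsov argument (Proposition~\ref{prop:ito-davie-sol}); Kunita's Theorem~\ref{the:kunita_saint_flour} supplies the $\cCloc^{1+\beta}$-diffeomorphism property, and uniqueness goes through Proposition~\ref{prop:uniqueness} in the appendix applied to the flow $\psi$ (locally of class~$\cO$ by Example~\ref{ex:1}), exactly as you outline. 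The paper also gives a \emph{first} proof by regularization that you do not mention: approximate $\sigma$ by $\sigma_n\in\cCb^3$, so that the SDE solution $X^n$ is simultaneously an RDE/D-solution for $\phi^n$ with the uniform constant $L$ of~\eqref{eq:L}; Kunita's convergence theorems plus a BDG/Sobolev uniform-integrability argument give $X^{n_k}\to X$ a.s.\ in $\normf{\cG_{T,N}}{\cdot}$, and Lemma~\ref{lem:convergence2} then yields $X(a)\in\cP[\phi,a,L]$ directly. This alternative route sidesteps the Kolmogorov step entirely and, more to the point, delivers for free the uniformity in $a$ that you rightly flag as the delicate issue---the constant $L$ comes from the almost-flow machinery and is $a$-independent from the outset, whereas in your (and the paper's second) approach one must track that the moment constants in Lemma~\ref{lem:phi-moments} depend only on $\normf{\cCb^{1+\gamma}}{\sigma}$.
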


As the flow associated to the RDE is Lipschitz, some convergence results in \cite{brault2}
provides us a rate of convergence of discrete approximations, which is weaker as the one shown 
in Section~\ref{sec:D-sol} when stable almost flows are used. 
Here, the discrete approximation constructed from the Davie almost flow is the now classical 
\emph{Milstein scheme}~\cite{kloeden,kloeden3}. 

The pathwise rate of convergence of Itô-Taylor approximations, including the Milstein schemes, 
have been studied in \cite{talay,kloeden2,kloeden3,jentzen2}. For $\sigma\in\cC^3$, the almost sure rate of
convergence is $1-\epsilon$ for any $\epsilon>0$. Here, we consider $\sigma\in\cC^{1+\gamma}$ with 
$\gamma\leq 1$. When $\sigma\in\cCb^{2+\gamma}$, the Davie approximation is a stable almost
flow and we obtain a rate of convergence of $(2+\gamma)/p-1$ for any $p>2$, hence of order $\gamma/2-\epsilon$. 
For $\sigma\in\cCb^3$, we obtain a rate of convergence not as good as the one of P.~Kloeden and A.~Neuenkirch~\cite{kloeden2}.
Yet the main point of this section is to study the rate of convergence for an almost flow not necessarily 
stable, under weak regularity conditions.

\begin{corollary}
  \label{cor:milstein}
  Assume Hypothesis~\ref{hyp:7}, then the numerical  scheme $X^\pi$ associated
  to the Davie approximation $\phi$ given by     \eqref{eq:ebm:davie}
  with the initial condition $a\in\RR^m$
  converges almost surely to $X$, the unique solution to the SDE
  \eqref{eq:ito:1} with the rate of convergence
  $\Theta(\pi)\eqdef (\mesh\pi)^{\frac{\gamma}{2}-\epsilon},$ for all $\epsilon>0$
  such that $\epsilon<\frac{\gamma}{2}$.
\end{corollary}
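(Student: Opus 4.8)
The plan is to combine Theorem~\ref{thm:diffeo} with the rate-of-convergence results already developed for Lipschitz flows in \cite{brault2}. Theorem~\ref{thm:diffeo} tells us that, almost surely, the map $\psi_{t,s}(a)=X_t\circ X_s^{-1}(a)$ is a Lipschitz flow in the same galaxy as the Davie almost flow $\phi=\phi[\sigma,\cdot]$, and that $X(a)$ is the unique D-solution in $\cP[\phi,a]$. The numerical scheme $X^\pi$ associated to $\phi$ is by definition the Milstein scheme, and by Lemma~\ref{lem:2} it belongs to $\cP_\pi[\phi,a,L]$. So the task reduces to: (i) quantify $d_\infty(X^\pi, X)$, i.e.\ the distance between the numerical scheme and the unique D-solution, when the limiting flow is Lipschitz; (ii) identify the control $\varpi$ and hence the power of $\mesh\pi$ that comes out; (iii) convert an $\uL^k$-type or Borel--Cantelli estimate into an almost sure statement.

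First I would recall the relevant convergence estimate from \cite{brault2}: when an almost flow is in the same galaxy as a Lipschitz flow $\psi$, the numerical scheme converges to the D-solution with an error controlled by $\mu_{0,T}(\pi)\,\omega_{0,T}$, or more precisely by a bound of the form $C\sum_{[u,v]\in\pi}\varpi(\omega_{u,v})$ times the Lipschitz constant of the flow. Here the natural control, as computed in the RDE subsection (see \eqref{eq:23} and the surrounding discussion applied to $\sigma\in\cCb^{1+\gamma}$, i.e.\ taking the Davie approximation \eqref{eq:ebm:davie}), is $\varpi(x)=x^{(2+\gamma)/p}$ for any $p>2$, with $\omega_{s,t}$ comparable to $t-s$ on the Brownian scale up to the usual logarithmic corrections. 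Thus the per-step error is $O\big((\mesh\pi)^{(2+\gamma)/p-1}\big)$ summed over $O(1/\mesh\pi)$ steps, giving a global rate $(\mesh\pi)^{(2+\gamma)/p-1}$; optimising over $p>2$ (letting $p\downarrow 2$) yields the exponent $\gamma/2$, and the $\epsilon$-loss absorbs both the restriction $p>2$ and the a.s.\ modulus-of-continuity factors for Brownian motion (and its iterated integral $\bB^{(2)}$, which lives at Hölder scale $1-\epsilon$ in time).

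The key technical point is that the Lipschitz constant of the flow $\psi$ and the galaxy constant $K$ in \eqref{eq:gal:1} relating $\phi$ and $\psi$ are \emph{random but almost surely finite}: Theorem~\ref{thm:diffeo} gives $\psi\in\cCloc^{1+\beta}$, and Kunita-type moment bounds give that the relevant Lipschitz/Hölder seminorms of $X$, $X^{-1}$ and $\bB$ lie in $\uL^k$ for all $k$. So I would: fix $\epsilon>0$; for each $n$ take the dyadic (or any mesh-$2^{-n}$) partition $\pi_n$; bound $\PP\big(\normsup{X^{\pi_n}-X}> (\mesh\pi_n)^{\gamma/2-\epsilon}\big)$ using Markov's inequality on a high moment of the (random) constants times the deterministic rate $(\mesh\pi_n)^{\gamma/2-\epsilon/2}$, so that the probabilities are summable; apply Borel--Cantelli to get a.s.\ convergence along $\pi_n$; and finally upgrade to convergence along all partitions via the uniform Cauchy estimate \eqref{eq:36}-type bound (which only needs the galaxy/Lipschitz structure, hence holds with the same random constants).

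The main obstacle I expect is the careful handling of the randomness of the constants together with the suboptimality at the endpoint $p=2$: one cannot take $p=2$ directly (the rough-path estimates degenerate), so the argument must run with $p=2+\eta$ for small $\eta=\eta(\epsilon)$ and then show the a.s.\ Brownian time-regularity corrections (the $\log$ factors in the Lévy modulus of $B$ and $\bB^{(2)}$) are swallowed by the remaining $\epsilon/2$ budget. A secondary point is to confirm that the convergence estimates of \cite{brault2} for almost flows in the galaxy of a Lipschitz flow are genuinely \emph{pathwise} (deterministic, given the realisation of $\bB$) so that the only probabilistic input is the integrability of a few norms; given that Theorem~\ref{thm:diffeo} has already been established, this should be routine. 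Once these pieces are in place, the statement $\Theta(\pi)=(\mesh\pi)^{\gamma/2-\epsilon}$ follows directly.
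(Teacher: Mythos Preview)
Your core strategy---invoke Theorem~\ref{thm:diffeo} to obtain a Lipschitz flow $\psi$ in the same galaxy as the Davie almost flow $\phi$, identify $\varpi(x)=x^{(2+\gamma)/p}$ with $p>2$ (equivalently $\varpi(x)=x^{(2+\gamma)(1/2-\epsilon')}$), and then appeal to a convergence result for numerical schemes in the presence of a Lipschitz flow---is exactly what the paper does, and your computation of the exponent $\gamma/2-\epsilon$ is correct.

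Where you diverge is in step~(iii). The Borel--Cantelli machinery (moment bounds on the random constants, Markov's inequality, dyadic partitions, then an upgrade to general partitions) is entirely unnecessary. The convergence result the paper cites, \cite[Theorem~4.3]{brault1}, is a \emph{deterministic} statement: once, for a fixed realisation~$\alpha$, the almost flow $\phi[\sigma,\alpha]$ lies in the galaxy of a Lipschitz (more precisely $\cCloc^{1+\beta}$) flow $\psi(\alpha)$, the bound $\normsup{X^\pi-X}\leq C(\alpha)\,\Theta(\pi)$ holds for \emph{every} partition $\pi$ simultaneously, with a random but almost-surely finite constant $C(\alpha)$. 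Theorem~\ref{thm:diffeo} supplies exactly this pathwise hypothesis on a set of full probability, so the almost-sure rate is immediate---no integrability of $C(\alpha)$ is required, only its a.s.\ finiteness. In other words, your ``secondary point'' (that the relevant estimates are pathwise) is in fact the whole point: once you accept it, your plan collapses to a three-line argument, which is precisely the paper's proof. Note also that you cannot invoke the Cauchy estimate~\eqref{eq:36} for the upgrade step, since that relies on Hypothesis~\ref{hyp:stability} (stable almost flow), which is precisely what may fail here; the Lipschitz-flow convergence theorem from \cite{brault1} bypasses this.
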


\begin{proof}[Proof of Corollary~\ref{cor:milstein}]
  From Theorem~\ref{thm:diffeo}, there exists a flow $\psi$ of regularity $\cCloc^{1+\beta}$
  (\text{$0<\beta<\gamma$}) in the galaxy of $\phi$ with
  $\varpi(x)=x^{(2+\gamma)\left(\frac{1}{2}-\epsilon'\right)}$ for all $\frac{1}{2}>\epsilon'>0$. 
  Thus, according to \cite[Theorem 4.3]{brault1},
  $X^\pi$ converges almost surely to $X$
  with a rate of convergence
  $\Theta(\pi)=\pi^{\frac{\gamma}{2}-\epsilon}$ for all
  $\frac{\gamma}{2}>\epsilon>0$ and any initial condition $a\in\RR^m$.
\end{proof}
We will give two proofs of Theorem~\ref{thm:diffeo}, one being based on a regularization argument
and the second one based on the Kolmogorov-Chentsov continuity theorem.

\begin{notation}
    Let $\Omega_N$ be the ball of radius $N>0$ of $\RR^m$ and centered on $0$.
  We denote $\cG_{T,N}\eqdef \cC^0([0,T]\times\Omega_N,\RR^m)$ equipped with the
  norm
  \begin{equation}
      \label{def:norm:cg}
      \normf{\cG_{T,N}}{x}\eqdef \sup_{t\in [0,T]}\sup_{a\in\Omega_N}\abs{x_t(a)},\quad \forall x\in \cG_{T,N}.
\end{equation}
\end{notation}

\begin{theorem}[{\cite[Theorem 3.1 p.218]{kunita_saint_flour}}]
  \label{the:kunita_saint_flour}
If $\sigma$ is of class $\cC^{k+\gamma}_b$ with $\gamma\in (0,1)$
and $k\geq 1$ then the solution map $(t,a)\mapsto X_{t}(a)$ is
continuous a.s. and for all $t\in [0,T]$
$X_t(\cdot)$ is a $\cC^{k+\beta}$-diffeomorphism a.s. with $0\leq \beta<\gamma$. Moreover, for all $t\geq
0$, $a\in \RR^m$,
\begin{align}
\label{eq:DX-kunita}
\dD X_t(a)=\Id+\int_0^t\dD\sigma(X_s(a))\dD X_s(a)\vd {B_s}.
\end{align}
\end{theorem}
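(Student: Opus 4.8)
The plan is to follow the classical argument of H.~Kunita: combine the Kolmogorov--Chentsov continuity criterion for random fields on $[0,T]\times\RR^m$ with $\uL^p$ and reciprocal $\uL^p$ moment estimates for the solution map. Using the Burkholder--Davis--Gundy inequality, the boundedness and Lipschitz character of $\sigma$ (which follow from $\sigma\in\cCb^{1+\gamma}$), and Gronwall's lemma, one first records that for every $p\geq 2$ there is a constant $C_p$ with
\begin{equation*}
\EE\abs{X_t(a)-X_s(b)}^p\leq C_p\Paren*{\abs{a-b}^p+\abs{t-s}^{p/2}},\qquad (s,a),(t,b)\in[0,T]\times\RR^m.
\end{equation*}
Taking $p$ large, Kolmogorov--Chentsov produces a modification for which $(t,a)\mapsto X_t(a)$ is jointly continuous a.s., which is the first assertion.

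Next I would prove that $X_t(\cdot)$ is injective, simultaneously for all $t$. For $a\neq b$ set $\xi_t\eqdef X_t(a)-X_t(b)$, so that $\dd\xi_t=(\sigma(X_t(a))-\sigma(X_t(b)))\dd B_t$; no ellipticity is needed because the diffusion coefficient of $\xi$ is bounded by a constant times $\abs{\xi_t}$, so Itô's formula applied to $\abs{\xi_t}^{-p}$ yields an SDE whose coefficients are controlled by $\abs{\xi_t}^{-p}$, whence $\EE\abs{X_t(a)-X_t(b)}^{-p}\leq C_p\abs{a-b}^{-p}$. A Kolmogorov-type argument applied to the field $(a,b)\mapsto\abs{X_t(a)-X_t(b)}^{-1}$ then gives that, a.s., $X_t(a)\neq X_t(b)$ for all $t$ and all $a\neq b$. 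For surjectivity, pass to the one-point compactification $S^m=\RR^m\cup\Set{\infty}$: from Itô's formula and the linear growth of $\sigma$ one gets $\EE(1+\abs{X_t(a)}^2)^{-p}\leq C_p(1+\abs a^2)^{-p}$, so (Kolmogorov applied to $a\mapsto(1+\abs{X_t(a)}^2)^{-1}$, set to $0$ at $\infty$) $X_t$ extends to a continuous $\hat X_t\colon S^m\to S^m$ with $\hat X_t(\infty)=\infty$, jointly continuous in $t$. Since $\hat X_0=\Id$ and $t\mapsto\hat X_t$ is a homotopy through maps of $S^m$ fixing $\infty$, $\hat X_t$ has degree one, hence is onto; with injectivity it is a homeomorphism of $S^m$, so $X_t$ is a homeomorphism of $\RR^m$.

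Differentiability in $a$ uses the same tools. The difference quotients $h^{-1}(X_t(a+he_i)-X_t(a))$ are shown to be Cauchy in $\uL^p$ as $h\to0$, uniformly on compacts, by comparing the SDE they satisfy with the linearized (affine, bounded-coefficient) equation; their limit $\partial_i X_t(a)$ thus exists, is continuous, and solves $\dD X_t(a)=\Id+\int_0^t\dD\sigma(X_s(a))\dD X_s(a)\,\dd B_s$, which is~\eqref{eq:DX-kunita}. Iterating, the derivatives up to order $k$ exist and solve SDEs that are polynomial in the lower-order derivatives with coefficients built from $\dD^j\sigma$ ($j\leq k$), all bounded; the $\gamma$-Hölder continuity of $\dD^k\sigma$ gives $\EE\abs{\dD^k X_t(a)-\dD^k X_t(b)}^p\leq C_p\abs{a-b}^{p\gamma}$, so for $p$ large Kolmogorov furnishes a modification whose $k$-th derivative is jointly continuous and $\beta$-Hölder in $a$ for every $\beta<\gamma$. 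Finally $\dD X_t(a)$ is invertible, since Liouville's formula expresses $\det\dD X_t(a)$ as $\exp$ of an explicit semimartingale (alternatively, estimate the reciprocal moment of the determinant directly); by the inverse function theorem $X_t$ is a local $\cC^{k+\beta}$-diffeomorphism, and being a homeomorphism of $\RR^m$ it is a global one, the regularity of $X_t^{-1}$ following by composition from that of $X_t$ and $(\dD X_t)^{-1}$.

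The main obstacle is the chain of reciprocal-moment estimates, for $\abs{X_t(a)-X_t(b)}^{-p}$ and for $(1+\abs{X_t(a)}^2)^{-p}$, together with the compactification/degree argument for surjectivity: one must check carefully that no ellipticity hypothesis enters and that the Kolmogorov criterion genuinely applies to these auxiliary fields, which are singular on the diagonal, respectively at infinity. The regularity bookkeeping for the higher-order derivative SDEs and the sharp range $\beta<\gamma$ is routine but lengthy.
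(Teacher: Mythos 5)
The paper does not prove this statement: it is quoted directly from Kunita's Saint-Flour notes, so there is no internal proof to compare against. Your outline reproduces the standard argument of that reference — Kolmogorov--Chentsov combined with $\uL^p$ and reciprocal $\uL^p$ moment bounds for joint continuity and injectivity, the one-point-compactification/degree argument for surjectivity, and $\uL^p$-convergence of difference quotients to the solution of the linearized SDE for \eqref{eq:DX-kunita} and the higher derivatives — and the steps you identify, including the range $0\leq\beta<\gamma$ coming from the $\gamma$-Hölder continuity of $\dD^k\sigma$, are the correct ones. The only caveat is that this is a proof plan rather than a proof: the reciprocal-moment lemmas and the application of the Kolmogorov criterion to the auxiliary fields that are singular on the diagonal and at infinity, which you rightly flag as the main obstacles, are precisely where the substantive work lies in Kunita's text.
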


\begin{proof}[First proof of Theorem~\ref{thm:diffeo}]
Let $\sigma_n\in\cCb^{1+\gamma}(\RR^m)$  with $\gamma>0$
such that $\normf{\cC^{1+\gamma}_b}{\sigma_n-\sigma}\to 0$
as $n\to\infty$.
$\normhold{\gamma}{\sigma_n}\leq \mu\eqdef  \normhold{\gamma}{\sigma}$
and $\sigma_n\in\cCb^3$.

Denote by $X^n$ the solution map to $X^n_t(a)=a+\int_0^t \sigma^n(X^n_s(a))\vd B_s$.

Since $\sigma_n\in\cCb^3$, $X^n(a)$ is also a solution to the RDE  $X_t^n(a)=a+\int_0^t\sigma_n(X_s^n(a))\vd {\bB_s}$
with $\varpi(x)\eqdef x^{(2+\gamma)/p}$.
(See among others \cite{coutin-lejay3,lejay_victoir} for the Itô case and \cite{ledoux,bass,friz} for 
the Stratonovich case to which a Itô-Stratonovich correction term may be applied).
As solutions to RDE are also D-solutions, $X^n(a)$ is associated to
$\phi^n_{t,s}(a)\eqdef a+\sigma_n(a)B_{s,t} +\dD\sigma_n(a)\cdot\sigma_n(a)\bB^{(2)}_{s,t}$.

We know from \cite[Theorems 2.3 and 2.5]{kunita86a} that $\Set{X^n}_n$ converges in probability 
to~$X$ with respect to the topology generated by $\normf{\cG_{T,N}}{\cdot}$ for any $N>0$.

Besides, set $M^n_t(a)\eqdef X^n_t(a)-a=\int_0^t \sigma_n(X^n_s(a))\vd B_s$.
Recall that $\Omega_N\eqdef \Set{\abs{a}\leq N}$.
A direct application of the Burkholder-Davis-Gundy inequality on $M^n_t(a)$ shows that 
for any $p\geq2$, there exists a constant $C$ depending only on $\mu$, $p$ and $T$ such that 
\begin{equation*}
    \EE\Paren*{\sup_{t\in[0,T]} \normf{\uL^p(\Omega_N)}{M^n_t(\cdot)}^p}\leq C,\ \forall n. 
\end{equation*}
Similarly, with the Grownall lemma and the Burkholder-Davis-Gundy, one gets that 
for a constant $C'$ depending only on $\mu$, $p$ and $T$ such that 
\begin{equation*}
    \EE\Paren*{\sup_{t\in[0,T]} \normf{\uL^p(\Omega_N)}{\dD M^n_t(\cdot)}^p}\leq C',\ \forall n. 
\end{equation*}

With the Sobolev embedding theorem \cite[Theorem IX.16]{brezis}, for any integer $N$, when $p>m$ ($m$ being the dimension of the space), 
there exists a constant $K$ depending only on $N$ and $p$ such that 
\begin{equation*}
    \normf{\abs{a}\leq N}{M^n_t(a)}\leq K\Paren*{\normf{\uL^p(\Omega_N)}{M^n_t(\cdot)}
    +\normf{\uL^p(\Omega_N)}{D M^n_t(\cdot)}}.
\end{equation*}
Hence, for any $p>m$ and any $N>0$, 
\begin{equation*}
    \sup_{n\in\NN} \EE\Paren*{\normf{\cG_{T,N}}{M^n}^p}<+\infty.
\end{equation*}
This proves that $M^n$ is uniformly integrable. Therefore, $\Set{X^n}_{n\geq0}$ converges 
also in~$\uL^q$ to $X$ with respect to $\normf{\cG_{T,N}}{\cdot}$. 
Therefore,  there exists a subsequence $\Set{n_k}_{k}$ such that $\Set{X^{n_k}}_{k\geq 0}$
converges almost surely to $X$ along a subsequence with respect to~$\normf{\cG_{T,N}}{\cdot}$.

Thanks to \eqref{eq:22}-\eqref{eq:23}, each $\phi^n$ belong to $\cA[\delta,M]$ for 
a random function $\delta$ and a random constant $M$ which depend only 
on $\normp{\bB}$ and $\normhold{\gamma}{\sigma}$.
With Lemma~\ref{lem:2}, $X^{n_k}(a)\in\cP[\phi,a,L]$ for a random constant $L$ which 
is uniform in $k\geq0$ and in $a$.

Lemma~\ref{lem:convergence2} implies that $X(a)\in\cP[a,\phi,L]$.
Therefore, $X(a)$ is a D-solution associated to $\phi$. 

Since $X$ is a flow of $\cC^{1+\beta}$-diffeomorphisms for any $0\leq \beta<\gamma$, 
we set $\psi_{t,s}(a)\eqdef X_t\circ X_s^{-1}(a)$ which 
defines a flow of $\cC^{1+\beta}$-diffeomorphisms.

Since $X\in\cP[\phi,a,L]$ where $L$ does not depends on $a$, 
\begin{equation*}
    \sup_{a\in\RR^N}\abs{X_t(a)-\phi_{t,s}[\sigma,\cdot](a)}\leq L\varpi(\omega_{s,t}), \ \forall (s,t)\in\rTT^2.
\end{equation*}
Therefore, 
\begin{equation*}
    \sup_{a\in\RR^N}\abs{\psi_{t,s}(a)-\phi_{t,s}[\sigma,\cdot](X_s(a))}\leq L\varpi(\omega_{s,t}), \ \forall (s,t)\in\rTT^2.
\end{equation*}
This proves that $\phi$ and $\psi$ belong to the same galaxy.

Thanks to \eqref{eq:DX-kunita}, we see that 
$a\mapsto\psi_{t,s}(a)-a$ is locally Lipschitz for each $(s,t)$ with a uniform control
which decreases to $0$ as $T$ decreases to $0$. Hence 
$\psi$ is locally a flow of class $\cO$ (see Example~\ref{ex:1}).
Proposition~\ref{prop:uniqueness} in Appendix
shows that $X(\alpha)$ is the unique D-solution associated to $\phi[\sigma,\alpha]$
for almost all $\alpha\in A$.
\end{proof}

In the following we propose another proof of Theorem~\ref{thm:diffeo}
which is essentially based
 on the classical proof of the Kolmogorov-Chentsov criterion
 \cite[Theorem 1.8]{revuz}
 and its adaptation for the rough paths \cite[Theorem 3.1]{friz14a}.

 Let us denote, for any $a\in\RR^m$ 
    and any  $(s,t)\in\rTT^2$
  \begin{equation}
    \label{eq:Phi}
     \Psi_{s,t}(a)
     \eqdef
     X_{s,t}(a)-\sigma(X_s(a))B_{s,t}-\dD\sigma(X_s(a))\cdot \sigma(X_s(a))\bB^{(2)}_{s,t},
 \end{equation}
where $X(a)$ is the Itô solution defined by \eqref{eq:ito:1}.

\begin{lemma}[{\cite[Lemma 4.1]{davie05a}}]
\label{lem:phi-moments}
  If $\sigma\in \cC_b^{1+\gamma}$ with $\gamma\in (0,1)$,
  then for any $k>0$,
  \begin{align*}
    \EE\left(\abs{\Psi_{s,t}(a)}^k\right)\leq
    C|t-s|^{k\frac{(2+\gamma)}{2}},\quad \forall a\in\RR^m, \forall (s,t)\in\rTT^2,
  \end{align*}
  where $C$ is constant that depends only on $k$,
  $\normf{\cC_b^{1+\gamma}}{\sigma}$ and $T$ and $\Psi$ is defined \mbox{by \eqref{eq:Phi}}.
\end{lemma}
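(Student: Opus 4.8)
The plan is to rewrite $\Psi_{s,t}(a)$ as a \emph{single} Itô integral with a small integrand and then to apply the Burkholder--Davis--Gundy (BDG) inequality twice, following \cite[Lemma~4.1]{davie05a}. First I would observe that $X_{s,t}(a)=\int_s^t\sigma(X_r(a))\vd B_r$, that $\sigma(X_s(a))B_{s,t}=\int_s^t\sigma(X_s(a))\vd B_r$, and that, since $\bB^{(2)}_{s,t}=\int_s^t B_{s,r}\otimes\vd B_r$, the second-order Davie term equals $\int_s^t\dD\sigma(X_s(a))[\sigma(X_s(a))B_{s,r}]\vd B_r$. Consequently
\begin{equation*}
\Psi_{s,t}(a)=\int_s^t G_r\vd B_r,\qquad G_r\eqdef\sigma(X_r(a))-\sigma(X_s(a))-\dD\sigma(X_s(a))\big[\sigma(X_s(a))B_{s,r}\big],
\end{equation*}
where $G_r$ takes values in $L(\RR^d,\RR^m)$.

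Next I would split $G_r$ with a first-order Taylor expansion of $\sigma$ at $X_s(a)$. Writing $\sigma(X_r(a))-\sigma(X_s(a))=\int_0^1\dD\sigma\big(X_s(a)+\theta X_{s,r}(a)\big)\vd\theta\,[X_{s,r}(a)]$ and $X_{s,r}(a)-\sigma(X_s(a))B_{s,r}=\int_s^r\big[\sigma(X_u(a))-\sigma(X_s(a))\big]\vd B_u\eqdef H_r$, one obtains $G_r=G_r^{(1)}+\dD\sigma(X_s(a))[H_r]$ with $G_r^{(1)}\eqdef\int_0^1\big[\dD\sigma(X_s(a)+\theta X_{s,r}(a))-\dD\sigma(X_s(a))\big]\vd\theta\,[X_{s,r}(a)]$. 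The $\gamma$-Hölder bound on $\dD\sigma$ yields $\abs{G_r^{(1)}}\leq\normhold{\gamma}{\dD\sigma}\abs{X_{s,r}(a)}^{1+\gamma}$, and trivially $\abs{\dD\sigma(X_s(a))[H_r]}\leq\normsup{\dD\sigma}\abs{H_r}$.

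Then I would feed in the classical a priori moment bounds for~\eqref{eq:ito:1}. Since $\sigma$ is bounded, BDG applied on $[s,r]$ gives $\EE\big(\abs{X_{s,r}(a)}^k\big)\leq C\abs{r-s}^{k/2}$ for every $k\geq2$; using $\abs{\sigma(X_u(a))-\sigma(X_s(a))}\leq\normsup{\dD\sigma}\abs{X_{s,u}(a)}$, a second BDG combined with Jensen's inequality in the time variable gives $\EE\big(\abs{H_r}^k\big)\leq C\abs{r-s}^{k}$, where each $C$ depends only on $k$, $\normf{\cC_b^{1+\gamma}}{\sigma}$ and $T$. Hence $\EE\big(\abs{G_r}^k\big)\leq C\abs{r-s}^{k(1+\gamma)/2}$, the Hölder term being dominant because $\gamma<1$ and $\abs{r-s}\leq T$. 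A last application of BDG to $\Psi_{s,t}(a)=\int_s^t G_r\vd B_r$, once more with $\big(\int_s^t\abs{G_r}^2\vd r\big)^{k/2}\leq\abs{t-s}^{k/2-1}\int_s^t\abs{G_r}^k\vd r$, produces the exponent $\tfrac k2-1+\tfrac{k(1+\gamma)}2+1=\tfrac{k(2+\gamma)}2$, which is exactly the claim for $k\geq2$; the range $0<k<2$ follows at once from Jensen, $\EE\big(\abs{\Psi_{s,t}(a)}^k\big)\leq\EE\big(\abs{\Psi_{s,t}(a)}^2\big)^{k/2}$.

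I do not expect a genuine obstacle: the difficulty is entirely bookkeeping. The points to be careful about are the identification of the second-order Davie term with the iterated Itô integral $\int_s^t\dD\sigma(X_s(a))[\sigma(X_s(a))B_{s,r}]\vd B_r$ (so that $\Psi$ really is a single stochastic integral), the handling of the $L(\RR^d,\RR^m)$-valued nature of $\sigma$ and of the bilinear pairing with $\bB^{(2)}$, and the verification that every constant depends only on $k$, $\normf{\cC_b^{1+\gamma}}{\sigma}$ and $T$. Finiteness of all the moments used --- which legitimizes the BDG manipulations --- is immediate from the boundedness of $\sigma$.
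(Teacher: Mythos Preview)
Your argument is correct and is essentially the proof given in \cite[Lemma~4.1]{davie05a}: the paper does not supply its own proof of this lemma but simply invokes Davie's result, and what you have written is precisely Davie's computation---rewrite $\Psi_{s,t}(a)$ as a single Itô integral, split the integrand via a first-order Taylor remainder plus a residual stochastic integral, and apply BDG twice. The bookkeeping points you flag (tensor pairing, dependence of constants, Jensen for $0<k<2$) are the only things to watch and you have handled them properly.
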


\begin{proposition}
\label{prop:ito-davie-sol}
  We assume $\sigma\in\cC^{1+\gamma}_b$ with $\gamma\in
(0,1)$. Let $k$ be the smallest integer such that
$k>\frac{6}{\gamma}$. Then, there exists a positive random
  constant $K\in L^k(A)$ 
   such that for all $(s,t)\in\rTT^2$ and
  all $a\in\RR^m$,
  \begin{align}
    \abs{\Psi_{s,t}(a)}\leq K|t-s|^{\theta},
  \end{align}
  with $\theta\eqdef 1-\frac{3}{k}+\frac{\gamma}{2}>1$. It follows that the Itô solution $X(a)$ defined
  by \eqref{eq:ito:1} is a D-solution associated to the Davie almost flow
  defined by \eqref{eq:ebm:davie}.
\end{proposition}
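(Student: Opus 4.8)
The plan is to reduce the statement to the a.s.\ estimate $\abs{\Psi_{s,t}(a)}\leq K\abs{t-s}^{\theta}$, valid for all $(s,t)\in\rTT^2$ and all $a\in\RR^m$ with a single random constant $K\in\uL^k(A)$, and then adapt to the non-additive two-parameter field $\Psi$ of \eqref{eq:Phi} the Kolmogorov--Chentsov scheme behind \cite[Theorem~1.8]{revuz} and its rough-path version \cite[Theorem~3.1]{friz14a}, fed by the moment bound of Lemma~\ref{lem:phi-moments}. The reduction is immediate: since $\Psi_{s,t}(a)=X_t(a)-\phi_{t,s}[\sigma,\cdot](X_s(a))$ and, by \eqref{eq:21}--\eqref{eq:23}, $\phi[\sigma,\cdot]$ is an almost flow with modulus $\varpi(x)=x^{(2+\gamma)/p}$ for every $p\in(2,2+\gamma)$, the bound above together with $\theta>1$ allows one to pick $p$ so close to $2+\gamma$ that $(2+\gamma)/p\leq\theta$, whence $\abs{t-s}^{\theta}\leq C_T\varpi(\omega_{s,t})$ and $X(a)\in\cP[\phi[\sigma,\cdot],a,K']$, i.e.\ $X(a)$ is a D-solution associated to \eqref{eq:ebm:davie}.

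Beyond Lemma~\ref{lem:phi-moments}, which gives $\EE(\abs{\Psi_{s,t}(a)}^k)\leq C\abs{t-s}^{k(2+\gamma)/2}$ uniformly in $a$, I would need two further ingredients. First, a spatial estimate $\EE(\abs{\Psi_{s,t}(a)-\Psi_{s,t}(b)}^k)\leq C\abs{a-b}^{k\gamma}\abs{t-s}^{k\beta}$ for some $\beta>0$ (any comparable bound suffices): it follows from Burkholder--Davis--Gundy and Gronwall applied to \eqref{eq:ito:1} and its first variation \eqref{eq:DX-kunita}, the $\gamma$-Hölder regularity of $\sigma$ and $\dD\sigma$, and the Taylor cancellation that makes the increment $\Psi_{s,t}(a)-\Psi_{s,t}(b)$ consist only of higher-order remainders; crucially all constants depend only on $\normf{\cC^{1+\gamma}_b}{\sigma}$, $k$ and $T$, not on $a$. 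Second, the near-additivity identity obtained by inserting $u\in[s,t]$, using $X_u(a)=\phi_{u,s}[\sigma,\cdot](X_s(a))+\Psi_{s,u}(a)$ and $\phi_{t,u}=\id+\widehat\phi_{t,u}$:
\begin{multline*}
    \Psi_{s,t}(a)-\Psi_{s,u}(a)-\Psi_{u,t}(a)\\
    =\diff\phi_{t,u,s}(X_s(a))+\widehat\phi_{t,u}\bigl(\phi_{u,s}(X_s(a))+\Psi_{s,u}(a)\bigr)-\widehat\phi_{t,u}\bigl(\phi_{u,s}(X_s(a))\bigr).
\end{multline*}
By \eqref{eq:21} the first term is $\lesssim\abs{t-s}^{(2+\gamma)/q}$ for every $q\in(2,2+\gamma)$, and, since the nonlinear part of the Davie approximation is genuinely Lipschitz in space with constant $\lesssim\abs{t-u}^{1/q}$ (because $\sigma\in\cC^1_b$), the last difference is $\lesssim\abs{t-u}^{1/q}\,\abs{\Psi_{s,u}(a)}$.

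Then comes the chaining. Restricting $(s,t)$ to dyadic points $t^n_i=iT/2^n$ and applying Chebyshev and Borel--Cantelli to the two moment estimates --- over the dyadic times and, for the uniformity in $a$, over a dyadic net in space whose fineness is controlled through the spatial estimate, the boundedness of $\sigma$ (cf.\ Theorem~\ref{the:kunita_saint_flour}) taming the behaviour at spatial infinity --- produces, with $k$ the smallest integer exceeding $6/\gamma$, a random $K_0\in\uL^k(A)$, a.s.\ finite, with $\sup_a\abs{\Psi_{t^n_i,t^n_{i+1}}(a)}\leq K_0\,2^{-n\theta}$ for all $n,i$, where $\theta\eqdef(2+\gamma)/2-3/k=1-3/k+\gamma/2$; this is where $k>6/\gamma$ enters, being exactly $\theta>1$, while $\theta<(2+\gamma)/2$ leaves room in Lemma~\ref{lem:phi-moments}. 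Writing an arbitrary dyadic interval $[s,t]$ with $2^{-n}\asymp\abs{t-s}$ as a staircase of $O(n)$ consecutive dyadic pieces and telescoping the near-additivity identity, one upgrades this to $\sup_a\abs{\Psi_{s,t}(a)}\leq K\abs{t-s}^{\theta}$ for all dyadic $(s,t)$ with $K\in\uL^k(A)$: the additive part sums geometrically to $\lesssim K_0\abs{t-s}^{\theta}$, the $\diff\phi$-defects to $\lesssim\abs{t-s}^{(2+\gamma)/q}\lesssim\abs{t-s}^{\theta}$ for $q<(2+\gamma)/\theta$, and the oscillation defects are absorbed scale by scale exactly through the mechanism of the non-linear Davie lemma \cite[Lemma~9]{brault2}. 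Since $(s,t,a)\mapsto\Psi_{s,t}(a)$ is a.s.\ continuous (Theorem~\ref{the:kunita_saint_flour} and continuity of $B,\bB^{(2)}$), the bound passes to all $(s,t)\in\rTT^2$, which by the first paragraph yields $X(a)\in\cP[\phi[\sigma,\cdot],a]$.

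The delicate point I expect is precisely keeping the bound simultaneously uniform in $(s,t)$ and in the unbounded spatial variable $a\in\RR^m$ while retaining $\uL^k$-integrability of the constant: this is what forces the auxiliary spatial moment estimate, the dyadic-net-in-$a$ bookkeeping inside Borel--Cantelli, and the two-speed use of $\varpi$ --- a fast-decaying modulus ($q\downarrow2$) to kill the chaining defects and a slow one ($(2+\gamma)/p\leq\theta$) to read off the D-solution conclusion. The precise constants $k=\min\Set{j\in\NN\given j>6/\gamma}$ and $\theta=1-3/k+\gamma/2$ then appear exactly as the thresholds making this triple chaining summable while still leaving $\theta>1$.
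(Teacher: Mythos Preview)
Your core strategy---dyadic Kolmogorov--Chentsov chaining in $(s,t)$ fed by Lemma~\ref{lem:phi-moments}---is exactly the paper's, but the execution differs on two points. First, the paper does \emph{not} chain in the spatial variable: it works at a fixed $a$, and the resulting random constant has $\uL^k$-norm bounded uniformly in $a$ simply because every moment bound used is already uniform in $a$; your extra spatial H\"older estimate and the dyadic net over the unbounded domain $\RR^m$ are not in the paper and are precisely the delicate part of your sketch. Second---and this is the main simplification---the paper does not treat the additivity defect $\Psi_{u,v,w}\eqdef\Psi_{u,w}-\Psi_{u,v}-\Psi_{v,w}$ through your abstract almost-flow identity and a scale-by-scale Davie absorption. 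Instead it computes the defect \emph{explicitly} as $\Psi_{u,v,w}=S^{(1)}_{u,v}B_{v,w}+S^{(2)}_{u,v}B_{v,w}+S^{(3)}_{u,v}\bB^{(2)}_{v,w}$, where $S^{(1)}$ is the first-order Taylor remainder of $\sigma$ along $X$, $S^{(2)}$ an It\^o remainder, and $S^{(3)}$ the increment of $\dD\sigma\cdot\sigma$; each $S^{(\ell)}$ has moments bounded directly by BDG and the $\cC^{1+\gamma}_b$ hypothesis. This removes the bootstrap entirely (no $\Psi$ appears on the right-hand side), and, combined with direct $\uL^k$-summation of the series $\sum_{n\geq 0}2^{n\theta}\sup_{t\in D_n}(\cdot)$ in place of Chebyshev/Borel--Cantelli, gives $K\in\uL^k$ by the triangle inequality. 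Your route---pathwise rough-path control of $\diff\phi$ (carrying the random prefactor $\normp{\bB}$) plus a Lipschitz oscillation bound---should also close with care, but it mixes almost-sure and moment estimates and is visibly longer than the paper's direct computation.
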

\begin{proof}
  We fix the integer $k$ and the real $\theta$ as in the statement of the theorem. It is well known 
  that a constant $C_k$ depending only on $k$ exists such
  that     $\EE(\abs{B_{s,t}}^k)\leq C_k |t-s|^{k/2}$ and
    $\EE(\abs{\bB^{(2)}_{s,t}}^k)\leq C_k |t-s|^k$
for any $(s,t)\in\rTT^2$.

  For an integer $n\geq0$, we set $D_n\eqdef \left\{\frac{kT}{2^n},
    k=0,\dots,2^n\right\}$ the dyadic partition of $[0,T]$. We define
  \begin{align*}
    K_n\eqdef \sup_{t\in D_n}\abs{\Psi_{t,t+T2^{-n}}(a)},\ 
    L_n\eqdef \sup_{t\in D_n}\abs{B_{t,t+T2^{-n}}}
    \text{ and }M_n\eqdef \sup_{t\in D_n}\abs{\bB^{(2)}_{t,t+T2^{-n}}}.
  \end{align*}
  It follows from Lemma~\ref{lem:phi-moments} that for any $k\geq 0$,
  \begin{align}
    \label{eq:Knk}
    \EE(K_n^k)\leq \sum_{t\in
                D_n}\EE(\abs{\Psi_{t,t+T2^{-n}}(a)}^k)
              \leq C2^n2^{-nk\frac{(2+\gamma)}{2}}.
  \end{align}
  In a same way,
  \begin{align}
    \label{eq:LM}
  \EE(L_n^k)\leq C_k2^{-n(k/2-1)}\quad \text{and}\quad \EE(M_n^k)\leq C_k2^{-n(k-1)}.    
  \end{align}

  For $s<t$ in $\bigcup_{n\in\NN} D_n$, let $m$ be an integer such that
  $2^{-(m+1)}<\abs{t-s}\leq 2^{-m}$.
  There is an integer $N$ and a partition $s=\tau_0<\tau_1<\dots<\tau_{N-1}<\tau_N=t$ of
  $[s,t]$ with the following properties

  \begin{itemize}
  \item   for each $i=0,\dots,N-1$, there $n\geq m+1$, such that $\tau_i,\tau_{i+1}$ are
  two consecutive points in $D_n$,
\item at most two consecutive points that have the same length.
\end{itemize}
Setting
$\Psi_{u,v,w}(a)\eqdef \Psi_{u,w}(a)-(\Psi_{u,v}(a)+\Psi_{v,w}(a))$
for $0\leq u\leq v\leq w\leq T$, we have
\begin{align}
 \label{eq:phitwoterm} \Psi_{s,t}(a)=\sum_{i=0}^{N-1}\Psi_{\tau_i,\tau_{i+1}}(a)+\sum_{i=0}^{N-1}\Psi_{\tau_i,\tau_{i+1},t}(a).
\end{align}
We start by bounding the first right hand side of the above equation
\begin{align}
  \label{eq:K_theta}
\frac{\left|\sum_{i=0}^{N-1}\Psi_{\tau_i,\tau_{i+1}}(a)\right|}{|t-s|^\theta}\leq
\frac{\sum_{i=0}^{N-1}\abs{\Psi_{\tau_i,\tau_{i+1}}(a)}}{2^{(m+1)\theta}}
  \leq 2\sum_{n\geq m+1}K_n2^{n\theta}\leq K_\theta,
\end{align}
where $K_\theta\eqdef  2\sum_{n\geq 0}K_n2^{n\theta}$ is a random constant in
$L^k(A)$.
Indeed,
\begin{align*}
  \normf{L^k}{K_\theta}\leq 2\sum_{n\geq 0}\normf{L^k}{K_n}2^{- n\theta}\leq
  2C\sum_{n\geq 0}2^{-n(\frac{2+\gamma}{2}-\frac{1}{k}-\theta)}.
\end{align*}
The above series is convergent because $\frac{2+\gamma}{2}-\frac{1}{k}-\theta=\frac{2}{k}>0$.

To bound the second right hand side, we note that
\begin{align}
  \label{eq:Phi-computed}
  \Psi_{u,v,w}(a)=S^{(1)}_{u,v} B_{v,w}+S^{(2)}_{u,v}B_{v,w}+S^{(3)}_{u,v}\bB^{(2)}_{v,w},
\end{align}
with
\begin{align*}
  S^{(1)}_{u,v}&\eqdef \sigma(X_v)-\sigma(X_u)-\dD\sigma(X_u)(X_v-X_u),\\
  S^{(2)}_{u,v}&\eqdef \dD\sigma(X_u)\int_u^v(\sigma(X_z)-\sigma(X_u))\vd B_z,\\
  S^{(3)}_{u,v}&\eqdef \dD\sigma(X_v)\sigma(X_v)-\dD\sigma(X_u)\sigma(X_u).
\end{align*}
We bound the moments of this three terms. For any $k>0$,
\begin{align}
  \label{eq:s1}
    \EE(\abs{S^{(1)}_{u,v}}^k)\leq
\normf{\gamma}{\dD\sigma}^k\EE(\abs{X_v-X_u}^{k(1+\gamma)})
  \leq  \normf{\gamma}{\dD\sigma}^kC_1\normsup{\sigma}^{k(1+\gamma)}\abs{v-u}^{k\frac{1+\gamma}{2}},
\end{align}
where $C_1\geq 0$ is a constant that depends only on $k$ and $\gamma$.
Similarly, 
\begin{align}
  \label{eq:s2}
  \EE(\abs{S^{(2)}_{u,v}}^k)&\leq \normsup{\dD\sigma}^{2k}C_2^2\normsup{\sigma}^k|v-u|^{k},
    \\
  \label{eq:s3}
\text{and }
\EE(\abs{S^{(3)}_{u,v}}^k)
  &\leq  \left[\normsup{\dD\sigma}^2\normsup{\sigma}^k+\normf{\gamma}{\dD \sigma}^k\normsup{\sigma}^{k(1+\gamma)}T^{k\frac{(1-\gamma)}{2}}\right]C_3\abs{v-u}^{k\frac{\gamma}{2}},
\end{align}
where $C_2$, $C_3\geq 0$ are constants depending only on $k$ and $\gamma$.
It follows from  \eqref{eq:Phi-computed}
\begin{align}
  \label{eq:Phi2}
\left|\sum_{i=0}^{N-1}\Psi_{\tau_i,\tau_{i+1},t}(a)\right|&\leq\sup_{i}\abs{B_{\tau_{i+1},t}}\sum_{i=0}^{N-1}\left(\abs{S^{(1)}_{\tau_i,\tau_{i+1}}}+\abs{S^{(2)}_{\tau_i,\tau_{i+1}}}\right)+
\sup_i\abs{\bB^{(2)}_{\tau_i,t}}\sum_{i=0}^{N-1}\abs{S^{(3)}_{\tau_i,\tau_{i+1}}}.
\end{align}
Yet, we have
\begin{align}
  \label{eq:supB}
  \sup_i\abs{B_{\tau_i,t}}\leq \sum_{i=0}^{N-1}\abs{B_{\tau_i,\tau_{i+1}}}\leq
  2\sum_{n\geq m+1}L_n.
\end{align}
Using Chen's relation
\begin{align}
  \label{eq:supBB}
  \sup_i\abs{\bB^{(2)}_{\tau_i,t}}&\leq
    \sum_{i=0}^{N-1}\abs{\bB^{(2)}_{\tau_i,\tau_{i+1}}}+\sup_{i}\abs{B_{\tau_{i+1},t}}\sum_{i=0}^{N-1}\abs{B_{\tau_i,\tau_{i+1}
    }}
  &\leq 2\sum_{n\geq m+1}M_n+\left(2\sum_{n\geq m+1}L_n\right)^2.
\end{align}
Thus, combining \eqref{eq:Phi2}, \eqref{eq:supB} and \eqref{eq:supBB},
\begin{multline*}
  \left|\sum_{i=0}^{N-1}\Psi_{\tau_i,\tau_{i+1},t}(a)\right|
  \leq 4\sum_{n\geq m+1}L_n\sum_{n\geq
    m+1}\left(S^{(1)}_n+S^{(2)}_n\right)\\
    +\left(2\sum_{n\geq m+1}M_n
    +\left(2\sum_{n\geq m+1}L_n\right)^2\right)\left(\sum_{n\geq m+1}S^{(3)}_n\right),
\end{multline*}
where $S^{(\ell)}_n\eqdef \sup_{t\in D_n}\abs{S^{(\ell)}_{t,t+T2^{-n}}}$ for
$l\in\{1,2,3\}$.
We show with \eqref{eq:s1}, \eqref{eq:s2} and \eqref{eq:s3}, in a same
way as for $K_n$, that for $\ell\in\Set{1,2}$,
\begin{equation}
  \label{eq:sl}
\EE\left(\left[S^{(\ell)}_n\right]^k\right)\leq
  C_42^{-n\left(\frac{k(1+\gamma)}{2}-1\right)},
  \text{~and~} \EE\left(\left[S^{(3)}_n\right]^k\right) \leq C_52^{-n\left(\frac{k\gamma}{2}-1\right)},
\end{equation}
where $C_4$, $C_5$ are constants that depend on
$\normf{\cC^{1+\gamma}_b}{\sigma}$, $k$, $\gamma$ and $T$.

We recall that $\theta\eqdef 1-\frac{3}{k}+\frac{\gamma}{2}$, then $\theta>1$ and there exists a constant $\theta_1\in \left(\frac{1}{2}-\frac{1}{k},\frac{1}{2}\left(\theta+\frac{1}{k}-\frac{\gamma}{2}\right)\right)$. We have
\begin{align}
  \label{eq:K'_theta}
\frac{\left|\sum_{i=0}^{N-1}\Psi_{\tau_i,\tau_{i+1},t}(a)\right|}{\abs{t-s}^{\theta}}\leq  K'_\theta,
\end{align}
where 
\begin{multline}
  \label{eq:K'series}
  K'_\theta\eqdef 4\sum_{n\geq 0}L_n2^{n\theta_1}
      \sum_{n\geq 0}\left(S^{(1)}_n+S^{(2)}_n\right)2^{n(\theta_2-\theta_1)}
  \\
    +\left(2\sum_{n\geq 0}M_n2^{n2\theta_1}+\left(2\sum_{n\geq 0}
    L_n2^{n\theta_1}\right)^2\right)\left(\sum_{n\geq 0}S^{(3)}_n2^{n(\theta-\theta_1)}\right).
\end{multline}
The constant $K'_\theta$ is random variable in $\uL^k(A)$. Indeed,
using \eqref{eq:LM}, \eqref{eq:sl} and our choice of
$k$, $\theta_1$, $\theta$, we check that
the right hand side of \eqref{eq:K'series} contains only
converging series in $\uL^k(A)$.

Setting $K\eqdef K_\theta+K'_\theta$ and using \eqref{eq:phitwoterm}, \eqref{eq:K_theta},
\eqref{eq:K'_theta}, we obtain that for all $s<t$ in $\bigcup_{n\in\NN} D_n$,
$\abs{\Psi_{s,t}(a)}\leq K\abs{t-s}^\theta$,
with $K\in \uL^k(A)$.
By continuity of $(s,t)\mapsto \Psi_{s,t}(a)$ the above
estimation is true for all $(s,t)\in\rTT^2$. This concludes the proof.
\end{proof}

\begin{proof}[Second proof of Theorem~\ref{thm:diffeo}]

According to Proposition~\ref{prop:ito-davie-sol}, when
$\sigma\in\cC_b^{1+\gamma}$ with $\gamma\in (0,1)$, the Itô solution
$X(a)$ is a D-solution associated to Davie almost flow $\phi$.
More precisely, $X\in\cP[\phi,a,K]$, with a random constant $K$ that
does not depend on $a$. Then we conclude as in the first proof of Theorem~\ref{thm:diffeo}.
\end{proof}

\appendix

\section{Boundedness of solutions}

\subsection{Almost flows with linear or almost growth}

\label{sec:aflinear}

In \cite{brault1}, almost flows are not necessarily bounded.
In this appendix, we consider almost flows for which \eqref{eq:def:3} and \eqref{eq:def:2} are replaced by
\begin{align}
     \label{def:unbd:1}
     \abs{\phi_{t,s}(a)-a}&=\abs{\widehat{\phi}_{t,s}}\leq \delta_{t-s}N(a),\\
     \label{def:unbd:2}
     \abs{\diff\phi_{t,s,r}(a)}&\leq N(a)\varpi(\omega_{s,t})
 \end{align}
for a $\gamma$-Hölder function $N:\uV\to\RR_+$ such that $\inf_{a} N(a)>0$. 

For any $\pi$ of $\TT$ and any $(s,t)\in\rTT^2$, we write
$\phi^\pi_{t,s}\eqdef \phi_{t,t_j}\circ\dots\circ\phi_{t_i,s},$
where $[t_i,t_j]$ is the biggest interval of such kind contained in $[s,t]$.

\begin{theorem}[{\cite[Theorem 1]{brault1}}] 
    \label{thm:control}
    There exists a time horizon $T$ and constant $L\geq 1$ depending only on $\normlip{N}$, 
    $\delta$, $\omega$ and $\varpi$ such that 
    \begin{equation*}
	\abs{\phi^\pi_{t,s}(a)-\phi_{t,s}(a)}\leq L N(a) \varpi(\omega_{s,t}), \forall (s,t)\in \rTT^2
    \end{equation*}
    uniformly in the partition $\pi$ of $\TT$.
\end{theorem}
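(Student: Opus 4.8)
The plan is to rerun the Davie-lemma argument that already underlies Lemma~\ref{lem:2}, now carrying the weight $N$ through the estimates. For a partition $\pi$ and $(s,t)\in\rTT^2$ write $[t_i,t_j]$ for the largest block of consecutive points of $\pi$ lying in $[s,t]$ (possibly empty) and set $U^\pi_{s,t}(a)\eqdef\abs{\phi^\pi_{t,s}(a)-\phi_{t,s}(a)}$; the target is $U^\pi_{s,t}(a)\le L\,N(a)\,\varpi(\omega_{s,t})$ for a constant $L\ge1$, uniformly in $\pi$. Since the theorem only asserts the existence of \emph{some} small $T$, I would freely shrink $T$ until $\varpi(\omega_{0,T})$ and $\delta_T$ are as small as needed; I would also use throughout that $c_0\eqdef\inf_a N(a)>0$, which turns additive constants into multiples of $N(a)$ via $1+\lambda N(a)\le(c_0^{-1}+\lambda)N(a)$ for $\lambda\ge0$. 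The preliminary observation is that $N$ is nearly constant along partial compositions: from \eqref{def:unbd:1}, $\abs{\phi^\pi_{v,u}(a)-a}\le U^\pi_{u,v}(a)+\delta_T N(a)$, so once the inductive bound is known on an interval carrying fewer partition points one gets $\abs{\phi^\pi_{v,u}(a)-a}\le(L\varpi(\omega_{0,T})+\delta_T)N(a)$ and hence, by $\gamma$-Hölder continuity of $N$ and $N(a)^\gamma\le c_0^{\gamma-1}N(a)$, $N(\phi^\pi_{v,u}(a))\le(1+\rho(T,L))N(a)$ with $\rho(T,L)\eqdef\normhold{\gamma}{N}c_0^{\gamma-1}\bigl((L\varpi(\omega_{0,T}))^\gamma+\delta_T^\gamma\bigr)$, and $\rho(T,L)\to0$ as $T\to0$ for fixed $L$.

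The core is an induction on $j-i$, the number of sub-intervals of $\pi$ inside $[s,t]$, with exactly the case analysis of the proof of Lemma~\ref{lem:2}. If $[s,t]$ has at most one interior partition point the estimate is trivial ($U=0$, or $U^\pi_{s,t}(a)=\abs{\diff\phi_{t,t_i,s}(a)}\le N(a)\varpi(\omega_{s,t})$ by \eqref{def:unbd:2}, which is why $L\ge1$ is needed). For the inductive step I would pick $\ell$ with $i<\ell\le j$ by the $j^*$-type rule of Lemma~\ref{lem:2}, so that $\omega_{t_\ell,t}\le\tfrac12\omega_{s,t}$ and $\omega_{s,t_{\ell-1}}\le\tfrac12\omega_{s,t}$ (the degenerate subcases $\ell=j$, or no admissible $\ell$, being reduced to $\ell-1$ by peeling off the exact single step $\phi_{t_j,t_{j-1}}$, just as the ``$j^*=k$'' case there), and split $\phi^\pi_{t,s}=\phi^\pi_{t,t_\ell}\circ\phi^\pi_{t_\ell,s}$. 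With $b\eqdef\phi^\pi_{t_\ell,s}(a)$ and $b_0\eqdef\phi_{t_\ell,s}(a)$,
\begin{equation*}
\phi^\pi_{t,s}(a)-\phi_{t,s}(a)=\bigl(\phi^\pi_{t,t_\ell}(b)-\phi_{t,t_\ell}(b)\bigr)+\bigl(\phi_{t,t_\ell}(b)-\phi_{t,t_\ell}(b_0)\bigr)+\diff\phi_{t,t_\ell,s}(a).
\end{equation*}
The first summand equals $U^\pi_{t_\ell,t}(b)$ and is handled by the inductive hypothesis on $[t_\ell,t]$, the $N$-control above, and Global Hypothesis~\ref{hyp:4}, giving $\le\tfrac{\varkappa}{2}L(1+\rho(T,L))N(a)\varpi(\omega_{s,t})$; the third summand is $\le N(a)\varpi(\omega_{s,t})$ by \eqref{def:unbd:2}. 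For the middle summand I would write $\abs{\phi_{t,t_\ell}(b)-\phi_{t,t_\ell}(b_0)}\le\abs{b-b_0}+\osc\bigl(\widehat\phi_{t,t_\ell},\abs{b-b_0}\bigr)$, bound $\abs{b-b_0}=U^\pi_{s,t_\ell}(a)\le L\,N(a)\,\varpi(\omega_{s,t_\ell})$ by induction and feed this into the class-$\cO$ inequality \eqref{eq:defO} (with $\normO{\widehat\phi}\le1$) to get $\osc(\widehat\phi_{t,t_\ell},\abs{b-b_0})\le\delta_T(1+LN(a))\varpi(\omega_{s,t})$, while bounding $\abs{b-b_0}$ itself more sharply by a one-step-back expansion $\phi^\pi_{t_\ell,s}=\phi_{t_\ell,t_{\ell-1}}\circ\phi^\pi_{t_{\ell-1},s}$ together with $\omega_{s,t_{\ell-1}}\le\tfrac12\omega_{s,t}$, \eqref{eq:defO} and \eqref{def:unbd:2}, which gives $\abs{b-b_0}\le\bigl(\tfrac{\varkappa}{2}L+\delta_T(c_0^{-1}+L)+1\bigr)N(a)\varpi(\omega_{s,t})$. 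Collecting and absorbing constants into multiples of $N(a)$ yields
\begin{equation*}
U^\pi_{s,t}(a)\le\bigl(\varkappa L(1+\rho(T,L))+C\delta_T(1+L)+C\bigr)N(a)\varpi(\omega_{s,t})
\end{equation*}
for a constant $C$ depending only on $c_0$ and $\normhold{\gamma}{N}$.

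To close the induction one needs $\varkappa L(1+\rho(T,L))+C\delta_T(1+L)+C\le L$, i.e.\ $L\ge\dfrac{C(1+\delta_T)}{1-\varkappa(1+\rho(T,L))-C\delta_T}$. Since $\varkappa<1$, for $T$ small enough $\varkappa(1+\rho(T,L))+C\delta_T<1$ on any bounded range of $L$, so I would either produce $L$ as a fixed point of $L\mapsto\frac{C(1+\delta_T)}{1-\varkappa(1+\rho(T,L))-C\delta_T}$ by monotone iteration on a bounded interval, or simply fix a provisional $L_0\eqdef 1+\frac{2C}{1-\varkappa}$ and then shrink $T$ until $\rho(T,L_0)$ and $\delta_T$ are small enough that the displayed recursion with $L=L_0$ reproduces itself; the resulting $L$ is $\ge1$ and depends only on $\normhold{\gamma}{N}$, $\inf N$, $\delta$, $\omega$ and $\varpi$. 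The genuinely delicate point is precisely this self-reference — $\rho(T,L)$ involves the very constant $L$ being produced — and it is where the smallness of $T$ is essential; everything else is the routine bookkeeping of constants already seen in the proof of Lemma~\ref{lem:2}.
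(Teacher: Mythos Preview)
The paper does not actually prove this theorem: it is quoted verbatim from \cite[Theorem~1]{brault1} and stated here without proof, so there is no in-paper argument to compare against. Your approach is the right one and is essentially the argument of the cited reference: a Davie-type induction on the number of partition steps, with the $j^*$-splitting of Lemma~\ref{lem:2}, carrying the weight $N$ through by controlling $N(\phi^\pi_{v,u}(a))$ in terms of $N(a)$ via the H\"older continuity of $N$ and $\inf N>0$. You have correctly isolated the only genuinely non-routine point, namely the self-reference of $L$ through $\rho(T,L)$, and your resolution (fix a provisional $L_0$ depending only on $\varkappa$ and $c_0$, then shrink $T$) is exactly how one closes the loop. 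One cosmetic remark: the theorem statement in the paper says $L$ depends on $\normlip{N}$, but the surrounding text only assumes $N$ is $\gamma$-H\"older; you rightly work with $\normhold{\gamma}{N}$, which is what the argument actually needs.
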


Now, let us fix $T$ as in Theorem~\ref{thm:control}, 
$R\geq 0$ and set $\Omega(R)\eqdef \Set{a\in\uV\given \abs{a}\leq R}$. 

Let us consider a path $y\in\cC(\TT,\uV)$ such that 
$\abs{y_{t,s}-\phi_{t,s}(y_s)}\leq K\varpi(\omega_{s,t})$
for any $(s,t)\in\rTT^2$ and $y_0=a$ for some $a\in\Omega(R)$. 
With \eqref{def:unbd:1}-\eqref{def:unbd:2}, 
\begin{equation*}
    \abs{y_t(a)}\leq \abs{a}+\delta_TN(a)+K\varpi(\omega_{0,t}).
\end{equation*}
With $\overline{N}_R\eqdef \sup_{\abs{a}\leq R} N(a)$, 
\begin{equation}
    \label{eq:bnd:y}
    \normsup{y}\leq R+\delta_T\overline{N}_R+K\varpi(\omega_{0,T}). 
\end{equation}

Combining the above inequality with Theorem~\ref{thm:control} leads to 
the following uniform control.

\begin{corollary}
    \label{cor:1}
    If the sequence of paths $\Set{t\mapsto \phi^\pi_{t,0}(a)}_{\pi}$ converges
    to a path $y\in\cC(\TT,\uV)$, which is a D-solution, then $\normsup{y(a)}\leq R'$
    with $R'\eqdef R+\delta_T\overline{N}_R+K\overline{N}_R\varpi(\omega_{0,T})$. 
\end{corollary}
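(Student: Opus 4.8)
The plan is to let $\mesh\pi\to 0$ in the uniform estimate of Theorem~\ref{thm:control} and then feed the outcome into the a priori bound~\eqref{eq:bnd:y}. Each path $t\mapsto\phi^\pi_{t,0}(a)$ starts at $a\in\Omega(R)$, hence so does its limit, giving $y_0=a$. Theorem~\ref{thm:control} with $s=0$ states that, for the constant $K\geq 1$ it provides (written $L$ in its statement),
\begin{equation*}
    \abs{\phi^\pi_{t,0}(a)-\phi_{t,0}(a)}\leq KN(a)\varpi(\omega_{0,t}),\quad \forall t\in\TT,
\end{equation*}
uniformly in the partition $\pi$. Since the right-hand side does not depend on $\pi$ and $\phi^\pi_{\cdot,0}(a)\to y$ (at least pointwise in $t$), passing to the limit yields $\abs{y_t-\phi_{t,0}(a)}\leq KN(a)\varpi(\omega_{0,t})$ for every $t\in\TT$. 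In particular this confirms that $y$ is a D-solution, with a constant at base point $0$ that may be taken equal to $KN(a)$ — which is the only instance of the D-solution inequality needed for the sup bound.

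Next I would use~\eqref{def:unbd:1} to write $\abs{\phi_{t,0}(a)-a}\leq\delta_tN(a)\leq\delta_TN(a)$, so that
\begin{equation*}
    \abs{y_t}\leq\abs{a}+\delta_TN(a)+KN(a)\varpi(\omega_{0,t})
    \leq R+\delta_T\overline{N}_R+K\overline{N}_R\varpi(\omega_{0,T}),
\end{equation*}
where the second inequality uses $\abs{a}\leq R$, $N(a)\leq\overline{N}_R$, the monotonicity of $\varpi$, and $\omega_{0,t}\leq\omega_{0,T}$. Taking the supremum over $t\in\TT$ gives $\normsup{y}\leq R'$, as claimed. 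Equivalently, this is exactly~\eqref{eq:bnd:y} with the generic D-solution constant there replaced by $KN(a)\leq K\overline{N}_R$.

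I do not foresee any real difficulty: everything reduces to the stability of the $\pi$-uniform bound of Theorem~\ref{thm:control} under the limit $\mesh\pi\to 0$, which is immediate, followed by the elementary estimate~\eqref{eq:bnd:y}. The only mild subtlety is the bookkeeping of constants, namely that the $K$ appearing in $R'$ is the constant of Theorem~\ref{thm:control} and that the factor $\overline{N}_R$ in the last term of $R'$ arises precisely from bounding $N(a)$ for $a\in\Omega(R)$.
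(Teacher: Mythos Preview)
Your proof is correct and follows exactly the route the paper indicates: pass to the limit in the uniform estimate of Theorem~\ref{thm:control} to obtain $\abs{y_t-\phi_{t,0}(a)}\leq KN(a)\varpi(\omega_{0,t})$, then apply the computation leading to~\eqref{eq:bnd:y} with the D-solution constant taken as $KN(a)\leq K\overline{N}_R$. Your identification of the $K$ in $R'$ with the constant $L$ of Theorem~\ref{thm:control} is the intended reading.
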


Combining Corollary~\ref{cor:1} with \eqref{eq:bnd:y} and Proposition 10 in \cite{brault2},
we obtain a truncation argument. Thus, as we consider starting points in a bounded
set, we assume that stable almost flows are bounded without loss of generality.

\begin{corollary}[Truncation argument]
    \label{cor:2}
    Let $\phi$ be an almost flow satisfying \eqref{def:unbd:1}-\eqref{def:unbd:2}
    and $\psi$ be a stable almost flow.  We assume that $\phi=\psi$ on $\Omega(R')$
    with $R'$ defined above. Let us consider a D-solution $y$ for $\phi$ with $y_0=a$, $a\in\Omega(R)$
    with a constant $K=L\overline{N}_R$. Then $y$ 
    is a D-solution for $\psi$ and is unique.
\end{corollary}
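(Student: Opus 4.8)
The plan is to confine the whole graph of $y$ to the ball $\Omega(R')$ on which $\phi$ and $\psi$ are assumed to coincide, then to observe that the D-solution inequality only involves the values $\phi_{t,s}(y_s)$, which on $\Omega(R')$ equal $\psi_{t,s}(y_s)$; uniqueness is then inherited from the stability of $\psi$.

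First I would apply the a priori bound \eqref{eq:bnd:y} to $y$. By hypothesis $y$ is a D-solution for $\phi$ with $y_0=a\in\Omega(R)$ and constant $K=L\overline{N}_R$, and $\phi$ satisfies the growth conditions \eqref{def:unbd:1}--\eqref{def:unbd:2}, so \eqref{eq:bnd:y} gives $\normsup{y}\leq R+\delta_T\overline{N}_R+L\overline{N}_R\varpi(\omega_{0,T})$, which is precisely the value $R'$ fixed in Corollary~\ref{cor:1}. Hence $y_s\in\Omega(R')$ for every $s\in\TT$, and since $\phi=\psi$ on $\Omega(R')$ we have $\phi_{t,s}(y_s)=\psi_{t,s}(y_s)$ for all $(s,t)\in\rTT^2$. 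Therefore $\abs{y_t-\psi_{t,s}(y_s)}=\abs{y_t-\phi_{t,s}(y_s)}\leq K\varpi(\omega_{s,t})$, that is $y\in\cP[\psi,a,K]$, so $y$ is a D-solution for $\psi$.

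It then remains to show $\cP[\psi,a]=\{y\}$, and this is where the stability of $\psi$ enters: since $\psi\in\cSA$, Corollary~\ref{cor:5} provides Hypothesis~\ref{hyp:stability} once $T$ is small enough (shrinking if necessary the horizon fixed after Theorem~\ref{thm:control}), and Proposition~\ref{prop:convergence} then yields that $\cP[\psi,a]$ is a singleton; equivalently, the continuous counterpart \cite[Proposition~10]{brault2} of Proposition~\ref{prop:stability:schemes}, applied with both almost flows equal to $\psi$ and both starting points equal to $a$, forces any two D-solutions for $\psi$ from $a$ to coincide. Combined with the previous paragraph this gives $\cP[\psi,a]=\{y\}$. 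The manipulations are routine; the one point requiring care is the bookkeeping of the constant $K=L\overline{N}_R$, which is exactly what makes \eqref{eq:bnd:y} land inside $\Omega(R')$ — the very reason behind the definition of $R'$ in Corollary~\ref{cor:1}. The substantive input, more than an obstacle, is the uniqueness of D-solutions for stable almost flows established in Section~\ref{sec:stable-almost-flows}.
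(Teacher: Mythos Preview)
Your proposal is correct and follows exactly the route the paper indicates in the sentence preceding the corollary: use \eqref{eq:bnd:y} with $K=L\overline{N}_R$ to confine $y$ to $\Omega(R')$, identify $\phi_{t,s}(y_s)=\psi_{t,s}(y_s)$ there, and then invoke uniqueness for stable almost flows via \cite[Proposition~10]{brault2} (equivalently Corollary~\ref{cor:5} together with Proposition~\ref{prop:convergence}). The paper gives no further details beyond that one sentence, so there is nothing to add.
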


\subsection{Boundedness of solutions}

We now give some general results about uniform boundedness of the solutions. 
For this, we add a hypothesis on the structure of the almost flows.

\begin{hypothesis}
    \label{hyp:6}
Let $\Lambda:\RR_+\to\RR_+$ be a continuous, non-decreasing 
function such that 
\begin{equation*}
    \Lambda(0)=0\text{ and }\lim_{x\to 0}\Theta(x)=0
    \text{ with }
    \Theta(x)\eqdef\frac{\varpi(x)}{\Lambda(x)}.
\end{equation*}
\end{hypothesis}

Typically, we use $\Lambda(x)=x^{1/p}$ for some $p\geq 1$. This hypothesis
is satisfies when considering YDE and RDE.

For $y\in\cC([0,T],\uV)$, we define
\begin{equation*}
    \normL{y}\eqdef \sup_{(s,t)\in\rTT^2}\frac{\abs{y_t-y_s}}{\Lambda(\omega_{s,t})}.
\end{equation*}

\begin{notation}
    Let  $\cF_\Lambda(\delta)$ be the elements of $\cF[\delta]$ 
satisfying for some constants $C_\Lambda$ and $R_0$, 
$\phi_{t,s}=\id+\widehat{\phi}_{t,s}$, 
\begin{gather}
    \label{eq:cont:2}
    \Phi(R)\eqdef \sup_{\abs{a}\leq R}\normL{\widehat{\phi}_{t,s}(a)}\leq C_\Lambda R,\ \forall R\geq R_0.
\end{gather}
\end{notation}


\begin{proposition} 
    Let $\phi\in\cF_\Lambda[\delta]$. Let $y\in\cP[\phi,a,K]$. 
    When $C_\Lambda\Lambda(\omega_{0,T})\leq 1/2$, 
    \begin{equation}
	\label{eq:cont:3}
	\normsup{y}\leq 2\abs{a}+2K\varpi(\omega_{0,T})+2C_\Lambda R_0 \Lambda_{0,T}.
    \end{equation}
    In addition, 
    \begin{equation}
	\label{eq:cont:4}
	\normL{y}\leq C_\Lambda\max\Set{\normsup{y},R_0}+K\Theta(\omega_{0,T}).
    \end{equation}
\end{proposition}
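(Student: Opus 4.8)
The plan is to unwind the defining identity $\phi_{t,s}=\id+\widehat\phi_{t,s}$ inside the D-solution estimate and control the two resulting pieces separately: the increment $\widehat\phi_{t,s}(\cdot)$ via the $\Lambda$-growth bound \eqref{eq:cont:2}, and the remainder $y_t-\phi_{t,s}(y_s)$ via the defining inequality of $\cP[\phi,a,K]$. Two elementary facts will be used repeatedly: since $y\in\cC(\TT,\uV)$ and $\TT$ is compact, $\normsup{y}<+\infty$; and by super-additivity of $\omega$ one has $\omega_{s,t}\leq\omega_{0,T}$ for every $(s,t)\in\rTT^2$, hence $\varpi(\omega_{s,t})\leq\varpi(\omega_{0,T})$ and $\Lambda(\omega_{s,t})\leq\Lambda(\omega_{0,T})=\Lambda_{0,T}$ by monotonicity of $\varpi$ and $\Lambda$.

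For \eqref{eq:cont:3}, first I would fix $t\in\TT$. From $y_0=a$ and $\abs{y_t-\phi_{t,0}(a)}\leq K\varpi(\omega_{0,t})$, writing $\phi_{t,0}(a)=a+\widehat\phi_{t,0}(a)$ gives $\abs{y_t}\leq\abs{a}+\abs{\widehat\phi_{t,0}(a)}+K\varpi(\omega_{0,T})$. Applying \eqref{eq:cont:2} with $R\eqdef\max\{\abs{a},R_0\}\geq R_0$ yields $\abs{\widehat\phi_{t,0}(a)}\leq C_\Lambda R\,\Lambda(\omega_{0,t})\leq C_\Lambda(\abs{a}+R_0)\Lambda_{0,T}$, where I used $\max\{\abs a,R_0\}\leq\abs a+R_0$. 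Hence $\abs{y_t}\leq(1+C_\Lambda\Lambda_{0,T})\abs{a}+C_\Lambda R_0\Lambda_{0,T}+K\varpi(\omega_{0,T})$, and the hypothesis $C_\Lambda\Lambda(\omega_{0,T})\leq 1/2$ bounds the first coefficient by $3/2\leq2$; taking the supremum over $t$ produces \eqref{eq:cont:3}.

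For \eqref{eq:cont:4}, fix $(s,t)\in\rTT^2$ and again write $y_t-y_s=\widehat\phi_{t,s}(y_s)+\bigl(y_t-\phi_{t,s}(y_s)\bigr)$. Now $\abs{y_s}\leq\normsup{y}$, so with $R\eqdef\max\{\normsup{y},R_0\}\geq R_0$, estimate \eqref{eq:cont:2} gives $\abs{\widehat\phi_{t,s}(y_s)}\leq C_\Lambda R\,\Lambda(\omega_{s,t})$, while $\abs{y_t-\phi_{t,s}(y_s)}\leq K\varpi(\omega_{s,t})$. Dividing by $\Lambda(\omega_{s,t})$ (the case $\Lambda(\omega_{s,t})=0$, i.e. $y_t=y_s$, being trivial) and recalling $\varpi(\omega_{s,t})/\Lambda(\omega_{s,t})=\Theta(\omega_{s,t})$ gives $\abs{y_t-y_s}/\Lambda(\omega_{s,t})\leq C_\Lambda\max\{\normsup{y},R_0\}+K\,\Theta(\omega_{s,t})$. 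Bounding $\Theta(\omega_{s,t})\leq\Theta(\omega_{0,T})$ (using $\omega_{s,t}\leq\omega_{0,T}$ and monotonicity of $\Theta$ on $[0,\omega_{0,T}]$, which holds in the intended applications where $\Lambda(x)=x^{1/p}$ and $\Theta$ is a positive power of $x$) and taking the supremum over $(s,t)$ yields \eqref{eq:cont:4}.

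The argument is a routine estimate, so there is no genuine obstacle; the only points needing a little care are the constraint $R\geq R_0$ in \eqref{eq:cont:2} — which forces the replacement of $\abs a$ (resp. $\normsup{y}$) by $\max\{\cdot,R_0\}$ and is the source of the non-optimal factor $2$ in \eqref{eq:cont:3} — and the passage from $\Theta(\omega_{s,t})$ to $\Theta(\omega_{0,T})$, which relies on $\Theta$ being non-decreasing on $[0,\omega_{0,T}]$ rather than merely vanishing at $0$ as stated in Hypothesis~\ref{hyp:6}.
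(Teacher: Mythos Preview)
Your argument is correct and essentially parallel to the paper's; the only notable difference is in \eqref{eq:cont:3}. The paper first bounds the general increment $\abs{y_t-y_s}\leq K\varpi(\omega_{s,t})+C_\Lambda\max\{R_0,\normsup{y}\}\Lambda(\omega_{s,t})$ (using $\abs{y_s}\leq\normsup{y}$ inside \eqref{eq:cont:2}), then takes $s=0$ to obtain the self-referential inequality $\normsup{y}\leq\abs{a}+K\varpi(\omega_{0,T})+C_\Lambda(R_0+\normsup{y})\Lambda_{0,T}$, and uses $C_\Lambda\Lambda_{0,T}\leq 1/2$ to absorb the right-hand $\normsup{y}$; this is what produces the uniform factor $2$ on every term. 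You instead specialize to $s=0$ from the outset and bound $\abs{\widehat\phi_{t,0}(a)}$ directly in terms of $\abs{a}$, avoiding the self-reference entirely; your route is marginally more elementary and in fact yields a slightly sharper constant. For \eqref{eq:cont:4} the two proofs coincide, and your remark that the passage from $\Theta(\omega_{s,t})$ to $\Theta(\omega_{0,T})$ tacitly requires $\Theta$ to be non-decreasing on $[0,\omega_{0,T}]$ (not merely $\Theta(0^+)=0$) is a valid caveat that the paper leaves implicit.
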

\begin{proof} With \eqref{eq:1} and \eqref{eq:cont:2}, 
    \begin{equation*}
	\abs{y_t-y_s}\leq K\varpi(\omega_{s,t})+C_\Lambda \max\Set{R_0,\normsup{y}}
	\Lambda(\omega_{s,t}).
    \end{equation*}
    In particular, 
    \begin{equation*}
	\normsup{y}\leq \abs{a}+K\varpi(\omega_{0,T})+C_\Lambda(R_0+\normsup{y})\Lambda(\omega_{0,T}). 
    \end{equation*}
    For $C_\Lambda\Lambda(\omega_{0,T})\leq 1/2$, this leads
    to \eqref{eq:cont:3}. Since
    \begin{equation*}
	\abs{y_t-y_s-\widehat{\phi}_{t,s}(y_s)}\leq K\varpi(\omega_{s,t}), 
    \end{equation*}
    we obtain \eqref{eq:cont:4}.
\end{proof}

\subsection{Uniqueness of D-solutions associated to flows of class $\cO$}

\label{sec:uniqueness}

In our setting, we have not assumed that a flow is continuous. 
If a flow is locally of class $\cO$, then the associated D-solution is unique. 
We adapt the proof of \cite[Proposition 4.3]{davie05a} in our setting.

\begin{proposition}[Uniqueness of D-solutions associated to flows of class $\cO$]
    \label{prop:uniqueness}
    Let $\phi$ be a flow locally of class $\cO$ and $y$ be a D-solution 
    in $\cP[\phi,a,K]$. Then $y_t=\phi_{t,0}(a)$ for any $t\in\TT$
    and is then unique.
\end{proposition}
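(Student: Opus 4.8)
The plan is to exploit that the \emph{Euler curve} $z_t\eqdef\phi_{t,0}(a)$ is itself a D-solution, and to show that every other D-solution must coincide with it by a telescoping/sewing argument along a shrinking partition, in the spirit of \cite[Proposition~4.3]{davie05a}. First I would record the elementary consequences of $\phi$ being a flow: since $\normsup{\widehat\phi_{s,s}}\le\delta_0=0$ we have $\phi_{s,s}=\id$, and then $\diff\phi_{t,s,r}=0$ gives $\phi_{t,s}\circ\phi_{s,r}=\phi_{t,r}$ for $r\le s\le t$. Hence $z_0=a$, $z$ is continuous (as $\abs{z_t-z_s}=\abs{\widehat\phi_{t,s}(z_s)}\le\delta_{t-s}$) and bounded (by $\abs{a}+\delta_T$), and $z_t-\phi_{t,s}(z_s)=\phi_{t,0}(a)-\phi_{t,0}(a)=0$, so $z\in\cP[\phi,a,0]$; in fact $z$ is the numerical scheme of $\phi$ for \emph{every} partition.

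Next I would fix $y\in\cP[\phi,a,K]$. It is continuous on the compact $\TT$, hence bounded, so that $y$ and $z$, together with all the points $\phi_{s',s}(y_s)=y_{s'}-\bigl(y_{s'}-\phi_{s',s}(y_s)\bigr)$ (which differ from $y_{s'}$ by at most $K\varpi(\omega_{0,T})$ since $y$ is a D-solution and $\omega$ is super-additive), lie in a fixed closed ball $\Omega(R)$. Fix $t\in\TT$ and consider the curve $w_s\eqdef\phi_{t,s}(y_s)$ for $s\in[0,t]$. Using $\phi_{t,t}=\id$ and $y_0=a$ we get $w_t=y_t$ and $w_0=\phi_{t,0}(a)=z_t$, so it suffices to prove $w_t=w_0$. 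For $0\le s\le s'\le t$, the flow relation $\phi_{t,s}=\phi_{t,s'}\circ\phi_{s',s}$ yields
\begin{equation*}
    w_{s'}-w_s=\phi_{t,s'}(y_{s'})-\phi_{t,s'}(\phi_{s',s}(y_s))
    =R_{s,s'}+\bigl(\widehat\phi_{t,s'}(y_{s'})-\widehat\phi_{t,s'}(\phi_{s',s}(y_s))\bigr),
\end{equation*}
where $R_{s,s'}\eqdef y_{s'}-\phi_{s',s}(y_s)$ satisfies $\abs{R_{s,s'}}\le K\varpi(\omega_{s,s'})$ by \eqref{eq:1}. It is here that I would use that $\phi$ is locally of class $\cO$: for the flows at hand (cf.\ Example~\ref{ex:1}, and the flow built via \eqref{eq:DX-kunita} in the first proof of Theorem~\ref{thm:diffeo}) this amounts to $\widehat\phi_{t,s'}$ being Lipschitz on $\Omega(R)$ with a constant $\lambda$ that is \emph{uniform in} $(s',t)\in\rTT^2$, whence $\abs{w_{s'}-w_s}\le(1+\lambda)\abs{R_{s,s'}}\le(1+\lambda)K\varpi(\omega_{s,s'})$.

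Finally I would telescope over a partition $\pi=\Set{s_k}$ of $[0,t]$: since $w_t-w_0=\sum_k(w_{s_{k+1}}-w_{s_k})$, super-additivity of $\omega$ and Notation~\ref{not:3} give
\begin{equation*}
    \abs{y_t-z_t}=\abs{w_t-w_0}\le(1+\lambda)K\sum_k\varpi(\omega_{s_k,s_{k+1}})\le(1+\lambda)K\,\mu_{0,t}(\pi)\,\omega_{0,t},
\end{equation*}
which tends to $0$ as $\mesh{\pi}\to0$ by Remark~\ref{rem:3:bis}. Thus $y_t=\phi_{t,0}(a)$ for every $t\in\TT$, giving both the identification and uniqueness (any two D-solutions equal $\phi_{\cdot,0}(a)$). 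The main obstacle is precisely the step I flagged: the bare class $\cO$ estimate $\osc(\widehat\phi_{t,s'},L\varpi(\omega_{r,s'}))\le C\delta_T(1+L)\varpi(\omega_{r,t})$ carries the ``global'' factor $\varpi(\omega_{r,t})$ rather than $\varpi(\omega_{s_k,s_{k+1}})$, and a sum of such factors over a fine partition diverges; what makes the telescoping close is the \emph{uniform-in-time} Lipschitz character of the flow on bounded sets, which is what ``locally of class $\cO$'' is being used to supply here and which the flows in the applications genuinely possess.
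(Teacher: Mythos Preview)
Your telescoping argument is exactly the paper's: with $v_k\eqdef\phi_{t,t_k}(y_{t_k})$ (your $w$ evaluated at partition points), one writes $y_t-\phi_{t,0}(a)=v_n-v_0=\sum_k(v_{k+1}-v_k)$, uses the flow relation to get $v_{k+1}-v_k=\phi_{t,t_{k+1}}(y_{k+1})-\phi_{t,t_{k+1}}(\phi_{t_{k+1},t_k}(y_k))$, and bounds each increment by $d_k+\osc(\widehat\phi_{t,t_{k+1}},d_k)$ with $d_k\le K\varpi(\omega_{t_k,t_{k+1}})$.

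Where you go further than the paper is in the treatment of the oscillation term. The paper records it as $\delta_T\normO{\widehat\phi}(1+K)\varpi(\omega_{t_k,t_{k+1}})$ and then sums to $\sum_k\varpi(\omega_{t_k,t_{k+1}})\to0$. But the class-$\cO$ inequality \eqref{eq:defO} applied with $(r,s,\cdot)=(t_k,t_{k+1},t)$ literally delivers $\varpi(\omega_{t_k,t})$ on the right-hand side, and $\sum_k\varpi(\omega_{t_k,t})$ does \emph{not} vanish with the mesh --- precisely the obstacle you flag. Your resolution, replacing the bare class-$\cO$ bound by a uniform-in-time local Lipschitz bound on $\widehat\phi$, yields the summable estimate $(1+\lambda)K\varpi(\omega_{t_k,t_{k+1}})$ honestly. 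This is also how the sole application in the paper actually supplies the hypothesis: the first proof of Theorem~\ref{thm:diffeo} establishes that $a\mapsto\psi_{t,s}(a)-a$ is locally Lipschitz via \eqref{eq:DX-kunita} and invokes Example~\ref{ex:1} before calling this proposition. So your reading of ``locally of class $\cO$'' as ``locally Lipschitz, hence class $\cO$ by Example~\ref{ex:1}'' matches the paper's intended usage, and your argument is effectively the paper's with the delicate step made explicit and properly closed.
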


\begin{proof}
    As $y$ lives in a bounded set, we assume without loss of generality 
    that $\phi$ is globally of class $\cO$ as we use only local controls
    on the modulus of continuity of $\phi_{t,s}$.

    Let $\pi=\Set{t_i}_{i=0}^n$
    be a partition of $[0,t]$, $t\leq T$. Let us set $y_k\eqdef y_{t_k}$ and 
    $v_k=\phi_{t,t_k}(y_k)$. This way, $v_n=y_t$ while $v_0=\phi_{t,0}(a)$.
    Using a telescoping series, 
    \begin{equation*}
	v_{n}-v_0
	=\sum_{k=0}^{n-1}(v_{k+1}-v_k)
	=\sum_{k=0}^{n-1}\phi_{t,t_{k+1}}(y_{k+1})
	-\phi_{t,t_{k+1}}\circ\phi_{t_{k+1},t_k}(y_k).
    \end{equation*}
    Set $d_k\eqdef \abs{y_{k+1}-\phi_{t_{k+1},t_k}(y_k)}$. As $y\in \cP[\phi,a,K]$, $d_k\leq K\varpi(\omega_{k,k+1})$.
    As $\phi_{t,t_k}$ is of class $\cO$, 
    \begin{equation*}
	\abs{v_n-v_0}
	\leq \sum_{k=0}^{n-1}(d_k+\delta_T\normO{\widehat{\phi}}(1+K)\varpi(\omega_{t_k,t_{k+1}}))
    \end{equation*}
    so that 
    \begin{equation*}
	\abs{y_t-\phi_{t,s}(a)}=\abs{v_n-v_0} 
	\leq \Paren*{K+\normO{\widehat{\phi}}(1+K)\delta_T}\sum_{k=0}^{n-1}\varpi(\omega_{k,k+1})
	\xrightarrow[\mesh{\pi}\to 0]{}0 
    \end{equation*}
    since $\varpi(x)/x$ converges to $0$ as $x$ decreases to $0$.
\end{proof}

\bigskip
\noindent
\textbf{Acknowledgement.} The authors wish to thank Laure Coutin for her careful
reading and interesting discussions regarding the content of this article.
The first author thanks the Center for Mathematical Modeling, Conicyt fund AFB 170001.



\end{document}